	\numberwithin{equation}{section}
\def\e{\varepsilon}
\def\R{{\mathbb R}}
\def\C{{\mathbb C}}
\def\N{{\mathbb N}}
\def\Z{{\mathbb Z}}
\def\d{\partial}
\def\a{\alpha}
\def\be{\begin{equation}}
\def\ee{\end{equation}}
\def\op{{\rm op}}
\def\und{\underline}
\def\dx{{\rm d}x}
\def\dxi{{\rm d}\xi}
\begin{document}

\theoremstyle{plain}
\newtheorem{theo}{Theorem}

\theoremstyle{plain}
\newtheorem*{theo*}{Theorem}

\theoremstyle{plain}
\newtheorem{prop}{Proposition}[section]

\theoremstyle{plain} 
\newtheorem{defi}[prop]{Definition}

\theoremstyle{plain}
\newtheorem{coro}[prop]{Corollary}

\theoremstyle{plain}
\newtheorem{lemma}[prop]{Lemma}

\theoremstyle{definition}
\newtheorem*{condi}{Condition}

\theoremstyle{plain}
\newtheorem{notation}[prop]{Notation}

\theoremstyle{plain} 
\newtheorem{remark}[prop]{Remark}

\theoremstyle{plain} 
\newtheorem{hypo}[prop]{Assumption}

\title{On hyperbolicity and Gevrey well-posedness. \\ Part three: a model of weakly hyperbolic systems.}

\author{
Baptiste Morisse
\thanks{School of Mathematics, Cardiff University - \href{mailto:morisseb@cardiff.ac.uk}{morisseb@cardiff.ac.uk}.
The author is supported by the EPSRC grant "Quantitative Estimates in Spectral Theory and Their Complexity" (EP/N020154/1).
The author thanks his advisor Benjamin Texier and Jean-Francois Coulombel for all the remarks on this work.
The author thanks warmly the anonymous referee for all the detailed comments on this work.
}
}

\date{\today}

\maketitle

\begin{abstract}
	We consider a model of weakly hyperbolic systems of first-order, nonlinear PDEs.
	Weak hyperbolicity means here that the principal symbol of the system has a crossing of real-valued eigenvalues, and is not uniformly diagonalizable.
	We prove the well-posedness of the Cauchy problem in the Gevrey regularity for all Gevrey indices $\sigma$ in $(1/2,1)$.
	The proof is based on the construction of a suitable approximate symmetrizer of the principal symbol and an energy estimate in Gevrey spaces.
	We discuss both the generality of the assumption on the structure of the principal symbol and the sharpness of the lower bound of the Gevrey index.
\end{abstract}


\setcounter{tocdepth}{2}
\tableofcontents

\section{Introduction}

In this paper we prove an energy estimate for systems of the form
\be
	\label{4.intro.Cauchy}
	\d_{t} u =
	\begin{pmatrix}
		0 & 1 \\
		(t+ |x-x_0|^2)e(t,x) & 0
	\end{pmatrix}
	\d_xu + F(t,x,u)u
\ee

\noindent where $x\in\R$, $F(t,x,u)$ is nonlinear in $u$, and $e$ is a Gevrey function that is bounded away from zero and compactly supported around $(t,x) = (0,x_0)$.
This result translates by classical arguments into a \textit{local-in-time} well-posedness result in Gevrey spaces for the Cauchy problem for \eqref{4.intro.Cauchy}.

This result could also be extended into a general well-posedness result for a wider class of systems in several spatial dimensions:
\be
	\label{4.intro.local.dt}
	\d_tu = \sum_{1 \leq j \leq d} A_j(t,x) \d_{x_j}u + f(t,x,u)
\ee

\noindent where $x$ in $\R^{d}$, the $A_j$ are in $\R^{2 \times 2}$, $f$ in $\R^{2}$, the $A_j$ have some smoothness in time and are Gevrey regular in $x$, the nonlinearity $f$ is analytic in all variables, and the principal symbol $A = \sum_j A_j(t,x)\xi_j$ experiences a transition from hyperbolicity to ellipticity.
Precisely, in order to extend our result for \eqref{4.intro.Cauchy} into a well-posedness result for \eqref{4.intro.local.dt}, we assume
\begin{itemize}
	\item[$\bullet$] hyperbolicity of the principal symbol $A$, that is the spectrum of $A(t,x,\xi)$ is real.
	\item[$\bullet$] At a distinguished point $(0,x_0,\xi_0) \in\R\times\R^{d}\times\R^d$, the existence of a real and non semi-simple eigenvalue (semi-simplicity means simplicity as a zero of the minimal polynomial of $A(t,x,\xi)$).
	\item[$\bullet$] And finally we assume that $A$ transitions from hyperbolicity to ellipticity at $(0,x_0,\xi_0)$ for negative times.
	By transition from hyperbolicity to ellipticity we mean the phenomenon studied in \cite{morisse2016II}.
	Here this transition is \textit{not} degenerate, we will go back to this point in Section \ref{4.subsection.intro.generic}.
\end{itemize}

\noindent In a forthcoming version of this paper, we expound on these Assumptions, and handle the general case of weakly hyperbolic systems of the form \eqref{4.intro.local.dt}.

In the present version of this paper, we work exclusively with the model \eqref{4.intro.Cauchy}.
The fact that \eqref{4.intro.Cauchy} is one-dimensional ($x\in\R$) does not play any role in our analysis.

\medskip

Further simplifying into $e \equiv 1$, $F(u) = \begin{pmatrix} 0 & 0 \\ 0 & u_1 \end{pmatrix}$, we find the system
$$
	\d_t \begin{pmatrix}
		u_1 \\
		u_2
	\end{pmatrix}
	=
	\begin{pmatrix}
		0 & 1 \\
		t + x^2 & 0
	\end{pmatrix}
	\d_x
	\begin{pmatrix}
		u_1 \\
		u_2
	\end{pmatrix}
	+
	\begin{pmatrix}
		0 \\
		u_1^2
	\end{pmatrix} ,
$$
\noindent which reduces to the wave-like equation in $u_1\in\R$:
\be
	\label{4.intro.wave.nonlin}
	\d_t^2 u_1 = \d_x \left( (t+x^2) \d_x u_1 \right) + \d_{x}(u_1^2)
\ee

\noindent The wave operator in \eqref{4.intro.wave.nonlin} is singular at $(t,x) =(0,0)$, and elliptic for $t + x^2 < 0$ -- in particular for negative times.

\medskip

Our interest is in the Cauchy problem at $t=0$, for forward times.
Our present result has a double background: first in \textit{well-posedness for weakly hyperbolic systems}, a line of research popularized in particular by Colombini and collaborators \cite{colombini1983well}, \cite{colombini2007second} and \cite{bronshtein}, and in \textit{systems transitioning from hyperbolic to ellipticity}, a line of research initiated by Lerner, Morimoto and Xu in \cite{lerner2010instability}.


\subsection{Background: on weakly hyperbolic systems}


\subsubsection{The classical result of Colombini, Janelli and Spagnolo}

We consider here the following second-order, linear scalar equation
\be
	\label{4.intro.wave}
	\d_t^2 v = \d_x \left( a \d_x v \right)
\ee

\noindent with $a=a(t)$ a nonnegative, $C^k([0,T])$ function for some $k \geq 1$.
Such weakly hyperbolic, second-order scalar equations have long been studied by in Gevrey regularity.

A cornerstone of the domain is Colombini, Janelli and Spagnolo's paper \cite{colombini1983well}, which proved Gevrey well-posedness in the case of spatially-independent symbol $a(t)$.
The work of Colombini, Janelli and Spagnolo is based on an energy estimate, which uses the particular structure of the wave equation \eqref{4.intro.wave} and a lemma of real analysis which extends the classical Glaeser's inequality
\footnote{
	In fact, Lemma 1 in \cite{colombini1983well} is a weaker version of Glaeser inequality: Lemma 1 gives a bound on the $L^1$ norm of $a^{1/k}$, whereas the Glaeser inequality is pointwise for $a(t)^{1/2}$.
	},
namely that if $a(t)$ is a $C^k$ nonnegative function on $[0,T]$, then $a(t)^{1/k}$ is absolutely continuous on $[0,T]$ (see Lemma 1 in \cite{colombini1983well}, and \cite{glaeser1963racine} for Glaeser's inequality).

In the case when $a = a(t)$, equation \eqref{4.intro.wave} transforms into the scalar ODE
$$
	\d_t^2 w(t,\xi) = - a(t) |\xi|^2 w(t,\xi)
$$

\noindent thanks to the Fourier transform, and where we denote $w(t,\xi) = \widehat{v}(t,\xi) \in \C$.
As $a(t)$ is supposed to be only nonnegative (weak hyperbolicity), we introduce a small parameter $\e >0$ (later on $\e = \e(\xi)$) and the approximate energy
$$
	E_{\e}(t,\xi) = |\d_t w(t,\xi)|^2 + \left(a(t) + \e \right) |\xi|^2 |w(t,\xi)|^2
$$

\noindent whose time derivative is
$$
	\d_{t} E_{\e} = a'(t) |\xi|^2 |w|^2 + 2 \e |\xi|^2 {\rm Re}\, w \d_t w.
$$

\noindent Having in mind a G\r{a}rding-type inequality to fulfil an energy estimate, we bound the previous equality by
$$
	\d_{t} E_{\e} \leq |a'(t)| |\xi|^2 |w|^2 + \e^{1/2} |\xi| \, E_{\e}
$$

\noindent thanks to Cauchy-Schwarz's inequality.
To bound the term $|a'(t)| |\xi|^2 |w|^2$, we need here to link $|a'|$ to $a+\e$ in order to bound $|a'(t)| |\xi|^2 |w|^2$ by the term $\left(a(t) + \e \right) |\xi|^2 |w(t,\xi)|^2$ of the energy (up to a multiplicative constant).
As $\left( (a+\e)^{1/k} \right)' = \frac{1}{k} a'/(a + \e)^{1-1/k}$, we write
\begin{eqnarray*}
	|a'(t)| |\xi|^2 |w|^2
	& = &
	\left| \frac{a'}{(a+\e)^{1 - 1/k}} \right| \frac{1}{(a+\e)^{1/k}} \, (a + \e) |\xi|^2 |w|^2 \\
	& = &
	k \left| \left( (a+\e)^{1/k} \right)' \right| \frac{1}{(a+\e)^{1/k}} \, (a + \e) |\xi|^2 |w|^2.
\end{eqnarray*}

\noindent As $a$ is nonnegative, there holds
\begin{eqnarray*}
	\d_{t} E_{\e}
	& \leq &
	\left| \left( (a+\e)^{1/k} \right)' \right| \frac{1}{(a+\e)^{1/k}} \, E_{\e} + \e^{1/2} |\xi| \, E_{\e} \\
	& \leq &
	\left| \left( (a+\e)^{1/k} \right)' \right| \e^{-1/k} \, E_{\e} + \e^{1/2} |\xi| \, E_{\e}
\end{eqnarray*}

\noindent hence
\begin{eqnarray*}
	E_{\e}(t,\xi)
	& \lesssim &
	\exp \left( \e^{-1/k} \int_{0}^{t} \left| \left( (a+\e)^{1/k} \right)'(s) \right| ds + t \e^{1/2} |\xi|  \right) E_{\e}(0,\xi) \\
	& \lesssim &
	\exp \left( \e^{-1/k} |a|_{C^k}^{1/k} + T \e^{1/2} |\xi|  \right) E_{\e}(0,\xi)
\end{eqnarray*}

\noindent for all $t \leq T$ thanks to Lemma 1 in \cite{colombini1983well}.
In order to optimize the exponential term, we put $\e = |\xi|^{-2/(k+2)}$ to get finally
$$
	E_{\e}(t,\xi) \lesssim e^{c |\xi|^{2/(k+2)}} E_{\e}(0,\xi)
$$

\noindent for some constant $c>0$.

Thanks to this (pointwise in frequency) energy estimate, the authors of \cite{colombini1983well} proved that the Cauchy problem associated to \eqref{4.intro.wave} is well-posed in Gevrey spaces $\mathcal{G}^{\sigma}_{c}$ (see Definition 2.2 in \cite{morisse2016j}) with $\sigma > 2/(k+2)$, where $k$ is the regularity of the coefficient of equation \eqref{4.intro.wave}.
Note that, as the regularity of $a$ grows, the range of Gevrey indices for which well-posedness holds grows as such.


\subsubsection{Beyond the 1983 article of Colombini, Janelli and Spagnolo}

The work of \cite{colombini1983well} has been followed and extended notably by Colombini and Nishitani in \cite{colombini2007second} and by Colombini, Nishitani and Rauch in \cite{bronshtein}.

In \cite{colombini2007second}, Colombini and Nishitani study the case when $a$ depends also in $x$, that is, $a(t,x)$ is assumed to be nonnegative and in $C^2([0,T], G^{s}_{R})$ where $s$ stands as usual for $1/\sigma$ (see Definition 2.1 in \cite{morisse2016j} for Gevrey spaces defined from the spatial viewpoint, and Proposition 2.1 therein for its link with $\mathcal{G}^{\sigma}_{\tau}$).
Note that, as it is made explicit in Theorem 1.3 in the paper of Colombini and Nishitani, it is assumed that $a(t,x)$ is in fact nonnegative in $[-\delta,T+\delta]$ for some $\delta>0$.
This additional assumption on $a$ is crucial in the course of the proof of \cite{colombini2007second}.
Indeed, in order to extend the energy-based study in \cite{colombini1983well}, the authors of \cite{colombini2007second} use a pseudo-differential calculus.
In the context of symbols, Lemma 1 in \cite{colombini1983well} is no longer helpful, as it leads to an $L^1$ estimate of the time derivative of $a$; instead, a pointwise inequality in $(t,x)$ is needed, hence the use of Glaeser's inequality.
For Glaeser's inequality to hold in a compact subspace of $\R\times\R^{d}$, the nonnegativity condition on $a$ has to hold on a larger subspace containing the compact, see Appendix \ref{4.subsection.glaeser}.
Well-posedness is then proved for any $1 \leq s < 2$ -- that is for any $1/2 < \sigma \leq 1$ thanks to Proposition 2.1 in \cite{morisse2016j} -- extending the work of \cite{colombini1983well}.

More recently, Colombini and Nishitani have pursued their line of research in \cite{colombini2017}.
The authors are interested in wave equations with coefficients with independent variables $t$ and $x$.
Using an exponential weight and the same metric in the phase space as we use in this paper, the authors of \cite{colombini2017} prove again well-posedness for $1/2 < \sigma \leq 1$.

In \cite{garetto2018hyperbolic}, Garetto, J\"ah and Rhuzansky prove well-posedness in anisotropic Sobolev spaces for a large class of linear systems of first-order PDEs.
The authors consider triangular principal symbols and source terms whose order are sufficiently low compared to the dimension of the systems.
Their method is based on representation of solutions of triangular systems.

\medskip

The work of Colombini, Nishitani and Rauch in \cite{bronshtein} explores a different method.
Generic weakly hyperbolic systems \eqref{4.intro.Cauchy} are considered, not only second-order scalar equations \eqref{4.intro.wave} as in \cite{colombini1983well} or \cite{colombini2007second}, i.e. the principal symbol $A(t,x,\xi)$ is there a $N\times N$ matrix with real spectrum but with potential eigenvalue crossings.
To study such general symbols, the authors introduce a \textit{block size barometer} $\theta =m-1$, which roughly measures the extent to which $A(t,x,\xi)$ can be smoothly block diagonalized by blocks of size $m$.
For smoothly diagonalizable symbols, $\theta = 0$ ; on the other hand, $\theta = N-1$ if the symbol is not block diagonalizable at all - which is typically our framework, for $N=2$.
In order to get a general result on well-posedness in Gevrey spaces, regardless of the spectral details of the principal symbol of \eqref{4.intro.Cauchy}, a suitable Lyapunov symmetrizer is studied.
In exchange for a general statement, the range of Gevrey indices for which well-posedness holds is quite reduced, and depends on $\theta$.
Precisely, well-posedness for \eqref{4.intro.Cauchy} is proved for any
$$
	\sigma \geq \min \left\{ \frac{1+6\theta}{2+6\theta} , \frac{2+4\theta}{3+4\theta} \right\} .
$$

\noindent Note that in our framework there holds $\theta = 1$ which leads the lower bound $6/7$ for the Gevrey index.


\subsection{Background: on systems transitioning away from hyperbolicity}


The question of the instability of systems transitioning away from hyperbolicity has been first raised in \cite{lerner2010instability}, extending the work \cite{metivier2005remarks} on initially elliptic systems.
In \cite{lerner2010instability} quasilinear \textit{scalar} equations are considered, with analytic coefficients.
It is assumed that these equations experience a transition from initial hyperbolicity to ellipticity for positive times.
For such equations, it is proved in \cite{lerner2010instability} that the Cauchy problem with initial analytic data is strongly unstable with respect to $C^{\infty}$ perturbation.

A similar instability result is established in \cite{lerner2015onset}, in which quasilinear \textit{systems} with smooth coefficients are considered.
In various cases of transitions from initial hyperbolicity to ellipticity, the Cauchy problem in Sobolev spaces is proved to be unstable, in the sense of Hadamard.
That is, hypothetical flow of the system fails to be H\"older from Sobolev spaces to $L^2$.
The article \cite{lu2016resonances} explores a similar theme in the context of high-frequency solutions of singularly perturbed symmetric hyperbolic systems.

In a previous work \cite{morisse2016II}, we considered first order quasi-linear system \eqref{4.intro.Cauchy} experiencing a transition from hyperbolicity to ellipticity.
A typical example of symbols which falls into the class studied in Section 2.3 in \cite{morisse2016II} is
\be
	\label{4.local.intro.A}
	A(t,x,\xi) =
	\begin{pmatrix}
		0 & 1 \\
		-(t -t_{\star}(x,\xi)) & 0
	\end{pmatrix}
\ee

\noindent in a neighborhood of $(0,0,\xi_0) \in \R_t\times\R^{d}_x\times\R^{d}_{\xi}$, with
\be
	\label{4.intro.t*}
	t_{\star}(x,\xi)= |x|^4 + |\xi - \xi_0|^2
\ee

\noindent In such a case, we proved in Theorem 2.11 in \cite{morisse2016II} that \eqref{4.intro.Cauchy} is not well-posed in Gevrey spaces for $\sigma\in(0,2/13)$.
As explained in Section 2,3 therein, the term $|x|^4$ corresponds to a degenerate time transition.
As we see in Figure \ref{4.pic.diff_x2_x4}, the hyperbolic domain $\left\{ (t,x)\in[0,T] \times B_r(x_0) \, : \, t \leq |x|^4 \right\}$ for $|x|^4$ is thinner than the hyperbolic domain $\left\{ (t,x)\in[0,T] \times B_r(x_0) \, : \, t \leq |x|^2 \right\}$ for $|x|^2$.
This observation allowed us to treat the term $|x|^4$ as a remainder term.
Having treated the case of \textit{degenerate} transitions in our paper \cite{morisse2016II}, we now wish to handle \textit{generic} transitions.
These involve, as explained in \cite{lerner2015onset}, time-transition functions of the form $t_{\star}(x) = x^2$, in one spatial dimension, and a Jordan block for the principal symbol, that is \eqref{4.intro.Cauchy} with $t_{\star}(x) = x^2$.


\begin{figure}[!h]
   \begin{minipage}[c]{.46\linewidth}
      \includegraphics[width=180mm]{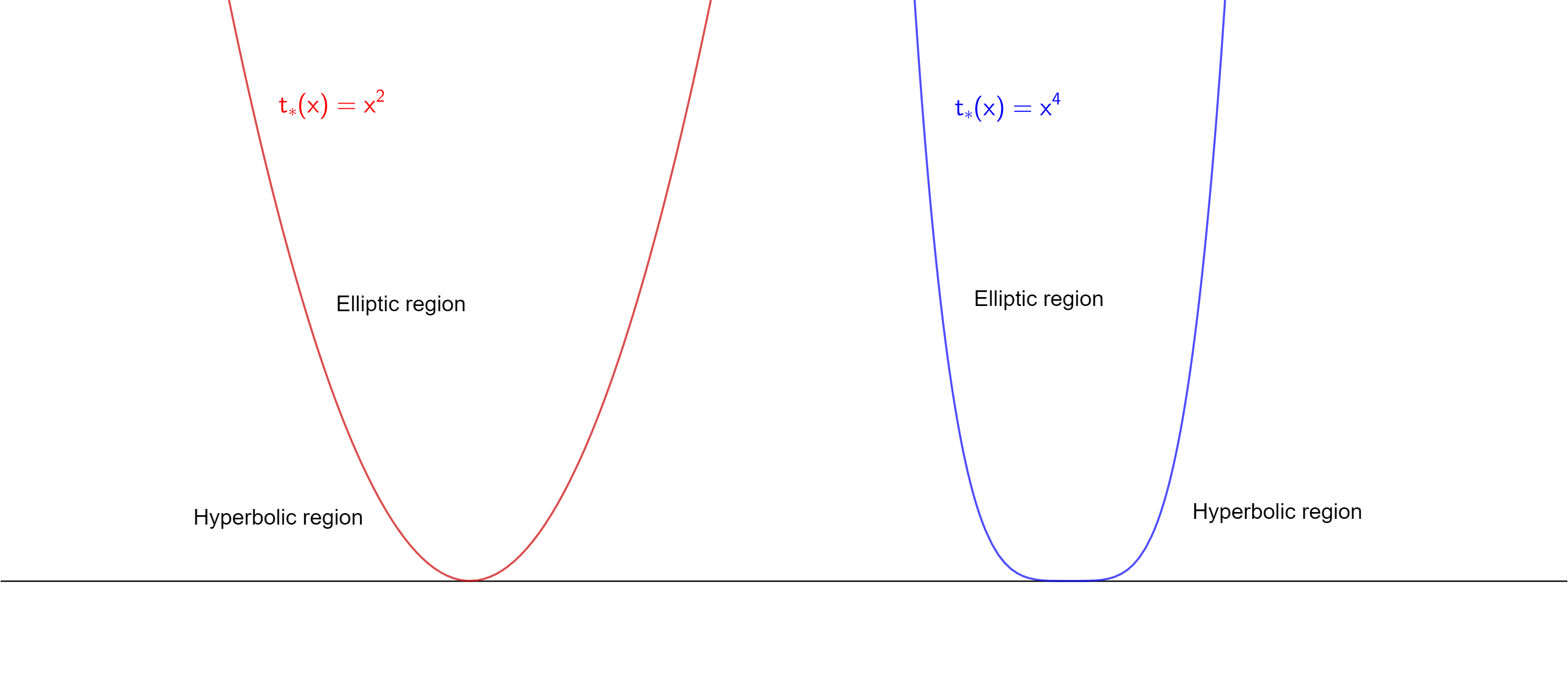}
   \end{minipage} \hfill
	\caption{Comparison between degenerate $x^4$ and non-degenerate $x^2$}\label{4.pic.diff_x2_x4}
\end{figure}


\subsection{Generic time transitions}
\label{4.subsection.intro.generic}


The proof of \cite{morisse2016II} in the case $t_{\star}(x) = |x|^2$ fails essentially due to the size of the hyperbolic domain $\left\{ (t,x)\in[0,T] \times B_r(x_0) \, : \, t \leq |x|^2 \right\}$ in the setting developed therein.
The term $|x|^2$ may not be considered as a remainder term.

Thus in order to prove ill-posedness in the generic configuration, we have to handle the not so small hyperbolic region under the transition curve.
This means proving a form of \textit{well-posedness} for $t < t_{\star}$.
At $t = t_{\star}$ the unstable modes are turned on and the analysis of \cite{morisse2016II} should apply. For the analysis of \cite{morisse2016II} to go through, we must find suitable analytic data $\left(h_{\e} \right)_{\e>0}$ such that the Cauchy problem at $t = t_{\star}$ is ill-posed (with the difficulty that $t_{\star}$ is a
function of $x$ in $1$d and of $(x,\xi)$ in multi-d).

The outstanding question is then to find suitable \textit{initial} (at $t = 0$, for all $x$) data which give rise to the suitable unstable data $ h_{\e}(x)$ at $t = t_{\star}(x)$.
In other words, we want to solve the \textit{backward-in-time} Cauchy problem, in the hyperbolic zone, from $t = t_{\star}(x)$ to $t = 0$.
This motivates the form of the principal symbol under consideration here, as we describe in the next Section.

\subsection{Current result}


As mentioned above, generic transitions from hyperbolic to ellipticity involve in one spatial dimension principal symbols of the form \eqref{4.local.intro.A} with $t_{\star}(x) = x^2$.
In order to study these transitions, we must understand the backward-in-time Cauchy problem for such operators.
This motivates the form of our principal symbol in \eqref{4.intro.Cauchy}.
The function $e$ is assumed to be bounded away from zero and Gevrey (see Assumption \ref{4.hypo.A}).
Under this assumption, we prove an energy estimate for solutions with compact support with regularity $\mathcal{G}^{\sigma}_{\tau}$ for any $\sigma \geq 1/2$ and $\tau>0$ small.
This is Theorem \ref{4.theo}.

The proof relies on the construction of a suitable symmetrizer $S = \op( \text{diag}(1 , b) )$ with symbol $ b(t,x,\xi) = (t + x^2 + \langle \xi \rangle^{-c})^{-1/2} $ and a Gevrey energy estimate.
An important observation is that the symbol $b$ does not belong to a standard class of symbols.
Indeed, $b(0,0,\xi) = \langle \xi \rangle^{c/2}$ whereas $b(t,x,\xi) \in S^{0}_{1,0} $ when $t \geq \und{t}$ and $|x| >r$.
To reconcile both point of views, we make use of class of symbols defined with respect to a metric of the phase space, as described in \cite{lerner2011metrics}.
In Lemma \ref{4.lemma.b.symbol}, we prove that $b \in S(b,g)$ where the time-dependent metric $g$ is defined in \eqref{4.def.metric}.
This metric has already been used in {\rm \cite{colombini2007second}} and \cite{colombini2017}.
Our paper relies also on our paper \cite{morisse2016j} which contains our work on pseudo-differential operators with symbols which are Gevrey regular in the spatial variable.

%
%

We note that the symmetrizer $S$ is anisotropic, as it stresses out more (Sobolev) regularity for the second component $u_2$ than for the first one $u_1$.
This observation is closely related to \cite{garetto2018hyperbolic}, in which well-posedness is proved in anisotropic Sobolev spaces for a certain class of weakly hyperbolic systems.
In \cite{garetto2018hyperbolic}, the additional assumption on the order of the force terms compared to the dimension of the systems can be read as $c=2$ in our settings, where $c$ is the order of the perturbation.
In the present paper, without such a strong assumption on the source term $F$ we cannot expect to reach $c=2$ but rather $c \in (0,1]$.
See also Remark \ref{remark.hypo.nonlinear}, which gives a hint on how to reach $c=4/3$.

\begin{remark}
	Our result is outside the range of the article {\rm \cite{colombini2007second}}.
	The symbol $a(t,x)$, which is in our case similar to $t + x^2$, does not satisfy Glaeser's inequality for negative times. This result is also an improvement of the result given in {\rm \cite{bronshtein}}, as we attain in our paper the lower bound $1/2$ for the Gevrey indices, compared to the lower bound $6/7$ as described above.
	The main difference is that, in our paper, we take care of the spectral details of the principal symbol, as we assume it is a $2$ by $2$ matrix, with a specific crossing of eigenvalues.
\end{remark}

\section{Main assumptions and results}


We consider the Cauchy problem \eqref{4.intro.Cauchy} which we rewrite in a more compact way as
\be
	\label{4.system}
	\d_t u =
	\begin{pmatrix}
		0 & 1 \\
		a(t,x) & 0 
	\end{pmatrix}
	\d_x u + F(t,x,u)u
\ee

\noindent where $x$ is in $\R$, $u$ in $\R^{2}$ and $F(t,x,u)$ is a $2\times 2$ matrix. 
We describe first our assumptions on the regularity and the structure of both $a$ and $F$.



\begin{hypo}[Structure and regularity for $a$]
	\label{4.hypo.A}
	We assume that 
	$$
		a(t,x) = \left(t+|x-x_0|^2\right)e(t,x)
	$$
	
	\noindent where $e(t,x)$ has compact support $[0,T']\times B_{r'}(x_0)$ for some $T'>0$ and $r'>0$. 
	Besides, $e$ is in $C^2([0,T'],G^{1/\sigma}_R)$, that is there is $C>0$ such that
	$$
		\left| \d_{x}^{\a}e(t,x) \right| \leq C R^{|\a|} \a!^{1/\sigma} \quad , \quad \forall \, \a \in \N \; , \; \forall \, (t,x) \in [0,T']\times B_{r'}(x_0) .
	$$
	
	\noindent There is also $0 < T < T'$ and $ 0 < r < r'$ such that
	\be
		\label{4.ineq.e}
		1/2 \leq e(t,x) \leq 2 \quad , \quad \forall \,(t,x) \in [0,T]\times B_{r}(x_0) .
	\ee
\end{hypo}

\noindent We denote
\be
	\label{4.def.und.tau}
	\und{\tau} = R^{-\sigma}/\sigma
\ee

\noindent the Gevrey regularity of $a$, in the Fourier point of view (see Definition 2.2 and Proposition 2.1 in \cite{morisse2016j} with $s = 1/\sigma$).
Concerning the force term $F(t,x,u)$, we make the following

\begin{hypo}[Regularity for $F$]
	\label{hypo.reg.F}
	The function $F$ is entire in $u$ in a neighborhood of $u=0$ and there holds
	\be
		F(t,x,u) = \sum_{k \in \N^2} F_k(t,x) u^k 
	\ee	
	
	\noindent where coefficients $F_k(t,x)$ are in $\mathcal{G}^{\sigma}_{\tau_0}$, uniformly in $t$ and $k \in \N^2$. 
\end{hypo}

\noindent As spaces $\mathcal{G}^{\sigma}_{\tau}$ are algebra, if $u$ is controlled in $\mathcal{G}^{\sigma}_{\tau}$ the same holds for all powers $u^k$.
We could lighten the assumption of analyticity in the variable $u$ for $F$ by assuming some Gevrey regularity.
This would only add technicalities, which we choose to avoid at this stage.

\medskip

The main result of our paper is an energy estimate in Gevrey space ${\rm \mathcal{G}}^{\sigma}_{\tau}$ for any $\sigma \geq 1/2$ and for small $\tau$. 
The lower Gevrey index $1/2$ is the expected lower bound for the Gevrey regularity in the presence of a source term $F(t,x,u)u$.
With additional assumption on $F$, the same analysis may lead to a lower bound $\sigma \geq 1/3$ (see Remark \ref{remark.hypo.nonlinear}). 
To obtain such a result, we define a suitable symmetriser for $A$, introducing first the symbol
\be
	\label{4.def.b}
	b(t,x,\xi) = \left( a(t,x) + \langle \xi \rangle^{-c} \right)^{-1/2}
\ee

\noindent for some $c\in(0,2]$ and denoting 
\be
	\langle \xi \rangle = \left( 1 + |\xi|^2 \right)^{1/2} 
\ee
 
Defining 
\be 
	\label{def.symbol.S}
	S(t,x,\xi)
	=
	\begin{pmatrix}
		1 & 0 \\
		0 & b(t,x,\xi)
	\end{pmatrix}
\ee 

\noindent one key point is that 
\be
	S^2
	\begin{pmatrix}
		0 & 1 \\
		a + \langle \cdot \rangle^{-c}
	\end{pmatrix}
	=
	\begin{pmatrix}
		0 & 1 \\
		1 & 0
	\end{pmatrix}
\ee 

\noindent is real symmetric.
The perturbation by a lower order term implies working in Gevrey regularity, but in exchange allows for an approximate symmetrization of the principal symbol $A$.
This is closely related to the work of Colombini and M\'etivier \cite{colombini_metivier} for uniformly diagonalizable symbols, depending only on time.

Section \ref{4.subsection.suitable.class} will be devoted to prove that $b$ is in the class of symbols $S(b,g)$, defined in \eqref{4.defi.class.symbols} and the metric $g$ defined in \eqref{4.def.metric}. 
This is done principally thanks to the non-negativity of $a$ and Glaeser's inequality (see Lemma \ref{4.lemma.glaeser.und.a} and Section \ref{4.subsection.glaeser} below). 

In all the following, we denote
\be
	\label{4.def.D}
	D = {\rm op}\left( \langle \xi \rangle \right) \quad \text{and} \quad D^{\sigma} = {\rm op}\left( \langle \xi \rangle^{\sigma} \right) .
\ee

\noindent Let $\sigma \in (0,1)$, $\tau > 0$ and $u$ in ${\rm \mathcal{G}}^{\sigma}_{\tau}$. 
We introduce the Gevrey energy
\be
	\label{4.def.gevrey.energy.sys}
	E(\tau,u(t)) = \frac{1}{2} \left| \op(S)e^{\tau D^{\sigma}}u(t) \right|^2_{L^2} .
\ee

Thanks to the result of sharp finite speed propagation for \eqref{4.system} under assumptions of "constant outside a compact set", the result of \cite{colombini2010sharp} can be used. 
We look for solutions with compact support in $(t,x)$ included in $[0,T] \times B_{r}(x_0)$, which can be done if the initial datum $u_0$ has sufficiently small compact support (with respect to $T$ and the finite speed propagation of \eqref{4.system}). 
The existence of such solutions with regularity in $\mathcal{G}^{\sigma}_{\tau}$ is assured by our main result and by standard results on local well-posedness in Gevrey for such systems.
\begin{theo}
	\label{4.theo}
	For any $\tau_0 < \und{\tau}$ with $\und{\tau}$ defined in \eqref{4.def.und.tau}, there is ${\bm \tau} >0$ such that 
	$$
		E\left(\tau_0 - {\bm \tau} t , u(t) \right) \lesssim E(\tau_0, u(0) ) \quad , \quad \forall \,t \in \left[ 0,\min \left( T, \frac{\tau_0}{{\bm \tau}} \right) \right] .
	$$
\end{theo}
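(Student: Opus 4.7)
The plan is to differentiate the Gevrey energy $E(\tau(t), u(t))$ along the decreasing path $\tau(t) := \tau_0 - {\bm \tau} t$, derive a differential inequality of the form $\tfrac{d}{dt} E \leq C \cdot E$ for ${\bm \tau}$ sufficiently large, and conclude by Gronwall on $[0, \min(T, \tau_0 / {\bm \tau})]$. Set $U(t) := e^{\tau(t) D^{\sigma}} u(t)$, so that $2 E(\tau(t), u(t)) = |\op(S) U(t)|^{2}_{L^{2}}$ and
\[
\d_t U \;=\; - {\bm \tau} D^{\sigma} U \;+\; e^{\tau(t) D^{\sigma}} \left[ A(t,x) \d_x u + F(t,x,u) u \right] .
\]
First I would commute $e^{\tau(t) D^{\sigma}}$ past the multiplications by $A(t,x)$ and by $F(t,x,u)$ using the Gevrey pseudo-differential calculus of \cite{morisse2016j}. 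Since by Assumption \ref{4.hypo.A} and Assumption \ref{hypo.reg.F} the symbols $A$ and $F$ are Gevrey of index $\sigma$ with radius $R$, the commutators $[e^{\tau D^{\sigma}}, A]$ and $[e^{\tau D^{\sigma}}, F]$ produce $L^{2}$ remainders controlled by $E$ itself as long as $\tau(t) < \und{\tau}$, which is exactly what the assumption $\tau_{0} < \und{\tau}$ and a small ${\bm \tau}$ guarantee. The nonlinear content of $F$ is then handled via the algebra property of $\mathcal{G}^{\sigma}_{\tau(t)}$ applied term-by-term to the entire expansion of $F$ in $u$.

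Differentiating $|\op(S) U|^{2}_{L^{2}}$ in $t$ splits the result into three main contributions: a convective term ${\rm Re} \langle \op(S^{2} A) \d_x U, U \rangle$, a dissipative term $- {\bm \tau} \langle D^{\sigma} \op(S) U, \op(S) U \rangle$ coming from $\tau'(t) = - {\bm \tau}$, and the symmetriser-derivative term ${\rm Re} \langle \op(\d_t S) U, \op(S) U \rangle$, plus contributions from $F$ and the previous commutator remainders. The convective term is controlled through the exact algebraic identity
\[
S^{2} A \;=\; \begin{pmatrix} 0 & 1 \\ 1 & 0 \end{pmatrix} \;-\; \langle \xi \rangle^{-c} b^{2} \begin{pmatrix} 0 & 0 \\ 1 & 0 \end{pmatrix} ,
\]
whose principal part is real symmetric with $x$-independent entries: integration by parts in $x$ eliminates it entirely, while the perturbation, paired with $\d_x$, defines a symbol of order at most $\langle \xi \rangle^{1-c}$, absorbed by the dissipative term whenever $\sigma \geq 1 - c$ (in particular compatible with $\sigma \geq 1/2$ and $c \in (0, 2]$). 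The reordering of $\op(S)^{\ast} \op(S) A$ into $\op(S^{2} A)$ via the pseudo-differential calculus on the metric $g$ from Section \ref{4.subsection.suitable.class} produces further remainders lying in the class $S(b, g)$, treated in the same manner.

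The main obstacle is the term ${\rm Re} \langle \op(\d_t S) U, \op(S) U \rangle$, since $\d_t b = - \tfrac{1}{2} (\d_t a) b^{3}$ contains the unbounded factor $b^{3}$, while $\d_t a$ does \emph{not} satisfy any Glaeser-type inequality with respect to $a$ alone --- for example $\d_t a(0, x_{0}) = e(0, x_{0}) > 0$ while $a(0, x_{0}) = 0$. To deal with this I would exploit the regularising cutoff $\langle \xi \rangle^{-c}$ built into the definition of $b$, together with the symbol-class identity $b \in S(b, g)$ of Lemma \ref{4.lemma.b.symbol} and the Glaeser bound of Lemma \ref{4.lemma.glaeser.und.a}, to estimate the $L^{2}$ norm of $\op(\d_t S) U$ by a constant multiple of $|D^{\sigma/2} \op(S) U|_{L^{2}}$. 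Cauchy--Schwarz and Young's inequality then allow this contribution to be absorbed by the dissipative term $-{\bm \tau} \langle D^{\sigma} \op(S) U, \op(S) U \rangle$ once ${\bm \tau}$ is chosen sufficiently large (independently of $\tau_{0}$). Collecting everything produces the differential inequality $\tfrac{d}{dt} E(\tau(t), u(t)) \leq C \, E(\tau(t), u(t))$ on $[0, \min(T, \tau_{0}/{\bm \tau})]$, and integration yields the announced bound.
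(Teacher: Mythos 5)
Your overall architecture -- differentiate $E(\tau(t),u(t))$, extract a dissipative term $-{\bm\tau}\langle D^{\sigma}\op(S)U,\op(S)U\rangle$, absorb everything else into it, and take ${\bm\tau}$ large -- is the same as the paper's. Your treatment of the convective term is also essentially the paper's: the identity $S^2 A_\natural = \begin{pmatrix}0&1\\1&0\end{pmatrix}$ makes the leading part vanish, and the $\langle\xi\rangle^{-c}b^{2}$ perturbation is a lower-order remainder. However, you state that this remainder is absorbed "whenever $\sigma\geq 1-c$"; re-deriving it, after peeling off $\op(b)^{-1}$ to reach $D^{\sigma/2}\op(b)v_2$, the operator to bound is $D^{-\sigma/2}\op(b\langle\cdot\rangle^{-c})\d_x D^{-\sigma/2}$ with weight $b\langle\cdot\rangle^{1-c-\sigma}\leq\langle\cdot\rangle^{1-c/2-\sigma}$, so the constraint is the stronger $\sigma\geq 1-c/2$ (this is exactly (\ref{constraint.error}) in the paper). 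This matters at the end, because it forces $c\geq 2(1-\sigma)$.

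The genuine gap is in the treatment of ${\rm E_3}={\rm Re}\langle\op(\d_t S)U,\op(S)U\rangle$, which you propose to absorb purely by Cauchy--Schwarz/Young: bound $\op(\d_t b)v_2$ by $D^{\sigma/2}\op(b)v_2$ and push into the dissipative term. Unwinding what that needs, since $\d_t b=-\tfrac12(\d_t a)b^3$, you would require $\d_t a\,b^2\langle\cdot\rangle^{-\sigma}$ bounded (writing the bilinear form as $\langle D^{-\sigma/2}\op(\d_t b)\op(b)^{-1}D^{-\sigma/2}\cdot,\cdot\rangle$). Since $\d_t a(0,x_0)=e(0,x_0)>0$ while $a(0,x_0)=0$, the factor $\d_t a$ contributes nothing, and $b^2\leq\langle\cdot\rangle^{c}$ is sharp, so you need $c\leq\sigma$. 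Combined with the corrected $E_2$ constraint $c\geq 2(1-\sigma)$, this yields only $\sigma\geq 2/3$, \emph{not} the claimed $\sigma\geq 1/2$. The paper avoids this exactly at the point you flag as the "main obstacle": using the \emph{positivity} $\d_t a>0$ (from Assumption~\ref{4.hypo.A}), it writes $\d_t b = -\tfrac12(\sqrt{\d_t a}\,b)^2 b$, so that in Weyl quantization the leading part of $\op(\d_t b)$ contributes $-\tfrac12\bigl|\op(\sqrt{\d_t a}\,b)\op(b)v_2\bigr|^2\leq 0$, i.e.\ a term with a \emph{favorable sign} that can simply be discarded (see \eqref{subprincipal.dtb}). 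Only the symbol-calculus remainders of order $b^5\langle\cdot\rangle^{-c-1}$ and $b^3\lambda^{-2}$ survive, and these lead to the much weaker constraints \eqref{constraint.dtb}--\eqref{constraint.dtb.remainder}, compatible with $\sigma\geq 1/2$. You correctly identify that no Glaeser inequality controls $\d_t a$ by $a$, but you miss that positivity of $\d_t a$ is the substitute mechanism; without it your absorption step caps out above $\sigma=2/3$ and the theorem as stated is not reached.
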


Section \ref{4.section.energy} is devoted to the proof of Theorem \ref{4.theo}.
Here are some remarks concerning our result:

\begin{itemize}
	\item Concerning the case $x \in \R^d$ for $d \geq 2$, that is for 
	$$
		\d_t u = \sum_{1 \leq j \leq d} A_j(t,x) \d_{x_j}u + F(t,x,u)u
	$$
	\noindent our method described in the present paper may also apply. 
	Considering the principal symbol $A(t,x,\xi) = \sum_{1 \leq j \leq d} A_j(t,x) i\xi_j$, the analogous of \eqref{4.system} is for the principal symbol to have normal form
	$$
		A(t,x,\xi) = 
		i|\xi|
		\begin{pmatrix}
		0 & 1 \\
		a(t,x,\omega) & 0 
	\end{pmatrix}
	\quad \text{with} \quad 
	\omega = \frac{\xi}{|\xi|} 
	$$
	
	\noindent with $a(t,x,\omega) \sim t + x^2$.
	
	\item For higher dimensions for the system, our method may also apply for normal forms
	$$
		\begin{pmatrix}
			0 & 1 & 0 & \hdotsfor{3} \\
			0 & 0 & 1 & 0 & \hdotsfor{2} \\
			\hdotsfor{6} \\
			a_1 & a_2 & \hdotsfor{2} & a_{N-1} & 0
		\end{pmatrix}
	$$
	\noindent We would need to adapt consequently our symmetrizer $S$ as 
	$$
		\begin{pmatrix}
			0 & 1 & 0 & \hdotsfor{3} \\
			0 & 0 & 1 & 0 & \hdotsfor{2} \\
			\hdotsfor{6} \\
			a_1 + \langle \xi \rangle^{-c} & a_2 & \hdotsfor{2} & a_{N-1} & 0
		\end{pmatrix}
	$$
	
	\noindent and the expected lower bound for the Gevrey index is then $1 - 1/N$ (again, in the presence of source term $Fu$).
	
	\item The general case $a(t,x) \geq 0$, with $C^2$ regularity, could be treated by our method, even in the case where $\d_t a(0,x_0) =0$.
	The symmetrizer $S$ could be defined in the same way.
	The symbol $b$ is still in $S(b,g)$, thanks to Glaeser inequality.
	The only main difference would be the care of the term $\d_tS$ in the energy estimate.
	In the case $\d_t a(0,x_0) =0$, a time Glaeser inequality holds which allows to control $\d_ta$ in terms of $b^{-1}$. 
\end{itemize}

Finally we note that the main case of fully quasilinear systems $A = A(u)$, such as Euler equations with Van der Waals laws or other physical meaningful systems, are out of reach of our current analysis and understanding.
The study of such systems should be a main topic in the future of our research.
\section{Proof of the energy estimate}
\label{4.section.energy}


In order to study \eqref{4.system} in Gevrey spaces, a classical approach is to introduce a Gevrey radius $\tau(t)$ which decreases linearly in time. Let $\tau_0 < \und{\tau}$.
We define
\be
	\label{4.def.tau.t}
	\tau(t) = \tau_0 - {\bm \tau} t.
\ee

\noindent with ${\bm \tau}>0$ to be determined in the course of the proof.


\subsection{Time derivative of the energy}
\label{4.subsection.derivative.energy}



We compute here the time derivative of the energy $E$ defined in \eqref{4.def.gevrey.energy.sys}.
The energy $E$ depends on time through the symbol $b$, the Gevrey radius $\tau(t)$ and $u$.

We introduce
\be
	\label{4.def.v}
	v(t) = e^{ \tau(t) D^{\sigma} } u(t)
\ee

\noindent with $\tau(t)$ defined in \eqref{4.def.tau.t} and $D^{\sigma}$ in \eqref{4.def.D}.
There holds
$$
	\d_{t} v(t) = - {\bm \tau} D^{\sigma} v(t) + e^{ \tau(t) D^{\sigma} } \d_{t}u(t).
$$

\noindent As $u$ solves system \eqref{4.system}, $v$ solves
$$
	\d_t v = - {\bm \tau} D^{\sigma} v + e^{ \tau D^{\sigma} } \left(A \d_{x} u + F(u)u \right).
$$

\noindent In order to work with $v$, we use Notation (3.1) in \cite{morisse2016j} for the conjugation operator of a Gevrey function, writing
\be
	\label{dtv}
	 \d_t v = - {\bm \tau} D^{\sigma} v +\left(A^{(\tau)} \d_{x} v + F(u)^{(\tau)}v \right)
\ee

\noindent where the coefficients of matrices $A^{(\tau)}$ and $F^{(\tau)}$ are the Gevrey conjugated coefficients of $A$ and $F(u)$.

We compute the time derivative of the energy $E(\tau(t),u(t))$ defined in \eqref{4.def.gevrey.energy.sys}.
Using notation $v$ defined in \eqref{4.def.v}, the energy is
$$
	E(t,u(t)) = \frac{1}{2} \left| \op(S) v \right|^2_{L^2}
$$

\noindent where the symbol $S$ is defined in \eqref{def.symbol.S}.
Denoting here $ \langle \cdot \rangle $ the $L^2(\R^{d})$ scalar product, we compute
\begin{eqnarray*}
	\d_t E & = & {\rm Re}\, \langle \op(S) \d_t v , \op(S) v \rangle + {\rm Re}\, \langle \op(\d_t S) v , \op(S) v \rangle
\end{eqnarray*}

\noindent Using \eqref{dtv} there holds
\be
	\d_{t} E = {}-{\bm \tau} {\rm E_{1}} + {\rm E_{2}} + {\rm E_{3}} + {\rm E_{4}}
\ee

\noindent where
\be
	\label{4.FI}
	{\rm E_{1}} = {\rm Re}\, \langle \op(S) D^{\sigma} v, \op(S) v \rangle
\ee

\noindent is the time-derivative of the Gevrey weight ;
\be
	\label{4.FII}
	{\rm E_{2}} = {\rm Re}\, \langle \op(S) A^{(\tau)} \d_{x} v, \op(S) v \rangle
\ee

\noindent are linear terms in the equations ;
\be
	\label{4.FIII}
	{\rm E_{3}} = {\rm Re}\, \langle \op(\d_{t}S) v, \op(S) v \rangle
\ee

\noindent is the time-derivative of the symmetrizer ;
\be
	\label{4.FIV}
	{\rm E_{4}} = {\rm Re}\, \langle \op(S) F(u)^{(\tau)} v , \op(S) v \rangle
\ee

\noindent are the non-linear terms in the equation.
The term ${\rm E_{1}}$ is of higher order than the energy, thanks to the $D^{\sigma}$ term coming from the time derivative of the Gevrey weight.
The minus sign in front of ${\rm E_{1}}$ is crucial in order to control the remainder terms ${\rm E_{2}}$, ${\rm E_{3}}$ and ${\rm E_{4}}$.
We focus now on each of those terms.

\subsubsection{The term ${\rm E_1}$}
\label{subsubsection.E1}

The term ${\rm E_1}$ controls more than the $L^2$-norm of $\op(S)v$.
Component-wise, we get
$$
	{\rm E_{1}} = {\rm Re}\, \langle D^{\sigma} v_1, v_1 \rangle + {\rm Re}\, \langle \op(b) D^{\sigma} v_2, \op(b) v_2 \rangle .
$$

\noindent The anisotropy of the symetriser $S$ reads in this equality.
The first term is simply equals to $ \left|D^{\sigma/2} v_1 \right|^2_{L^2}$: we have a control of the $H^{\sigma/2}$-norm of $v_1$.

Concerning the component $v_2$, we compute
\begin{eqnarray*}
	{\rm Re}\, \langle \op(b) D^{\sigma} v_2, \op(b) v_2 \rangle
	& = &
	{\rm Re}\, \langle D^{\sigma} \op(b) v_2, \op(b) v_2 \rangle + {\rm Re}\, \langle \left[ \op(b) , D^{\sigma} \right] v_2, \op(b) v_2 \rangle \\
	& = &
	\left|D^{\sigma/2} \op(b) v_2 \right|^2_{L^2} + {\rm Re}\, \langle \left[ \op(b) , D^{\sigma} \right] v_2, \op(b) v_2 \rangle .
\end{eqnarray*}

\noindent Focusing on the commutator term on the right-hand side, we write
\begin{eqnarray*}
	{\rm Re}\, \langle \left[ \op(b) , D^{\sigma} \right] v_2, \op(b) v_2 \rangle
	& = &
	{\rm Re}\, \langle D^{-\sigma/2}\left[ \op(b) , D^{\sigma} \right] \op(b)^{-1} D^{-\sigma/2} \, D^{\sigma/2} \op(b) v_2, D^{\sigma/2} \op(b) v_2 \rangle \\
	& = &
	{\rm Re}\, \langle \op \left( S \left( \lambda^{-2} , g \right) \right) \, D^{\sigma/2} \op(b) v_2, D^{\sigma/2} \op(b) v_2 \rangle
\end{eqnarray*}

\noindent thanks to Lemma \ref{4.lemma.composition}.
As $c \in (0,2]$, there holds $\lambda^{-2} \leq 1$ hence operators $\op \left( S \left( \lambda^{-2} , g \right) \right)$ are bounded in $L^2$ using Lemma \ref{4.lemma.action}.
We conclude thus by
\be
	\label{E1.v2}
	 {\rm E_1} \sim \left|D^{\sigma/2} v_1 \right|^2_{L^2} + \left|D^{\sigma/2} \op(b) v_2 \right|^2_{L^2} .
\ee

\subsubsection{The term ${\rm E_{2}}$}

The crucial cancellations take place here.
They rely on our choice of $b$ defined in \eqref{4.def.b}.
As $a$ is in ${\rm \mathcal{G}}^{\sigma}_{\und{\tau}}$ with $\und{\tau}$ defined in \eqref{4.def.und.tau} and by the results of Section 5 in \cite{morisse2016j} (see also \cite{bronshtein}), there is a symbol $\tilde{a}$ in $S^{0}_{1,0}$ such that
\be
	\label{4.def.tilde.a}
	a^{(\tau)} = {\rm op}(\tilde{a})
\ee

\noindent for all $\tau = \tau(t)$, as $\tau(t) \leq \tau_0 < \und{\tau}$ by definition \eqref{4.def.tau.t}.
Denoting
$$
	\tilde{A}
	=
	\begin{pmatrix}
		0 & 1 \\
		\tilde{a} & 0
	\end{pmatrix}
$$

\noindent we may then write
\begin{eqnarray*}
	A^{(\tau)}
	& = &
	\op(\widetilde{A}) \\
	& = & A + \op\left( \widetilde{A} - A \right) \\
	& = & \op\left( A_{\natural} \right)
	-
	\begin{pmatrix}
		0 & 0 \\
		D^{-c} & 0
	\end{pmatrix}
	+ \op\left( \widetilde{A} - A \right)
\end{eqnarray*}
\noindent where $D$ is defined in \eqref{4.def.D}.

We make use of this decomposition to write
\be
	\label{E2.first}
	{\rm E_{2}} = {\rm Re}\, \langle \op(S) \op\left( A_{\natural} \right) \d_{x} v, \op(S)v \rangle + R_2
\ee

\noindent where $R_2$ comprises remainder terms:
$$
	R_2 = -{\rm Re}\, \langle \op(S) \left(
	\begin{pmatrix}
		0 & 0 \\
		D^{-c} & 0
	\end{pmatrix}
	 + \op\left( \widetilde{A} - A \right) \right) \d_{x} v, \op(S)v \rangle.
$$

\noindent As $S$ is defined as a microlocal symmetriser for $A_{\natural}$, we write
$$
	{\rm E_{2}} = {\rm Re}\, \langle \op(S)^2 \op(A_{\natural}) \d_{x} v, v \rangle + R_2 ,
$$

\noindent as $\op(S)^* = \op(S)$ in Weyl quantization for diagonal matrices with real symbols.

Next, applying equality \eqref{composition.crochet} of Lemma \ref{4.lemma.composition} on composition of operators, there holds
$$
	{\rm op}(b)^2 = \op\left( b^2\right) + \op\left( S\left(b^2\lambda^{-2} , g \right) \right)
$$

\noindent as $\{ b , b \} = 0$.
Thus
$$
	{\rm E_{2}} = {\rm Re}\, \langle \op \left( S^2 A_{\natural} \right) \d_{x} v, v \rangle + \widetilde{R}_2
$$

\noindent where
$$
	\widetilde{R}_2 = R_2 + {\rm Re}\, \langle \left( \op\left( S^2 A_{\natural} \right) - \op(S)^2 \op(A_{\natural}) \right) \d_{x} v, v \rangle .
$$

By definition of $S$, the leading term of ${\rm E_2}$ cancels and there holds finally
\begin{eqnarray}
	{\rm E_{2}}
	& = &
	- {\rm Re}\, \langle \op(S)
	\begin{pmatrix}
		0 & 0 \\
		D^{-c} & 0
	\end{pmatrix}
	\d_{x} v, \op(S) v \rangle \label{4.FII.1} \\
	& &
	\quad + {\rm Re}\, \langle \left( \op\left( S^2 A_{\natural} \right) - \op(S)^2 \op(A_{\natural}) \right) \d_x v, v \rangle \label{4.FII.2} \\
	& &
	\quad + {\rm Re}\, \langle \op(S) \op(\widetilde{A} - A) \d_{x} v, \op(S)v \rangle. \label{4.FII.3}
\end{eqnarray}

\subsubsection{The term ${\rm E_{3}}$}

We first note that
$$
	\d_t S
	=
	\begin{pmatrix}
		0 & 0 \\
		0 & \d_t b
	\end{pmatrix}
$$

\noindent and that $ \d_{t} b = -\frac{1}{2} \d_{t} a \, b^3 $.
Thanks to Assumption \ref{4.hypo.A}, function $\d_{t}a(t,x)$ is positive.
We may then write
\be
	\label{4.development.dtb}
	\d_{t} b = -\frac{1}{2} \left( \sqrt{\d_{t} a} \, b \right)^{2} b
\ee

\noindent and aim at getting a G\r{a}rding-type estimate.
As $\sqrt{\d_t a}$ depends only on $(t,x)$ variables, it is in $ S(1,g) $, hence $ \sqrt{\d_{t} a} \, b  $ is in $S\left(b,g\right)$ by Lemma \ref{4.lemma.algebra}.
First, applying equality \eqref{composition.crochet} of Lemma \ref{4.lemma.composition} and as $\{ \sqrt{\d_{t} a} \, b , \sqrt{\d_{t} a} \, b \} = 0$, there holds
$$
	\op \left( \left( \sqrt{\d_{t} a} \, b \right)^{2} \right) = \op \left( \sqrt{\d_{t} a} \, b \right)^{2} + \op \left( S\left( b^2 \lambda^{-2} ,g \right) \right) .
$$

\noindent Second, using again Lemma \ref{4.lemma.composition} there holds
$$
	\op\left(\d_t b\right) = -\frac{1}{2} \op\left( \left( \sqrt{\d_{t} a} \, b \right)^{2} \right) \op(b) + \op \left( i \{ \d_t a \,b^2 , b \} \right) + \op\left( S\left( b^3\lambda^{-2} , g \right) \right).
$$

\noindent The subprincipal symbol $i \{ \d_t a \,b^2 , b \}$ is \textit{a priori} in $S(b^3 \lambda^{-1},g)$.
A careful computation gives however
\begin{eqnarray*}
	i \{ \d_t a \,b^2 , b \}
	& = &
	i(\d_{tx}a) b^2 \, \d_{\xi}b \\
	& = &
	-\frac{i}{2} (\d_{tx}a) b^5 \d_{\xi} \langle \cdot \rangle^{-c}
\end{eqnarray*}

\noindent which is in $S(b^5 \langle \cdot \rangle^{-c-1} , g)$.
We conclude then that
\be
	\label{subprincipal.dtb}
	\op\left(\d_t b\right)
	=
	-\frac{1}{2} \op \left( \left( \sqrt{\d_{t} a} \, b \right) \right)^{2} \op(b) + \op \left( S(b^5 \langle \cdot \rangle^{-c-1} , g) \right) + \op\left( S\left( b^3\lambda^{-2} , g \right) \right).
\ee

\noindent Note that both terms are not comparable, as $c$ may go from $0$ to $2$.

This implies
\begin{eqnarray*}
	{\rm E_{3}}
	& = &
	- \frac{1}{2} {\rm Re}\, \langle \left( \op\left( \sqrt{\d_{t} a} \, b \right) \right)^{2} \op(b) v_2, \op(b) v_2 \rangle \\
	& &
	\quad + {\rm Re}\, \langle \op \left( S(b^5 \langle \cdot \rangle^{-c-1} , g) \right)  v_2, \op(b) v_2 \rangle + {\rm Re}\, \langle \op\left( S\left( b^3\lambda^{-2} , g \right) \right)  v_2, \op(b) v_2 \rangle .
\end{eqnarray*}

\noindent The first term in the above right-hand side satisfies
\begin{eqnarray*}
	{\rm Re}\, \langle \left( \op\left( \sqrt{\d_{t} a} \, b \right) \right)^{2} \op(b) v_2, \op(b) v_2 \rangle
	& = &
	\left| \op\left( \sqrt{\d_{t} a} \, b \right) \op(b) v_2 \right|^2 \\
	& \geq &
	0 .
\end{eqnarray*}

\noindent Thus
\be
	{\rm E_{3}} \leq {\rm Re}\, \langle \op \left( S(b^5 \langle \cdot \rangle^{-c-1} , g) \right)  v_2, \op(b) v_2 \rangle + {\rm Re}\, \langle \op\left( S\left( b^3\lambda^{-2} , g \right) \right)  v_2, \op(b) v_2 \rangle . \label{4.FIII.1}
\ee

\noindent Note that the term ${\rm E_{3}}$ does not depend on $v_1$ as the symbol $S$ is anisotropic.

\section{Pseudo-differential tools}


This Section aims to remind a few tools of pseudo-differential calculus and of symbols associated to a general metric of the phase space.
We start first by study the symbol $b$, which leads naturally to a specific metric $g$ that encodes the specific dynamics of our system.
We then define classes of symbols $S(M,g)$, and the properties of pseudo-differential operators associated to such symbols.


\subsection{Study of symbol $b$}
\label{4.subsection.suitable.class}


We define the symbol
\be
	\label{def.a.natural}
	a_{\natural} = a_{\natural}(t,x,\xi) = a(t,x) + \langle \xi \rangle^{-c}
\ee

\noindent where the additional term $\langle \xi \rangle^{-c} $ makes the symbol $a_{\natural}$ positive.
This is a standard approach when dealing with weakly hyperbolic equations, see \cite{colombini1983well}.
Thanks to this notation, we may write the symbol $b$ defined by \eqref{4.def.b} as $b = a_{\natural}^{-1/2} $.

\begin{lemma}[Bounds for $b$]
	The symbol $b$ satisfies the upper bound
	\be
		\label{4.upper.bound.b}
		b(t,x,\xi) \leq \langle \xi \rangle^{c/2} \quad , \quad \forall \, (t,x,\xi) \in[0,T]\times B_{r}(x_0) \times \R
	\ee
	\noindent and is bounded from below
	\be
		\label{4.lower.bound.b}
		b(t,x,\xi) \gtrsim 1 \quad , \quad \forall \, (t,x,\xi) \in[0,T]\times B_{r}(x_0) \times \R
	\ee
\end{lemma}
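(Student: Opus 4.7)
The plan is straightforward: both bounds follow from direct algebraic manipulation of the definition $b = a_\natural^{-1/2} = (a + \langle \xi \rangle^{-c})^{-1/2}$, using the two-sided control on $e$ provided by Assumption \ref{4.hypo.A} on the region $[0,T] \times B_r(x_0)$.

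For the upper bound \eqref{4.upper.bound.b}, I would first observe that on $[0,T] \times B_r(x_0)$ one has $t \geq 0$ and $|x-x_0|^2 \geq 0$, while $e(t,x) \geq 1/2 > 0$ by \eqref{4.ineq.e}. Consequently $a(t,x) \geq 0$, so that
$$
a_\natural(t,x,\xi) = a(t,x) + \langle \xi \rangle^{-c} \geq \langle \xi \rangle^{-c}.
$$
Taking inverse square roots immediately yields $b(t,x,\xi) \leq \langle \xi \rangle^{c/2}$.

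For the lower bound \eqref{4.lower.bound.b}, I would bound $a_\natural$ from above. On $[0,T] \times B_r(x_0)$, one has $t + |x-x_0|^2 \leq T + r^2$ and $e(t,x) \leq 2$, so $a(t,x) \leq 2(T + r^2)$. Since also $\langle \xi \rangle^{-c} \leq 1$ for $c \geq 0$, we obtain the uniform bound
$$
a_\natural(t,x,\xi) \leq 2(T+r^2) + 1 =: C_0.
$$
Taking inverse square roots gives $b(t,x,\xi) \geq C_0^{-1/2}$, which is exactly the claimed lower bound up to a multiplicative constant depending only on $T$ and $r$.

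There is no real obstacle here: the whole statement is a sanity check that the regularizing perturbation $\langle \xi \rangle^{-c}$ controls $b$ from above by $\langle \xi \rangle^{c/2}$, and that the compact time-space window on which $a$ is bounded controls $b$ from below by a constant. Both steps use only the sign and the compactness assumptions of Assumption \ref{4.hypo.A}, and not the Gevrey regularity of $e$, which will only be needed later when proving that $b \in S(b,g)$.
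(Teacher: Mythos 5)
Your proof is correct and follows essentially the same route as the paper: the upper bound is immediate from $a \geq 0$, and the lower bound uses that $a_\natural$ is bounded above on the compact region, which the paper phrases as $b \geq (\sup a + 1)^{-1/2}$. You merely make the bound $\sup a \leq 2(T+r^2)$ explicit, which is a harmless elaboration.
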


\begin{proof}
	The proof of the upper bound \eqref{4.upper.bound.b} is immediate as $a$ is non negative.
	For the lower bound, there holds
	$$
		b(t,x,\xi) \geq \left( \sup a + 1 \right)^{-1/2}
	$$

	\noindent where the $\sup$ of $a$ is over $[0,T]\times B_{r}(x_0)$.

\end{proof}

In order to compute carefully some estimates on the derivatives of $b$, we prove first a local Glaeser inequality for $a$, as it is non-negative locally around $x=x_0$.

\begin{lemma}[Glaeser inequality for $a$]
	\label{4.lemma.glaeser.und.a}
	Under Assumption {\rm \ref{4.hypo.A}}, there is a neighborhood $[0,T]\times B_{r}(x_0)$ of $(0,x_0)\in\R_{t}\times\R_{x}$ and a constant $C_{T,r}>0$ for which there holds
	\be
		\left( \d_{x}a(t,x) \right)^2 \leq C_{T,r} \,a(t,x) \quad , \quad \forall \,(t,x)\in [0,T]\times B_{r}(x_0) .
	\ee
\end{lemma}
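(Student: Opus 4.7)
The plan is to deduce this as a local version of Glaeser's classical inequality $(f')^2 \leq 2 \|f''\|_\infty f$ for non-negative $C^2$ functions, applied to $a$ at fixed $t$. First I would exploit the fact that $a(t,x) = (t + |x-x_0|^2) e(t,x)$ is non-negative on a slightly larger neighborhood than $[0,T] \times B_r(x_0)$. Indeed, by continuity of $e$ and the pointwise bound $1/2 \leq e \leq 2$ on $[0,T] \times B_r(x_0)$ furnished by Assumption \ref{4.hypo.A}, one can choose enlarged parameters $T < T^\sharp < T'$ and $r < r^\sharp < r'$ such that $e \geq 1/4$ (say) on $[0,T^\sharp] \times B_{r^\sharp}(x_0)$. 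Combined with $t \geq 0$ and $|x-x_0|^2 \geq 0$, this yields $a \geq 0$ on the enlarged box. The Gevrey regularity of $e$ in Assumption \ref{4.hypo.A} moreover ensures that $\partial_x^2 a$ is bounded on this compact enlargement by some constant $M > 0$, uniformly in $t$.

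Next I would run the classical Glaeser argument pointwise. For fixed $(t,x) \in [0,T] \times B_r(x_0)$, Taylor's formula gives
\be
    a(t, x+h) = a(t,x) + h\, \partial_x a(t,x) + \frac{h^2}{2}\, \partial_x^2 a(t, x + \theta h)
\ee
for some $\theta \in (0,1)$. Taking $h := -\partial_x a(t,x)/M$ and using $\partial_x^2 a \leq M$ produces
\be
    a(t, x+h) \leq a(t,x) - \frac{1}{2M} \left( \partial_x a(t,x) \right)^2.
\ee
Combined with the non-negativity $a(t, x+h) \geq 0$ established in the first step, this rearranges into $\left( \partial_x a(t,x) \right)^2 \leq 2M\, a(t,x)$, which is the claim with $C_{T,r} := 2M$.

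The main obstacle will be making sure that the probe point $x + h$ actually lies in $B_{r^\sharp}(x_0)$, since otherwise the non-negativity used in the bound on $a(t,x+h)$ is not available. Since $\partial_x a$ is continuous on the compact enlargement, it is bounded there by some $M_1 > 0$, so $|h| \leq M_1/M$ uniformly in $(t,x)$. Shrinking $r$ further (if necessary) so that $r + M_1/M \leq r^\sharp$ guarantees $x + h \in B_{r^\sharp}(x_0)$ and the Taylor argument goes through. In effect, the lemma is free to pick $r$ and $T$ as small as needed, in exchange for the constant $C_{T,r}$ depending on these parameters; this is consistent with the statement, which merely asserts the existence of such a neighborhood.
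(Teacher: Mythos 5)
Your proof is correct but takes a genuinely different route from the paper's. The paper exploits the explicit product structure $a=(t+|x-x_0|^2)e$ to factor $(\d_x a)^2 = a\cdot K(t,x)$ with $K = \tfrac{4(x-x_0)^2}{t+(x-x_0)^2}\,e + (t+(x-x_0)^2)\,\tfrac{(\d_x e)^2}{e}+4(x-x_0)\,\d_x e$, and then observes that $K$ is locally bounded on $[0,T]\times B_r(x_0)$ because $t\geq 0$ forces $\tfrac{(x-x_0)^2}{t+(x-x_0)^2}\leq 1$, while $1/2\leq e\leq 2$ and $\d_x e$ is bounded; this is a short algebraic verification using the specific form of $a$. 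You instead run the classical Taylor-expansion proof of Glaeser's inequality, which requires non-negativity of $a$ on an enlargement. Your route is more general -- it would give a local Glaeser inequality for any nonnegative $C^2$ function with bounded second derivative -- at the cost of a slightly longer argument; it is in fact closer in spirit to the paper's Lemma \ref{4.lemma.glaeser.local}, which the paper includes but explicitly chooses not to use here.

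One imprecision needs patching: the step ``shrinking $r$ further (if necessary) so that $r+M_1/M\leq r^\sharp$'' is possible only when $M_1/M < r^\sharp$, and since both $M_1$ and $M$ were determined by the already-chosen enlargement $[0,T^\sharp]\times B_{r^\sharp}(x_0)$, this quotient might well exceed $r^\sharp$, in which case no positive $r$ works. The fix is trivial: the Taylor estimate $a(t,x+h)\leq a(t,x)-\tfrac{1}{2M}\left(\d_x a(t,x)\right)^2$ holds for any constant $M\geq\left|\d_x^2a\right|_{L^\infty}$ on the enlargement, not just the sharp supremum, so you are free to increase $M$ (rather than only shrink $r$) until $M_1/M<r^\sharp-r$; the only cost is a larger $C_{T,r}=2M$, which the statement permits.
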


\noindent The proof is postponed to Appendix \ref{4.subsection.glaeser}.
The following Lemma gives precise estimates on the derivatives of $b$.

\begin{lemma}[Derivatives of the symbol $b$]
	\label{4.lemma.derivatives.b}
	There is a bounded sequence of constants $C_{\a,\beta}>0$ for which there holds
	\be
		\label{4.derivatives.b}
		|\d_{x}^{\a} \d_{\xi}^{\beta} b(t,x,\xi)| \leq C_{\a,\beta} \widetilde{R}^{|\a|} 2^{|\beta|} \, \a!^{1/\sigma} \beta! \, b(t,x,\xi)\,b(t,x,\xi)^{|\a|} \,\langle\xi\rangle^{-|\beta|} \quad , \quad \forall \,(\a,\beta) \in \N \times \N
	\ee

	\noindent for all $(t,x)$ in $[0,T]\times B_{r}(x_0)$ and $\xi$ in $\R$, and where $\widetilde{R} = R(1 + |a|_{1/\sigma,R}|)> R$.

\end{lemma}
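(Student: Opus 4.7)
The plan is to apply a Faa di Bruno expansion to $b = f(a_{\natural})$ with $f(y) = y^{-1/2}$ (analytic away from zero), treating the $x$- and $\xi$-derivatives in parallel and relying on two compensation mechanisms: Lemma \ref{4.lemma.glaeser.und.a} for first-order $x$-derivatives of $a$, and the trivial inequality $b^{2}\langle\xi\rangle^{-c}\leq 1$ coming from $a_{\natural} \geq \langle\xi\rangle^{-c}$ for $\xi$-derivatives.

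First I would treat pure $\partial_\xi^\beta b$. Since $a$ is independent of $\xi$, $\partial_\xi^j a_{\natural} = \partial_\xi^j \langle\xi\rangle^{-c}$ satisfies the analytic bound $|\partial_\xi^j \langle\xi\rangle^{-c}| \lesssim C^j j! \, \langle\xi\rangle^{-c-j}$. Plugging into Faa di Bruno, each term of order $k$ carries a factor $|f^{(k)}(a_{\natural})| \lesssim k! \, b^{2k+1}$ multiplying a product $\prod (\partial_\xi^{j} \langle\xi\rangle^{-c})^{k_j}$ with $\sum k_j = k$, $\sum j k_j = \beta$. The $2k$ extra powers of $b$ are absorbed by the $\langle\xi\rangle^{-ck}$ produced by the product, leaving exactly $b \cdot \langle\xi\rangle^{-\beta}$ up to constants; the combinatorial sum over partitions is $O(2^\beta \beta!)$ by the classical composition-of-analytic-functions estimate.

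Next I would handle pure $\partial_x^\alpha b$. The Faa di Bruno formula gives
\[
\partial_x^\alpha b = \sum_{k=1}^\alpha f^{(k)}(a_{\natural}) \sum_{\substack{\sum k_j = k \\ \sum j k_j = \alpha}} \frac{\alpha!}{\prod k_j!} \prod_{j\geq 1} \Bigl(\frac{\partial_x^j a}{j!}\Bigr)^{k_j}.
\]
I would estimate the factors $\partial_x^j a$ in two different ways depending on $j$: for $j=1$ by the Glaeser inequality of Lemma \ref{4.lemma.glaeser.und.a}, namely $|\partial_x a| \lesssim a^{1/2} \leq a_{\natural}^{1/2} = b^{-1}$, and for $j\geq 2$ by the Gevrey regularity of Assumption \ref{4.hypo.A}, $|\partial_x^j a| \leq C R^j (j!)^{1/\sigma}$. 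Each summand then carries a $b$-power equal to $2k+1 - k_1 = 1 + k_1 + 2 k_2 + 2 k_3 + \cdots \leq 1 + \sum j k_j = 1 + \alpha$; since $b\gtrsim 1$, this is at most $b^{\alpha+1}$. The combinatorial weight is controlled via $\prod (j!)^{k_j} \prod k_j! \leq \alpha!$ (number of set partitions) combined with $\prod (j!)^{k_j/\sigma} \leq (\alpha!)^{1/\sigma}$ (from $(j!)^{k_j} \leq (jk_j)!$ and super-multiplicativity of the factorial, using $\sum j k_j = \alpha$), delivering the claimed factor $\widetilde{R}^{\alpha} (\alpha!)^{1/\sigma}$ after absorbing the implicit constants from Glaeser and Gevrey into the enlarged radius $\widetilde{R}$.

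Mixed derivatives follow by combining the two analyses: since $\partial_\xi$ acts only on $\langle\xi\rangle^{-c}$, one may first carry out the $\xi$-expansion (which produces a sum of terms of the form $b^{2m+1} \cdot P_m(\langle\xi\rangle^{-c}, \partial_\xi \langle\xi\rangle^{-c}, \dots) \cdot$ polynomial in $a$), and then apply the $x$-analysis to each resulting term, or equivalently use bivariate Faa di Bruno with the same bookkeeping. The main obstacle is precisely this bookkeeping: one must simultaneously verify that (i) the $\xi$-derivatives consume the extra powers of $b$ through the $\langle\xi\rangle^{-c}$ factors exactly, never more and never less, and (ii) the $x$-derivatives attain the sharp power $b^{|\alpha|+1}$ only by virtue of Glaeser absorbing the first-order derivatives of $a$, while the higher-order derivatives are responsible for the Gevrey growth $(j!)^{1/\sigma}$ that must assemble into a single $(\alpha!)^{1/\sigma}$ after summation over all partitions.
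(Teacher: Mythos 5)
Your proposal is correct and follows essentially the same strategy as the paper's proof: a Fa\`a di Bruno expansion of $b=a_{\natural}^{-1/2}$, with the $\xi$-derivatives controlled by $b^{2}\langle\xi\rangle^{-c}\leq 1$ and the $x$-derivatives split between the Glaeser bound for $|\alpha_j|=1$ and the Gevrey bound for $|\alpha_j|\geq 2$, the crucial counting step (your $2k+1-k_1\leq 1+|\alpha|$) matching the paper's $|I_1|\geq 2k-|\alpha|$ combined with $a_{\natural}\leq 1$. The only cosmetic difference is that you organize the partition bookkeeping in Bell-polynomial form (multiplicities $k_j$) and defer the mixed-derivative case to a ``combine the two'' step, whereas the paper runs the bivariate Fa\`a di Bruno (with multi-index partitions $\alpha_j,\beta_j$) directly; the two formulations are equivalent here because $a_{\natural}$ separates in $x$ and $\xi$.
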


The proof is postponed in Appendix \ref{4.subsection.glaeser}.
It relies on the Fa\`a di Bruno formula (see Lemma \ref{4.lemma.faa}) and the Glaeser inequality for $a$ proved in Lemma \ref{4.lemma.glaeser.und.a}.
We follow through with some remarks on this result.

\begin{remark}
	Thanks to inequality \eqref{4.upper.bound.b}, Lemma {\rm \ref{4.lemma.derivatives.b}} implies that $b \in S^{c/2}_{1,c/2}G^{1/\sigma}_{\widetilde{R}}
	$, as defined in {\rm \cite{morisse2016j}}.
	Without the Glaeser inequality described in Lemma {\rm \ref{4.lemma.glaeser.und.a}}, we would only prove that $ b \in S^{c/2}_{1,c}G^{1/\sigma}_{\widetilde{R}} $, whereas $c$ may be in $[1,2]$.

	The importance of the Glaeser inequality explains why we do not define $b$ as $\left(\widetilde{a}+ \langle \xi \rangle^{-c}\right)^{-1/2}$ where $\widetilde{a}$ is defined in \eqref{4.def.tilde.a} as the symbol of operator $a^{(\tau)}$, the Gevrey conjugation of $a$.
	Indeed the symbol $\widetilde{a}$ does not satisfy a priori the Glaeser inequality, as it is not real.
\end{remark}

\begin{remark}
	As $a$ has compact support, $b(\cdot, \xi)$ is constant outside a compact set of $\R_t\times \R_{x}$ which does not depend on $\xi$.
\end{remark}

The bounds \eqref{4.derivatives.b} show in particular that the symbol $b$ has a variable order and a varying "class" with respect to time and space.
Indeed, for $(t,x) = (0,x_0)$, symbol $a_{\natural}$ is equal to $\langle \xi \rangle^{-c}$, hence $b(t=0)$ is likely to be in the class of classical symbols $S^{c/2}_{1,c/2}$.
But as time goes, the order of $b$ decreases.
In fact, for $t\geq \und{t} >0$, there holds simply $a_{\natural} \geq t \geq \und{t}$, hence
$$
	|\d_{x}^{\a} \d_{\xi}^{\beta} b(t,x,\xi)| \leq C_{\a,\beta} \widetilde{R}^{|\a|} \, \a!^{1/\sigma} \beta! \, \und{t}^{-1/2} \,\und{t}^{-|\a|/2} \,\langle\xi\rangle^{-|\beta|}
$$

\noindent for all $t \geq \und{t}$.
Then $b$ is in the classical space of symbols $S^{0}_{1,0}$ for all $t \geq \und{t}$.

\medskip

A way to reconcile both points of view is to introduce the following time-dependent, non-flat metric in the phase space
\be
	\label{4.def.metric}
	g_{(x,\xi)}(\dx,\dxi)  = \frac{|\dx|^2}{a_{\natural}(t,x,\xi)} + \frac{|\dxi|^2}{\langle\xi\rangle^2} .
\ee

\noindent Lemma \ref{4.lemma.derivatives.b} reads now as
\be
	\label{b.Sbg}
	b \in S(b,g)
\ee

\noindent where $S(b,g)$ is defined in Definition \ref{4.defi.class.symbols}.
Both the weight and the metric are time-dependent, hence encoding precisely the dynamic of the system.

\subsection{Properties of class of symbols}

Properties of pseudo-differential calculus come directly from properties of the metric and the weights associated to the metric.
We give here the fundamental statements about the metric and the weights, which are necessary for a pseudo-differential calculus to be coherent (see Lemma \ref{4.lemma.composition}).
For the sake of simplicity and completeness, we choose to postpone definitions and proofs in the Appendix \ref{4.subsection.metrics}.

\begin{lemma}
	\label{4.lemma.g.admissible}
	The metric $g$ defined in \eqref{4.def.metric} is an admissible metric.
\end{lemma}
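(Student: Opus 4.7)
My plan is to verify the three standard properties of an admissible metric on the phase space -- uncertainty, slow variation, and temperance -- as recalled in Appendix \ref{4.subsection.metrics}. The two crucial ingredients are the lower bound $a_\natural(t,x,\xi) \geq \langle \xi \rangle^{-c}$ combined with the structural assumption $c \leq 2$, and the Glaeser inequality of Lemma \ref{4.lemma.glaeser.und.a}. I first translate Glaeser into the key Lipschitz bound: for fixed $t$ and $\xi$, the function $x \mapsto \sqrt{a_\natural(t,x,\xi)}$ is Lipschitz in $x$ with a constant independent of $\xi$, since $|\partial_x a_\natural|^2 = |\partial_x a|^2 \leq C\, a \leq C\, a_\natural$.

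The uncertainty principle is immediate: a direct computation gives the symplectic dual $g^\sigma_{(x,\xi)}(dx,d\xi) = \langle \xi \rangle^2 |dx|^2 + a_\natural(t,x,\xi)|d\xi|^2$, from which the Planck function reads $h(X)^2 = (a_\natural(X)\langle \xi \rangle^2)^{-1} \leq \langle \xi \rangle^{c-2} \leq 1$ by $c \leq 2$. For slow variation, assuming $g_X(X-Y) \leq \varepsilon$ small, the estimate $|\eta-\xi| \leq \sqrt{\varepsilon}\langle \xi \rangle$ yields $\langle \eta \rangle \sim \langle \xi \rangle$ and hence $\langle \eta \rangle^{-c} \sim \langle \xi \rangle^{-c}$. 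Meanwhile, $|y-x|^2 \leq \varepsilon\, a_\natural(X)$ combined with the Lipschitz estimate on $\sqrt{a_\natural(\cdot,\xi)}$ produces $\sqrt{a_\natural(y,\xi)} \sim \sqrt{a_\natural(x,\xi)}$, and summing the two contributions gives $a_\natural(Y) \sim a_\natural(X)$, as required.

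For temperance I need to control $a_\natural(Y)/a_\natural(X)$ and $\langle \eta \rangle^2/\langle \xi \rangle^2$ by polynomial powers of $(1 + g_X^\sigma(X-Y))$, and symmetrically in $X,Y$. For the $\xi$-component, the bound $|\xi-\eta|^2 \leq g_X^\sigma(X-Y)/a_\natural(X) \leq g_X^\sigma(X-Y)\langle \xi \rangle^c$ with $c \leq 2$ produces $\langle \eta \rangle^2 \lesssim \langle \xi \rangle^2(1+g_X^\sigma(X-Y))$. For $a_\natural$, squaring the Lipschitz bound yields $a_\natural(y,\xi) \lesssim a_\natural(x,\xi) + |y-x|^2$, and then $|y-x|^2 \leq g_X^\sigma(X-Y)/\langle \xi \rangle^2$ together with $1/a_\natural(x,\xi) \leq \langle \xi \rangle^c \leq \langle \xi \rangle^2$ gives $|y-x|^2/a_\natural(x,\xi) \lesssim g_X^\sigma(X-Y)$, hence $a_\natural(y,\xi) \lesssim a_\natural(x,\xi)(1+g_X^\sigma(X-Y))$. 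The passage from $a_\natural(y,\xi)$ to $a_\natural(y,\eta)$ is then handled by the $\xi$-temperance already established.

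The main obstacle, and the place where the assumption $c \leq 2$ is essential, is the temperance of the $x$-component of $g$: without $c \leq 2$, the ratio $1/a_\natural \leq \langle \xi \rangle^c$ would not align with the factor $\langle \xi \rangle^{-2}$ supplied by $g_X^\sigma$ in the $x$-direction, and the polynomial bound in $g_X^\sigma$ would fail. The Glaeser inequality plays a decisive role: it allows the increment $a_\natural(y,\xi) - a_\natural(x,\xi)$ to be expressed in terms of $a_\natural(x,\xi)$ and $|y-x|^2$ alone, without picking up a stray constant coming from $\sup |\partial_x a|$ that would be incompatible with the fact that $a_\natural$ vanishes to high order near the singular point $(0,x_0)$.
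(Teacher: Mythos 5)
Your overall plan---three admissibility properties, with Glaeser and $c \le 2$ as the two key inputs---matches the paper's Lemma~\ref{lemma.admissibility.proof}, and your reformulation of Glaeser as a uniform Lipschitz bound on $x\mapsto\sqrt{a_\natural(t,x,\xi)}$ (constant independent of $t,\xi$ since $|\d_x a_\natural|=|\d_x a|\le\sqrt{Ca}\le\sqrt{Ca_\natural}$) is a genuine streamlining: the paper carries out a second-order Taylor expansion of $a(t,\cdot)$ with several intermediate constants at each stage, whereas your squared Lipschitz inequality $a_\natural(y,\xi)\lesssim a_\natural(x,\xi)+|y-x|^2$ handles slow variation and the $x$-part of temperance in one stroke, and your direct use of $\langle\xi\rangle^{c-2}\le 1$ is cleaner than the paper's manipulation through $g_X^\sigma(X-Y)\ge\langle\xi\rangle^{2-c}r^2$.

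There is, however, a gap in the temperance argument, in the step ``the passage from $a_\natural(y,\xi)$ to $a_\natural(y,\eta)$ is then handled by the $\xi$-temperance already established.'' What you established is $\langle\eta\rangle^2 \lesssim \langle\xi\rangle^2\bigl(1+g_X^\sigma(X-Y)\bigr)$. But to compare $\langle\eta\rangle^{-c}$ with $\langle\xi\rangle^{-c}$ (the one piece of $a_\natural$ that actually changes when $\xi\to\eta$) you need the \emph{reverse} inequality $\langle\xi\rangle \lesssim \langle\eta\rangle\bigl(1+g_X^\sigma(X-Y)\bigr)^{N}$, because $\langle\eta\rangle^{-c}/\langle\xi\rangle^{-c}=(\langle\xi\rangle/\langle\eta\rangle)^c$. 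That reverse bound does not follow from what you wrote, and is strictly harder: from $|\xi-\eta|\le (g_X^\sigma)^{1/2}\langle\xi\rangle^{c/2}$ you must extract a bound on $\langle\xi\rangle$ itself, which requires a short case analysis on whether $(g_X^\sigma)^{1/2}\langle\xi\rangle^{c/2}$ dominates $\langle\xi\rangle$, and produces the exponent $1/(2-c)$---exactly the power $c/(2-c)$ appearing in the paper's proof of inequality \eqref{local.temp.ineq.1}. This is the one place where the argument genuinely needs $c<2$ strictly, not just $c\le2$ (for $c=2$ the metric is \emph{not} temperate: take $x=y=x_0$, $t=0$, $\eta=0$, $\xi$ large). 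Invoking ``symmetrically in $X,Y$'' does not supply this step, since both directions of the $\langle\cdot\rangle$-comparison are needed within a single direction of the temperance inequality $g_X(T)/g_Y(T)\le C(1+g_X^\sigma(X-Y))^N$. In the regime of interest $c=2(1-\sigma)\le1$ this is not an obstruction, but the step has to be written out.
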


\noindent See Lemma \ref{lemma.admissibility.proof} in the Appendix and its proof for further details.

\begin{lemma}
	\label{4.lemma.admissible.weights}
	For all $k \in \Z$, symbols $b^k$ are admissible with respect to the metric $g$.
	For all $m \in \R$, symbols $\langle \xi \rangle^m$ are admissible with respect to the metric $g$.
\end{lemma}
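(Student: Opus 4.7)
The plan is to verify, for each of $M=b^{k}$ and $M=\langle\xi\rangle^{m}$, the three defining properties of a $g$-admissible weight: derivative estimates of type $|\partial_{x}^{\alpha}\partial_{\xi}^{\beta}M|\lesssim M\,b^{|\alpha|}\langle\xi\rangle^{-|\beta|}$ (i.e.\ $M\in S(M,g)$), slow variation with respect to $g$, and temperance with respect to the dual metric $g^{\sigma}$. Since Lemma \ref{4.lemma.g.admissible} already gives slow variation and temperance of the reference weights $a_{\natural}^{1/2}=1/b$ and $\langle\xi\rangle$ attached to $g$, the bulk of the argument is to propagate these properties to arbitrary powers.

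For $b^{k}$, $k\in\Z$, I would first derive the estimate $b^{k}\in S(b^{k},g)$ by Fa\`a di Bruno applied to the external map $s\mapsto s^{k}$ with inner function $b$: the lower bound \eqref{4.lower.bound.b} ensures the derivatives of $s\mapsto s^{k}$ at $s=b$ are bounded by $|k|(|k|-1)\cdots\,b^{k-j}$, and plugging in the derivative bounds of Lemma \ref{4.lemma.derivatives.b} for $b$ yields exactly the target inequality
\[
|\partial_{x}^{\alpha}\partial_{\xi}^{\beta}b^{k}(t,x,\xi)|\;\lesssim\; b^{k}(t,x,\xi)\,b(t,x,\xi)^{|\alpha|}\,\langle\xi\rangle^{-|\beta|}.
\]
For slow variation, I would use the fact that under the admissibility of $g$ the ratio $a_{\natural}(y,\eta)/a_{\natural}(x,\xi)$ is close to $1$ whenever $g_{(x,\xi)}(y-x,\eta-\xi)\leq\delta$, hence so is $b(y,\eta)/b(x,\xi)$ and its $k$-th power. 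Temperance is obtained analogously: raising to the power $k$ a polynomial control of the form $b(y,\eta)/b(x,\xi)\leq C(1+g^{\sigma}_{(x,\xi)}(y-x,\eta-\xi))^{N}$ produces a polynomial control for $b^{k}$ with $N$ replaced by $|k|N$.

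For $\langle\xi\rangle^{m}$, which depends only on $\xi$, the derivative estimates reduce to the classical bounds $|\partial_{\xi}^{\beta}\langle\xi\rangle^{m}|\lesssim\langle\xi\rangle^{m-|\beta|}$, which already give $\langle\xi\rangle^{m}\in S(\langle\xi\rangle^{m},g)$ since the $x$-derivatives are zero. Slow variation is the statement that $\langle\eta\rangle\sim\langle\xi\rangle$ whenever $|\eta-\xi|\leq\delta\langle\xi\rangle$, and temperance is Peetre's inequality $\langle\eta\rangle^{m}\leq C\langle\xi\rangle^{m}(1+|\xi-\eta|)^{|m|}$ combined with the fact that the $\xi$-component of $g^{\sigma}$ dominates $|d\xi|^{2}$ on the relevant scale.

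The main obstacle is ensuring that the constants appearing in slow variation and temperance for $b^{k}$ are uniform in $t\in[0,T]$: this reduces to the uniformity of the Glaeser constant $C_{T,r}$ from Lemma \ref{4.lemma.glaeser.und.a}, which in turn holds because that constant was produced on the fixed compact set $[0,T]\times B_{r}(x_{0})$. No new analytic input beyond Lemmas \ref{4.lemma.derivatives.b} and \ref{4.lemma.g.admissible} should be needed.
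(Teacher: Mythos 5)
Your approach is more hands-on than the paper's: the paper deduces admissibility of $b^k$ from the observations that inequalities \eqref{local.temp.ineq.1} and \eqref{local.temp.ineq.2} (established while proving $g$ admissible) show that $a_{\natural}$ and $\langle\xi\rangle^2$ are admissible weights, and then invokes Lerner's Lemma $2.2.22$ (a functional-calculus lemma for admissible weights, applied with $f(t)=t^{-k/2}$, respectively $f(t)=t^{m/2}$) to transfer admissibility to arbitrary powers. You instead re-verify slow variation and temperance directly. The two routes are both viable, and for $b^k$ your verification is essentially sound -- you correctly propagate slow variation through the map $s\mapsto s^k$, and the temperance bound follows by raising to the power $|k|$, modulo the small polish that for negative $k$ one must first pass through the reverse inequality $b(Y)/b(X)\lesssim(1+g^{\sigma}_X(X-Y))^{N'}$, which requires swapping $g^{\sigma}_Y$ for $g^{\sigma}_X$ via the temperance of the metric itself.

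There is however a genuine error in your temperance argument for $\langle\xi\rangle^m$. You invoke "the fact that the $\xi$-component of $g^{\sigma}$ dominates $|d\xi|^{2}$", but with $g^{\sigma}_X(Y)=\langle X_2\rangle^2|Y_1|^2+a_{\natural}(t,X)|Y_2|^2$ the $\xi$-component has coefficient $a_{\natural}$, which can be as small as $\langle\xi\rangle^{-c}$; so on the contrary it is \emph{dominated} by $|d\xi|^2$. Peetre's inequality alone therefore does not close the argument: taking $|\xi-\eta|\sim\langle\xi\rangle$ shows $(1+|\xi-\eta|)$ is not polynomially controlled by $1+a_{\natural}|\xi-\eta|^2$ uniformly. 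The missing ingredient is the dichotomy the paper uses in the proof of temperance: when $g_X(X-Y)\leq r^2$ slow variation gives $\langle\eta\rangle\sim\langle\xi\rangle$, and when $g_X(X-Y)>r^2$ one has \eqref{local.gsigma.1}, namely $g^{\sigma}_X(X-Y)\geq\langle\xi\rangle^{2-c}r^2$, which combined with $g^{\sigma}_X(X-Y)\geq\langle\xi\rangle^{-c}|\xi-\eta|^2$ and the constraint $c<2$ yields $|\xi-\eta|\lesssim(1+g^{\sigma}_X(X-Y))^{1/(2-c)}$, and only then does Peetre finish the job. As written, your argument does not supply this step.
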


\noindent In particular, Lemma \ref{4.lemma.derivatives.b} implies
\begin{lemma}
	\label{4.lemma.b.symbol}
	For any $k$ in $\Z$, the symbol $b^{k}$ is in $S(b^{k}, g)$.
\end{lemma}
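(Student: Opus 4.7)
The plan is to split into the cases $k \geq 0$ and $k < 0$, both relying on the base result $b \in S(b,g)$ (equation \eqref{b.Sbg}) coming from Lemma \ref{4.lemma.derivatives.b}, and on the algebra property $S(M_1,g)\cdot S(M_2,g)\subset S(M_1M_2,g)$ (Lemma \ref{4.lemma.algebra}). The case $k=0$ is trivial since constants lie in $S(1,g)$. For $k\geq 1$, a straightforward induction writing $b^{k+1}=b\cdot b^k$ and applying the algebra lemma yields $b^k \in S(b^k,g)$.

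The substantive step is the case $k<0$, which I would reduce to showing $b^{-1}\in S(b^{-1},g)$ (the algebra property then takes care of all $k\leq-1$ by induction). To unwind the definition of the class, recall that $p\in S(M,g)$ means
$$
|\d_x^\alpha \d_\xi^\beta p(t,x,\xi)|\leq C_{\alpha,\beta}\,M(t,x,\xi)\,b(t,x,\xi)^{|\alpha|}\,\langle\xi\rangle^{-|\beta|},
$$
so I need the symmetric estimate $|\d_x^\alpha \d_\xi^\beta b^{-1}|\leq C_{\alpha,\beta}\,b^{-1+|\alpha|}\langle\xi\rangle^{-|\beta|}$. I would prove this by induction on $|\alpha|+|\beta|$. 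The base case $|\alpha|+|\beta|=0$ is exactly \eqref{4.upper.bound.b} in dual form: $b^{-1}=a_\natural^{1/2}$ is bounded above on $[0,T]\times B_r(x_0)\times\R$ since $a$ is bounded there. For the inductive step, differentiate the identity $b\cdot b^{-1}=1$ via the Leibniz rule:
$$
b\cdot \d_x^\alpha \d_\xi^\beta b^{-1}=-\sum_{\substack{\gamma\leq\alpha,\ \delta\leq\beta\\ (\gamma,\delta)\neq(0,0)}}\binom{\alpha}{\gamma}\binom{\beta}{\delta}\bigl(\d_x^\gamma\d_\xi^\delta b\bigr)\bigl(\d_x^{\alpha-\gamma}\d_\xi^{\beta-\delta}b^{-1}\bigr).
$$
Each factor $\d_x^\gamma\d_\xi^\delta b$ is controlled by Lemma \ref{4.lemma.derivatives.b} (i.e.\ by $b^{1+|\gamma|}\langle\xi\rangle^{-|\delta|}$, up to a constant), and each $\d_x^{\alpha-\gamma}\d_\xi^{\beta-\delta}b^{-1}$ by the induction hypothesis (by $b^{-1+|\alpha|-|\gamma|}\langle\xi\rangle^{-|\beta|+|\delta|}$). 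Multiplying, every term on the right-hand side is bounded by a constant times $b^{|\alpha|}\langle\xi\rangle^{-|\beta|}$. Dividing through by $b$ (which is bounded below by \eqref{4.lower.bound.b}) gives the desired estimate.

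Once $b^{-1}\in S(b^{-1},g)$ is established, I would finish by applying Lemma \ref{4.lemma.algebra} inductively to obtain $b^k=b^{-1}\cdot b^{k+1}\in S(b^k,g)$ for all negative $k$. The main obstacle in the argument is really just ensuring that the weight bookkeeping works out cleanly in the Leibniz expansion; the identity $b\cdot b^{-1}=1$ is the key trick that avoids any Faà di Bruno computation and directly reuses Lemma \ref{4.lemma.derivatives.b} without recomputing derivatives of $a_\natural^{-1/2}$ from scratch.
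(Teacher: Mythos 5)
Your proof is correct and, for the crucial case $k=-1$, takes a genuinely different route from the paper. The paper treats $k=1$ as Lemma~\ref{4.lemma.derivatives.b}, extends to $k\geq 1$ by the algebra Lemma~\ref{4.lemma.algebra}, and then says that $k=-1$ is ``proved by the same proof as Lemma~\ref{4.lemma.derivatives.b}, as $b^{-1}=a_{\natural}^{1/2}$'' --- i.e.\ it re-runs the Fa\`a di Bruno computation of Appendix~\ref{4.subsection.glaeser} with $g(y)=y^{1/2}$ in place of $g(y)=y^{-1/2}$. You instead differentiate the identity $b\cdot b^{-1}=1$ by Leibniz and induct on $|\alpha|+|\beta|$, which reuses Lemma~\ref{4.lemma.derivatives.b} as a black box and avoids the combinatorial bookkeeping entirely; the weight accounting closes tidily because $b^{1+|\gamma|}\langle\xi\rangle^{-|\delta|}\cdot b^{-1+|\alpha|-|\gamma|}\langle\xi\rangle^{-|\beta|+|\delta|}=b^{|\alpha|}\langle\xi\rangle^{-|\beta|}$, and one divides out the leading factor $b>0$. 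This is shorter and cleaner than the paper's route; the small price is that your induction produces constants $C_{\alpha,\beta}$ that may grow with $|\alpha|+|\beta|$, whereas the direct Fa\`a di Bruno computation yields the bounded sequence stated in Lemma~\ref{4.lemma.derivatives.b} --- but Definition~\ref{4.defi.class.symbols} only asks for a constant for each $(\alpha,\beta)$, so nothing is lost for membership in $S(b^{-1},g)$. Two small inaccuracies worth flagging: your base case $|\alpha|+|\beta|=0$ is simply $|b^{-1}|\leq C_{0,0}\,b^{-1}$, which is trivially true with $C_{0,0}=1$, and does not actually rely on $b^{-1}$ being bounded above (that would be needed for $S(1,g)$, not $S(b^{-1},g)$); and at the end, positivity of $b$ is all you need to divide, not the lower bound \eqref{4.lower.bound.b}. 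Also note $b^{-1}=a_{\natural}^{1/2}$, not $a_{\natural}^{-1/2}$ as written in your closing remark.
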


\begin{proof}
	The case $k=1$ is just Lemma \ref{4.lemma.derivatives.b}.
	Hence the result for any $k \geq 1$, thanks to Lemma \ref{4.lemma.algebra}.
	The case $k=-1$ is proved by the same proof as Lemma \ref{4.lemma.derivatives.b}, as $b^{-1} = a_{\natural}^{1/2}$.
\end{proof}

Those preliminary lemmas on basic properties of the metric and the weights will be used in the next Section.
We continue by linking spaces of symbols $S(M,g)$ for weights $M$ admissible for $g$ and classical spaces of symbols $S^m_{\rho, \delta}$.

\begin{lemma}[Embeddings]
	\label{4.lemma.embedding}
	For all $m\in\R$, the following embedding holds
	\be
		 S^{m}_{1,0} \, \subset \, S\left( \langle \cdot \rangle^m , g \right) .
	\ee

	\noindent Let $M$ be an admissible weight satisfying $M(x,\xi) \leq \langle \xi \rangle^m$ for all $(x,\xi) \in \R \times \R$ for some $m\in\R$. Then the following embedding holds
	\be
		S(M,g) \, \subset \, S^m_{1, c/2} .
	\ee

\end{lemma}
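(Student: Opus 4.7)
The plan is to unwind the definition of $S(M,g)$ in coordinates and then compare the resulting derivative bounds with those defining the classical H\"ormander classes $S^m_{\rho,\delta}$.

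Recall that a symbol $p$ lies in $S(M,g)$ iff for all $\alpha,\beta \in \N$,
$$
|\partial_x^{\alpha}\partial_{\xi}^{\beta}p(x,\xi)| \lesssim_{\alpha,\beta} M(x,\xi)\,a_{\natural}(t,x,\xi)^{-\alpha/2}\,\langle\xi\rangle^{-\beta},
$$
since the metric $g$ in \eqref{4.def.metric} assigns to the $x$-direction the length scale $a_{\natural}^{1/2}$ and to the $\xi$-direction the length scale $\langle\xi\rangle$. The proof then reduces to comparing $a_{\natural}^{-\alpha/2}$ to $1$ (for the first embedding) and to $\langle\xi\rangle^{c\alpha/2}$ (for the second).

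For the first embedding, I would take $p \in S^m_{1,0}$ and observe that on the compact set $[0,T]\times B_r(x_0)$ the function $a=(t+|x-x_0|^2)e(t,x)$ is bounded above by a constant, and since $\langle\xi\rangle^{-c}\le 1$ one has $a_{\natural}\le C$, hence $a_{\natural}^{-\alpha/2}\ge C^{-\alpha/2}$ uniformly. Starting from the $S^m_{1,0}$ bound $|\partial_x^{\alpha}\partial_{\xi}^{\beta}p|\lesssim \langle\xi\rangle^{m-\beta}$, one multiplies by the trivial factor $1\le C^{\alpha/2}\,a_{\natural}^{-\alpha/2}$ to get exactly the $S(\langle\cdot\rangle^m,g)$ bound.

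For the second embedding, I would use the key lower bound $a_{\natural}(t,x,\xi)\ge \langle\xi\rangle^{-c}$, which is built into the definition \eqref{def.a.natural}. This yields $a_{\natural}^{-\alpha/2}\le \langle\xi\rangle^{c\alpha/2}$. Starting from a symbol $p\in S(M,g)$ with $M\le \langle\xi\rangle^m$, one writes
$$
|\partial_x^{\alpha}\partial_{\xi}^{\beta}p(x,\xi)|\lesssim M\,a_{\natural}^{-\alpha/2}\,\langle\xi\rangle^{-\beta}\lesssim \langle\xi\rangle^{m+(c/2)\alpha-\beta},
$$
which is the defining bound for $S^m_{1,c/2}$ with $\rho=1$ and $\delta=c/2$.

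There is no real obstacle here: both embeddings are purely bookkeeping once the metric has been decoded in coordinates and the two structural bounds $a_{\natural}\lesssim 1$ (from compact support of $e$) and $a_{\natural}\ge \langle\xi\rangle^{-c}$ (by construction) are invoked. The only minor point to check is that the constants do not depend on $(t,x,\xi)$ in the stated domain, which follows from Assumption \ref{4.hypo.A}.
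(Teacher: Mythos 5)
Your proposal is correct, and it is the natural (and essentially only) way to prove this lemma; the paper claims to postpone the proof to the Appendix but in fact never spells it out, presumably because the author regarded it as immediate from the definitions. Both embeddings reduce, exactly as you say, to the two structural bounds on $b=a_{\natural}^{-1/2}$ recorded in \eqref{4.lower.bound.b} and \eqref{4.upper.bound.b}: the global upper bound on $a$ (hence on $a_{\natural}$, giving $b\gtrsim 1$) for the first inclusion, and the built-in lower bound $a_{\natural}\geq\langle\xi\rangle^{-c}$ (giving $b\leq\langle\xi\rangle^{c/2}$) for the second.

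One tiny point of hygiene: in the first embedding you invoke boundedness of $a$ ``on the compact set $[0,T]\times B_r(x_0)$,'' but the seminorm estimate in Definition \ref{4.defi.class.symbols} is required uniformly in $(t,x,\xi)$. The argument still works because $a$ has compact support in $(t,x)$ by Assumption \ref{4.hypo.A} and is nonnegative, so $0\leq a\leq \sup a<\infty$ globally and hence $a_{\natural}\leq \sup a + 1$ everywhere. It is worth stating the bound as a global one rather than a local one, but this is a matter of phrasing rather than a gap.
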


\noindent The proof is postponed in the Appendix.
%
%
%
%

\subsection{pseudo-differential calculus}

We use here the Weyl quantization, which we recall
$$
	{\rm op}(a)u(x) = {\rm op}_{1/2}(a)u(x) = \int e^{2\pi i (x-y)\cdot \xi} a\left(\frac{x+y}{2}, \xi\right) u(y) dy d\xi .
$$

We recall the algebra property of general classes of symbols $S(M,g)$.
Let $M_1$ and $M_2$ be both admissible weights for the metric $g$.
\begin{lemma}
	\label{4.lemma.algebra}
	For any $p_j \in S(M_j,g)$ with $j=1,2$, there holds
	$$
		p_1 p_2 \in S(M_1M_2,g) .
	$$
\end{lemma}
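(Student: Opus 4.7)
The plan is to proceed directly from the intrinsic definition of $S(M,g)$, which characterizes membership in terms of the size of successive derivatives viewed as multilinear forms on the tangent space equipped with the metric $g$. Concretely, $p_j \in S(M_j, g)$ means that for every integer $k \geq 0$ there is a constant $C_{k,j}$ such that
\[
 \bigl| p_j^{(k)}(x,\xi)(T_1, \dots, T_k) \bigr| \leq C_{k,j}\, M_j(x,\xi) \prod_{i=1}^{k} g_{(x,\xi)}(T_i)^{1/2}
\]
for all $(x,\xi)$ and all tangent vectors $T_1, \dots, T_k$. The target is to produce an analogous bound for $p_1 p_2$ with $M_1 M_2$ in place of $M_j$.

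The main step is an application of the Leibniz rule, written intrinsically for multilinear forms: for any $k$ and any $T_1, \dots, T_k$,
\[
 (p_1 p_2)^{(k)}(x,\xi)(T_1,\dots,T_k) = \sum_{I \sqcup J = \{1,\dots,k\}} p_1^{(|I|)}(x,\xi)(T_I) \, p_2^{(|J|)}(x,\xi)(T_J),
\]
where $T_I$ denotes the tuple of vectors indexed by $I$. I would then estimate each summand by using the defining bounds for $p_1$ and $p_2$, which yields
\[
 \bigl| p_1^{(|I|)}(T_I) \, p_2^{(|J|)}(T_J) \bigr| \leq C_{|I|,1} C_{|J|,2} \, M_1(x,\xi) M_2(x,\xi) \prod_{i=1}^{k} g_{(x,\xi)}(T_i)^{1/2},
\]
since the products of the $g$-norms over $I$ and over $J$ combine into a product over the full index set $\{1,\dots,k\}$. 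Summing over the $2^k$ partitions $I \sqcup J$ yields a finite constant $C_k$, which gives exactly the required estimate for $p_1 p_2$ with weight $M_1 M_2$.

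I expect no real obstacle here: the result is structural, independent of the specific metric $g$ defined in \eqref{4.def.metric} and of the particular admissible weights $M_j$. The only point to double-check is that the intrinsic multilinear formulation of the Leibniz rule is the one used in the paper's definition of $S(M,g)$ (deferred to the Appendix); if instead the definition is given componentwise in coordinates $(x,\xi)$, I would simply rewrite the argument using the ordinary Leibniz formula $\partial_x^{\alpha} \partial_{\xi}^{\beta}(p_1 p_2) = \sum_{\alpha', \beta'} \binom{\alpha}{\alpha'} \binom{\beta}{\beta'} \partial_x^{\alpha'} \partial_{\xi}^{\beta'} p_1 \cdot \partial_x^{\alpha-\alpha'} \partial_{\xi}^{\beta-\beta'} p_2$ and bound each factor through the definition of $S(M_j,g)$, the extra factors of $a_\natural^{-1/2}$ and $\langle \xi \rangle^{-1}$ (coming from the metric \eqref{4.def.metric}) distributing exactly as needed.
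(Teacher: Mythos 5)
Your proposal is correct. Note that the paper's Definition \ref{4.defi.class.symbols} is given coordinatewise (bounds on $|\partial_x^{\alpha}\partial_\xi^{\beta}f|$ in terms of $M\,b^{|\alpha|}\langle\xi\rangle^{-|\beta|}$), so the fallback in your last paragraph is the directly applicable version, and it coincides with the paper's own one-line argument via the ordinary Leibniz formula; your primary presentation via intrinsic multilinear forms is an equivalent Hörmander/Lerner-style packaging of the same idea.
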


\noindent The proof is straightforward, using Leibniz formula and Definition \ref{4.defi.class.symbols}.

\medskip

We now state Theorem {\rm 2.3.7} in {\rm \cite{lerner2011metrics}}, concerning the composition of operators with symbols in $S(M,g)$.
For two symbols $p_1$ and $p_2$, we denote $p_1 \sharp p_2$ the symbol satisfying
$$
	{\rm op}(p_1) {\rm op}(p_2) = {\rm op}(p_1 \sharp p_2) .
$$

\noindent We denote also
\be
	\label{4.def.lambda}
	\lambda(t,x,\xi) = b^{-1}(t,x,\xi) \langle \xi \rangle.
\ee

\begin{lemma}[Composition]
	\label{4.lemma.composition}
	Let $M_1$ and $M_2$ two admissible weights for $g$, and $p_j \in S(M_j,g)$.
	Then for all $\nu$ in $\N$ there holds
	\be
		\label{composition.general}
		p_1 \sharp p_2 - \left( \sum_{ 0 \leq k < \nu } 2^{-k} \sum_{|\a|+|\beta|=k} \frac{(-i)^{|\beta|}}{\a!\beta!} \d_{\xi}^{\beta}\d_x^{\a}p_1 \d_{\xi}^{\a} \d_{x}^{\beta}p_2 \right) \, \in S(M_1M_2\lambda^{-\nu}, g)
	\ee

	\noindent where $\lambda$ is defined by \eqref{4.def.lambda}.
	In particular, there holds
	\be
		\label{composition.crochet}
		p_1 \sharp p_2 - \frac{1}{2i} \{p_1 , p_2 \} \, \in S(M_1M_2\lambda^{-2}, g)
	\ee

	\noindent where $\{ \cdot, \cdot \}$ denotes the usual Poisson bracket on $\R_{x} \times \R_{\xi}$.
	About commutators:
	\be
		[\op(p_1) , \op(p_2)] \in \op \left( S \left( M_1 M_2 \lambda^{-2} , g \right) \right) .
	\ee
\end{lemma}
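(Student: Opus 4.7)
The plan is to view the statement as a specialization of the general composition theorem (Theorem 2.3.7 in \cite{lerner2011metrics}) to the particular metric $g$ defined in \eqref{4.def.metric} and its natural Planck function. All the pieces are already in place: admissibility of $g$ is Lemma \ref{4.lemma.g.admissible}, admissibility of the weights is Lemma \ref{4.lemma.admissible.weights}, and admissibility is stable under products, so $M_1 M_2$ and $M_1 M_2 \lambda^{-\nu}$ are admissible weights for $g$ whenever $M_1, M_2$ are. Thus applying Lerner's theorem is legitimate.

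The key computation to carry out explicitly is the identification of $\lambda$ as the inverse of the Planck function of $g$. I would compute the symplectic dual $g^\sigma$ of $g$: at $(t,x,\xi)$,
\[
g^\sigma_{(x,\xi)}(dx, d\xi) = \langle\xi\rangle^2 |dx|^2 + a_\natural(t,x,\xi) |d\xi|^2 .
\]
Since both $g$ and $g^\sigma$ are diagonal in the canonical splitting, a direct computation gives
\[
\sup_{T\neq 0} \frac{g_{(x,\xi)}(T)}{g^\sigma_{(x,\xi)}(T)} = \frac{1}{a_\natural(t,x,\xi)\, \langle\xi\rangle^2} = \frac{b^2(t,x,\xi)}{\langle\xi\rangle^2} = \lambda^{-2}(t,x,\xi),
\]
so $\lambda^{-1}$ is precisely the Planck function of $g$. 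With this identification, Lerner's composition theorem immediately yields the asymptotic expansion \eqref{composition.general}: the truncated expansion of order $\nu$ has remainder controlled by the weight $M_1 M_2$ multiplied by $\nu$ powers of the Planck function, which is exactly $M_1 M_2 \lambda^{-\nu}$.

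For the corollary \eqref{composition.crochet}, I apply \eqref{composition.general} with $\nu = 2$. The sum then runs over $|\alpha|+|\beta| \le 1$, giving $p_1 p_2 + \frac{1}{2i}\{p_1,p_2\}$ (the zeroth order term is the pointwise product, the first order contribution is the antisymmetric combination that produces the Poisson bracket with the factor $1/(2i)$ coming from the Weyl normalization $2^{-k}(-i)^{|\beta|}/\alpha!\beta!$), and the remainder lies in $S(M_1 M_2 \lambda^{-2}, g)$. The commutator bound follows by writing
\[
[\op(p_1), \op(p_2)] = \op(p_1 \sharp p_2 - p_2 \sharp p_1),
\]
applying \eqref{composition.crochet} to each term, and noting that the symmetric leading parts $p_1 p_2$ cancel while the Poisson bracket terms add to $\frac{1}{i}\{p_1,p_2\} \in S(M_1 M_2 \lambda^{-2},g)$ (using Lemma \ref{4.lemma.algebra} and the fact that each $\partial_x$ gains a factor $b^{-1}$ and each $\partial_\xi$ a factor $\langle\xi\rangle^{-1}$, so $\{p_1,p_2\} \in S(M_1 M_2 \lambda^{-2},g)$).

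There is no real obstacle here beyond the bookkeeping: the substance of the argument is entirely contained in Lerner's theorem, and the only item requiring verification is the identification of the gain factor $\lambda$, which drops out cleanly because $g$ is diagonal with weights whose product is $a_\natural \langle\xi\rangle^2 = \lambda^2$.
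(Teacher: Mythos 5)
Your proposal is correct and takes essentially the same route as the paper: the paper itself presents this lemma as nothing more than a restatement of Theorem 2.3.7 of \cite{lerner2011metrics} applied to the metric $g$, and you do exactly that, with the worthwhile extra step of verifying explicitly that $\lambda = a_\natural^{1/2}\langle\xi\rangle = b^{-1}\langle\xi\rangle$ is the gain factor (inverse Planck function) of $g$, which is precisely the point needed to translate Lerner's abstract remainder classes into the $S(M_1M_2\lambda^{-\nu},g)$ form used here. A small side remark, not a defect in your reasoning: the displayed estimate \eqref{composition.crochet} in the paper's statement is missing the term $-p_1p_2$ (one should subtract both the $k=0$ and $k=1$ terms of the expansion to land in $S(M_1M_2\lambda^{-2},g)$), and indeed the paper uses the corrected form $p_1\sharp p_2 - p_1p_2 - \tfrac{1}{2i}\{p_1,p_2\}\in S(M_1M_2\lambda^{-2},g)$ in every downstream application, so your unpacking of the $\nu=2$ case is the right one.
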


In the previous Lemma, we see that the powers of the symbol $\lambda$ act as a gradation for the remainder term in composition of operators.
In the case of usual flat metrics on the phase space, the symbol $\lambda$ is simply equal to $\langle \xi \rangle$, and the previous Lemma reads (for instance) as $p_1 \sharp p_2 - p_1p_2 \in S^{m_1+m_2-1}_{1,0}$ for $p_1$ and $p_2$ in $S^{m_j}_{1,0}$.

We pursue by giving a result on the inversion of $\op(b)$ up to any low order remainder term.
\begin{lemma}[Inversion of $\op(b)$]
	\label{lemma.inversion}
	For any $\nu \in \N$, there is a symbol $c_{\nu}$ in $S(b^{-1} , g)$ such that
	\be
		\op(b) \op(c_{\nu}) - {\rm Id} \in \op \left( S(\lambda^{-(\nu+1)} , g) \right) .
	\ee
\end{lemma}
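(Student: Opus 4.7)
The construction is by a Neumann-series parametrix, exploiting the scalar-valued nature of $b$. Take as first approximation $c^{(0)} := b^{-1}$, which lies in $S(b^{-1}, g)$ by Lemma \ref{4.lemma.b.symbol}. Applying the composition identity \eqref{composition.crochet} to $p_1 = b$ and $p_2 = b^{-1}$, the subprincipal Poisson bracket
$$\{b, b^{-1}\} = \d_\xi b \cdot \d_x b^{-1} - \d_x b \cdot \d_\xi b^{-1}$$
vanishes identically, because $\d_j b^{-1} = -b^{-2} \d_j b$ for $j = x, \xi$ makes the two contributions cancel. Hence $b \sharp b^{-1} = 1 + r$ with remainder $r \in S(\lambda^{-2}, g)$.

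Next, set
$$c_\nu := b^{-1} \sharp \sum_{k=0}^{\nu} (-1)^k r^{\sharp k},$$
with the convention $r^{\sharp 0} := 1$. Iterating Lemma \ref{4.lemma.composition} yields $r^{\sharp k} \in S(\lambda^{-2k}, g)$, so each summand belongs to $S(b^{-1}\lambda^{-2k}, g) \subset S(b^{-1}, g)$; the last inclusion holds because $\lambda \geq \langle \xi \rangle^{1 - c/2} \geq 1$ thanks to the upper bound \eqref{4.upper.bound.b} on $b$ and the hypothesis $c \in (0,2]$. Thus $c_\nu \in S(b^{-1}, g)$. Associativity and bilinearity of the $\sharp$-product combined with the finite telescoping identity
$$(1 + r) \sharp \sum_{k=0}^{\nu} (-r)^{\sharp k} = 1 + (-1)^{\nu}\, r^{\sharp (\nu+1)}$$
(verified by term-by-term cancellation after reindexing $k \mapsto k+1$ in the second group) yields $b \sharp c_\nu = 1 + (-1)^\nu r^{\sharp(\nu+1)}$. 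Since $r^{\sharp(\nu+1)} \in S(\lambda^{-2(\nu+1)}, g) \subset S(\lambda^{-(\nu+1)}, g)$, the conclusion follows.

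The proof is largely formal. The only delicate points are (i) the exact cancellation of $\{b, b^{-1}\}$, which is what produces a $\lambda^{-2}$ remainder at the first step rather than merely $\lambda^{-1}$ and so drives the gain per iterate, and (ii) the verification that $\lambda \geq 1$ uniformly so that the weights $\lambda^{-k}$ form a genuine decreasing gradation on the symbol classes $S(\cdot, g)$; both are transparent once \eqref{4.upper.bound.b} and the restriction $c \leq 2$ are invoked. The bookkeeping on weight admissibility at each step is covered by Lemmas \ref{4.lemma.admissible.weights} and \ref{4.lemma.algebra}.
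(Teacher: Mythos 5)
Your proof is correct, and it takes a genuinely different route from the paper's. The paper proceeds by induction: it sets $c_0=b^{-1}$, and at each step defines the correction $c_\nu-c_{\nu-1}=b^{-1}\bigl(1-b\sharp c_{\nu-1}\bigr)$, which by the inductive hypothesis lies in $S(b^{-1}\lambda^{-\nu},g)$, gaining exactly one power of $\lambda^{-1}$ per iteration. You instead build a closed-form Neumann parametrix $c_\nu=b^{-1}\sharp\sum_{k=0}^\nu(-1)^k r^{\sharp k}$ out of the first-order residual $r=b\sharp b^{-1}-1$, and the whole argument hinges on the scalar cancellation $\{b,b^{-1}\}=0$, which promotes $r$ from $S(\lambda^{-1},g)$ to $S(\lambda^{-2},g)$. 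That bracket identity is the one genuinely nontrivial input, and it buys you a gain of $\lambda^{-2}$ per term in the sum, so your final error sits in $S(\lambda^{-2(\nu+1)},g)$ — strictly smaller than the $S(\lambda^{-(\nu+1)},g)$ the lemma asks for. The paper never invokes this cancellation; it only uses the zeroth-order term $bc=1$ of the composition expansion at each inductive step, which suffices since it only needs one order per step. The trade-offs: your construction is explicit and slightly sharper (overshoots the stated remainder order), while the paper's induction is more parsimonious in the calculus it uses (no bracket cancellation needed) and matches the stated remainder exactly. Both rely in the same essential way on the uncertainty principle $\lambda\geq1$ to ensure the weights $\lambda^{-k}$ form a decreasing scale, and on Lemmas \ref{4.lemma.composition} and \ref{4.lemma.algebra} for the symbolic bookkeeping.
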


\begin{proof}
	Our aim is to solve the equation
	$$
		b \sharp c_{\nu} = 1 + S(\lambda^{-(\nu+1)} , g) .
	$$

	\noindent We proceed by induction on $\nu$, using equality \eqref{composition.general}.
	Denoting
	$$
		\omega_{k}(p_1 , p_2) = 2^{-k} \sum_{|\a|+|\beta|=k} \frac{(-i)^{|\beta|}}{\a!\beta!} \d_{\xi}^{\beta}\d_x^{\a}p_1 \d_{\xi}^{\a} \d_{x}^{\beta}p_2
	$$

	\noindent equality \eqref{composition.general} states
	$$
		p_1 \sharp p_2 = \sum_{ 0 \leq k < \nu } \omega_k(p_1 , p_2) + S(M_1M_2\lambda^{-\nu}, g) .
	$$

	\noindent Note that in particular
	$$
		\omega_0(p_1 , p_2) = p_1 p_2 \quad , \quad \omega_1(p_1 , p_2) = -\frac{i}{2}\{ p_1 , p_2 \}.
	$$

	\noindent For $\nu = 0$, there holds
	$$
		b \sharp c_{0} = bc_0 + S(\lambda^{-1} , g)
	$$

	\noindent so that
	$$
		c_{0} = b^{-1} .
	$$

	Let $\nu \geq 0$, and assume $c_{\nu - 1} \in S(\lambda^{-(\nu-1)}, g) $ solves
	$$
		b \sharp c_{\nu - 1} = 1 + S(\lambda^{-\nu}, g) .
	$$
	Then on one side
	\begin{eqnarray*}
		b \sharp \left( c_{\nu} - c_{\nu-1} \right)
		& = &
		\sum_{ 0 \leq k < \nu+1 } \omega_k(b , c_{\nu} - c_{\nu-1}) + S(\lambda^{-(\nu+1)}, g) \\
		& = &
		b ( c_{\nu} - c_{\nu-1} ) + S(\lambda^{-(\nu+1)}, g)
	\end{eqnarray*}

	\noindent as $c_{\nu} - c_{\nu-1} \in S(\lambda^{-\nu}, g)$, and on the other side thanks to the equation there holds
	$$
		b \sharp \left( c_{\nu} - c_{\nu-1} \right) = \left( 1 - b \sharp c_{\nu-1} \right) + S(\lambda^{-(\nu+1)}, g)
	$$

	\noindent which leads to
	$$
		c_{\nu} - c_{\nu-1} = b^{-1}\left( 1 - b \sharp c_{\nu-1} \right)
	$$

	\noindent which is in $S(b^{-1},g)$.
\end{proof}

Finally, we recall Theorem 2.5.1 of \cite{lerner2011metrics}.
\begin{lemma}[Action]
	\label{4.lemma.action}
	Let $p$ be in $S(1,g)$.
	Then $\op(p)$ acts continuously on $L^2$.
\end{lemma}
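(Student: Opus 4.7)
The plan is to invoke the Calder\'on--Vaillancourt theorem in the form adapted to an admissible metric, which is precisely the content of Theorem 2.5.1 in \cite{lerner2011metrics}. The hypotheses of that theorem are (i) that $g$ is admissible, and (ii) that the weight $M=1$ is admissible for $g$. The first is Lemma \ref{4.lemma.g.admissible}, proved above; the second is trivial since a constant weight automatically satisfies the slow variation and temperance conditions. So the statement reduces to a direct citation. The point of including it here is that subsequent sections rely on the $L^2$-boundedness of $\op\bigl(S(\lambda^{-2},g)\bigr)$-type remainders, which follows from this lemma combined with the admissibility of $\lambda^{-2}$ (since $\lambda \geq 1$).

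If one were to reprove the theorem from scratch, the natural route is a Cotlar--Stein almost-orthogonality argument. First, using slow variation and temperance of $g$, construct a partition of unity $(\varphi_k)_{k}$ with each $\varphi_k \in S(1,g)$ supported in a $g$-ball $U_k$ of radius $\sim 1$, with only finitely many balls overlapping at any point. Decompose $\op(p) = \sum_k T_k$ with $T_k = \op(p \varphi_k)$. By Lemma \ref{4.lemma.algebra}, each $p\varphi_k$ still lies in $S(1,g)$, uniformly in $k$.

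The core step is the almost-orthogonality bound: for any $N$, there exists $C_N$ with
$$
\|T_k^* T_\ell\|_{L^2\to L^2} + \|T_k T_\ell^*\|_{L^2\to L^2} \leq C_N (1 + \delta_g(U_k,U_\ell))^{-N},
$$
where $\delta_g$ denotes the $g$-distance between the balls. This is obtained by writing the composition symbol via the oscillatory integral defining $\sharp$, then integrating by parts to extract the decay in $\delta_g$; the hypothesis of temperance on $g$ is exactly what permits these integrations by parts to produce gains uniform in the base point. Once such an estimate is in place, the Cotlar--Stein lemma yields $\|\op(p)\|_{L^2\to L^2} \leq C \sup_k \|T_k\|^{1/2} \cdot \bigl(\sup_k \sum_\ell (1+\delta_g(U_k,U_\ell))^{-N}\bigr)^{1/2}$, which is finite by the admissibility (specifically the covering property) of $g$.

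The main obstacle in the reproof is the almost-orthogonality bound: it requires quantifying how the uncertainty principle built into $g$ (encoded by $\lambda \geq 1$) converts $g$-separation into operator-norm decay, which is a nontrivial piece of the general theory in \cite{lerner2011metrics}. In the present paper, however, none of this machinery needs to be redone, and the proof is a one-line reference.
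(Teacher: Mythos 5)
Your proposal matches the paper: the lemma is stated as a recollection of Theorem 2.5.1 in \cite{lerner2011metrics}, with no independent proof given, and you identify exactly this citation and correctly note that the required hypotheses (admissibility of $g$, established in Lemma \ref{4.lemma.g.admissible}, and triviality of the constant weight $M=1$) are already in place. The additional Cotlar--Stein sketch is a fine gloss on what the cited theorem involves, but the paper's own ``proof'' is just the reference.
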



\subsection{Energy estimate}


In Section \ref{4.subsection.derivative.energy}, we observed cancellations in $\d_t E$.
The next step is to bound the remainder terms in ${\rm E_2}$, ${\rm E_3}$ and ${\rm E_4}$ by a fraction of the negative term ${\rm E_1}$ .
This is done thanks to the properties of the pseudo-differential calculus described in Appendix \ref{4.subsection.metrics} and Lemma \ref{4.lemma.embedding} ; by choice of the exponent $c>0$ of the correction term $\langle \xi \rangle^{-c}$ which appears in the definition \eqref{def.a.natural} of $a_{\natural}$ ; and by a lower bound on the Gevrey index $\sigma$.

\subsubsection{Estimate of ${\rm E_{2}}$}

The term ${\rm E_{2}}$, defined in \eqref{4.FII} is equal, thanks to the previous computations, to the sum of \eqref{4.FII.1}, \eqref{4.FII.2} and \eqref{4.FII.3}.

$\bullet$ First we focus on \eqref{4.FII.1}, using the results of Section \ref{subsubsection.E1}.
We write
\begin{eqnarray*}
	- {\rm Re}\, \langle \op(S)
	\begin{pmatrix}
		0 & 0 \\
		D^{-c} & 0
	\end{pmatrix}
	\d_{x} v, \op(S) v \rangle
	& = &
	- {\rm Re}\, \langle\op(b) D^{-c} \d_x v_1 , \op(b) v_2 \rangle \\
	& = &
	- {\rm Re}\, \langle D^{-\sigma/2} \op(b) D^{-c-\sigma/2} \d_x D^{\sigma/2} v_1 , D^{\sigma/2} \op(b) v_2 \rangle .
\end{eqnarray*}

\noindent The operator $D^{-\sigma/2} \op(b) D^{-c-\sigma/2} \d_x$ has a symbol in the class $S \left( b \langle \cdot \rangle^{1 - c - \sigma} , g \right)$, which is embedded in $S(1,g)$ as soon as
\be
	\label{constraint.error}
	1 - c/2 - \sigma \leq 0 .
\ee

\noindent If the constraint is satisfied, the operator acts continuously on $L^2$, hence
\be
	\label{4.FII.1.F.I}
	\left|\eqref{4.FII.1}\right| \lesssim {\rm E_{1}}.
\ee

$\bullet$ Second, we focus on \eqref{4.FII.2}.
Here, equality \eqref{composition.crochet} of Lemma \ref{4.lemma.composition} and cancellations of brackets $\{b,b\} = 0$ and $\{b, a_{\natural}\} = 0$ imply that
$$
	\op\left( S^2 A_{\natural} \right) - \op(S)^2 \op(A_{\natural})
	=
	\begin{pmatrix}
		0 & 0 \\
		\op\left( S(\lambda^{-2},g) \right) & 0
	\end{pmatrix} .
$$

\noindent Hence
\begin{eqnarray*}
	{\rm Re}\, \langle \left( \op\left( S^2 A_{\natural} \right) - \op(S)^2 \op(A_{\natural}) \right) \d_x v, v \rangle
	& = &
	{\rm Re}\, \langle
		\begin{pmatrix}
			0 & 0 \\
			\op\left( S(\lambda^{-2},g) \right) & 0
		\end{pmatrix}
		\d_x v, v
	\rangle \\
	& = &
	{\rm Re}\, \langle \op\left( S(\lambda^{-2},g) \right) \d_x v_1, v_2 \rangle .
\end{eqnarray*}

\noindent Next, we proceed as we did in the previous point for \eqref{4.FII.1}.
As we control $ D^{\sigma/2} \op(b) v_2$ in $L^2$ norm, we use Lemma \ref{lemma.inversion} to make appear $\op(b)$ up to a remainder in $S(\lambda^{-(\nu-1)},g)$ for $\nu$ to be chosen later.
Hence there holds
\begin{align*}
	& {\rm Re}\, \langle \op\left( S(\lambda^{-2},g) \right) \d_x v_1, v_2 \rangle \\
	& = {\rm Re}\, \langle \op(c_{\nu})\op\left( S(\lambda^{-2},g) \right) \d_x v_1, \op(b)v_2 \rangle + {\rm Re}\, \langle \op\left( S(\lambda^{-2 - \nu-1},g) \right) \d_x v_1, v_2 \rangle \\
	& = {\rm Re}\, \langle D^{-\sigma/2}\op(c_{\nu})\op\left( S(\lambda^{-2},g) \right)D^{-\sigma/2} \d_x D^{\sigma/2} v_1, D^{\sigma/2}\op(b)v_2 \rangle + R_{2,2}
\end{align*}

\noindent with
$$
	R_{2,2} = {\rm Re}\, \langle \op\left( S(\lambda^{-3 - \nu},g) \right) \d_x v_1, v_2 \rangle .
$$

\noindent As $c_{\nu}$ is in $ S(b^{-1},g)$, we get
$$
	D^{-\sigma/2}\op(c_{\nu})\op\left( S(\lambda^{-2},g) \right) D^{-\sigma/2} \d_x = \op \left( S\left(\langle \cdot \rangle^{-1 - \sigma} b,g\right) \right) .
$$

\noindent Thanks to inequality \eqref{4.upper.bound.b}, the symbol satisfies $\langle \cdot \rangle^{-1 - \sigma} b \leq 1$ which implies the boundedness in $L^2$ of the previous operator, hence
$$
	\left| {\rm Re}\, \langle D^{-\sigma/2}\op(c_{\nu})\op\left( S(\lambda^{-2},g) \right)D^{-\sigma/2} \d_x D^{\sigma/2} v_1, D^{\sigma/2}\op(b)v_2 \rangle \right|
	\lesssim
	{\rm E_{1}} .
$$

We consider now the remainder term $R_{2,2}$, writing
\be
	\label{4.local.R22}
	R_{2,2} = {\rm Re}\, \langle \op\left( S\left( \langle \cdot \rangle^{1-\sigma} \lambda^{-3 - \nu},g \right) \right) D^{\sigma/2} v_1, D^{\sigma/2} v_2 \rangle
\ee

\noindent and both definition \eqref{4.def.lambda} of $\lambda$ and inequality \eqref{4.upper.bound.b} imply that
$$
	\op\left( S\left( \langle \cdot \rangle^{1-\sigma} \lambda^{-3 - \nu},g \right) \right) \subset \op\left( S\left( \langle \cdot \rangle^{-2-\nu-\sigma + c(3 + \nu)/2} ,g \right) \right) .
$$

\noindent As soon as the constraint
\be
	\label{4.constraint.2}
	\frac{c}{2} \leq 1 - \frac{1 - \sigma}{3 + \nu}
\ee

\noindent is satisfied, operators $\op\left( S\left( \langle \cdot \rangle^{1-\sigma} \lambda^{-3-\nu},g \right) \right)$ act thus continuously on $L^2$ thanks to Lemma \ref{4.lemma.action}.
Then there holds
$$
	\left| R_{2,2} \right| \lesssim {\rm E_{1}}
$$

\noindent and finally
\be
	\label{4.FII.2.F.I}
	\left| \eqref{4.FII.2} \right| \lesssim {\rm E_{1}}.
\ee

\begin{remark}
	Constraint \eqref{4.constraint.2} is essentially technical.
	Note that, as $\nu$ goes to infinity, constraint \eqref{4.constraint.2} becomes $c/2 <1$.
	This is the uncertainty principle for the metric $g$.

%
%
\end{remark}

\smallskip

$\bullet$ Third, we focus on \eqref{4.FII.3}.
Thanks to Lemma 5.2 in \cite{morisse2016j}, the symbol $\widetilde{a}$ satisfies
\be
	\label{4.expansion.tilde.a}
	\tilde{a} - a = i\d_{x}a \, \d_{\xi} \langle \xi \rangle^{\sigma} + S^{-2(1-\sigma)}_{1,0}
\ee

\noindent and we write thus
\begin{eqnarray}
	{\rm Re}\, \langle \op(S) \op(\widetilde{A} - A) \d_{x} v, \op(S)v \rangle
	& = &
	{\rm Re}\, \langle \op(S)
		\begin{pmatrix}
			0 & 0 \\
			\op(\widetilde{a}) - a & 0
		\end{pmatrix}
	\d_{x} v, \op(S)v \rangle \nonumber \\
	& = &
	{\rm Re}\, \langle \op(b) {\rm op}(\widetilde{a} - a) \d_{x} v_1, \op(b) v_2 \rangle \nonumber \\
	& = &
	{\rm Re}\, \langle \op(b) \op\left( i\d_{x}a \, \d_{\xi} \langle \cdot \rangle^{\sigma} \right) \d_{x} v_1, \op(b) v_2 \rangle + R_{2,3} \label{4.local.R23}
\end{eqnarray}

\noindent where
$$
	R_{2,3} = {\rm Re}\, \langle \op(b) {\rm op}\left( S^{-2(1-\sigma)}_{1,0} \right) \d_{x} v_1, \op(b) v_2 \rangle.
$$

The sub-principal symbol $i\d_{x}a \, \d_{\xi} \langle \xi \rangle^{\sigma}$ is \textit{a priori} in $S^{-(1-\sigma)}_{1,0}$, which would be insufficient to counterbalance both ${\rm op}(b)$ and $\d_x$.
Indeed, by Lemma \ref{4.lemma.b.symbol} and Lemma \ref{4.lemma.embedding}, there holds $\op(b) \d_x \in \op \left( S^{1 + c/2}_{1,c/2} \right)$ \textit{versus} the straigthforward estimate $ \op\left( i\d_{x}a \, \d_{\xi} \langle \xi \rangle^{\sigma} \right) \in \op \left( S^{-(1-\sigma)}_{1,0} \right) $.
But using the Glaeser inequality for $a$ described in Lemma \ref{4.lemma.glaeser.und.a} and definition \eqref{4.def.metric} of the metric $g$, we prove that in fact
\be
	\label{4.subprincipal}
	i\d_{x}a \, \d_{\xi} \langle \xi \rangle^{\sigma} \in S(b^{-1} \langle \cdot \rangle^{\sigma-1}, g) .
\ee

\noindent Indeed for any $\a$, $\beta$ in $\N$, there holds
\begin{eqnarray*}
	\left| \d_{x}^{\a} \d_{\xi}^{\beta} \left( i\d_{x}a(t,x) \, \d_{\xi} \langle \xi \rangle^{\sigma} \right) \right| & = & \left| \d_{x}^{\a+1}a(t,x) \, \d_{\xi}^{\beta+1} \langle \xi \rangle^{\sigma} \right| \\
		& \lesssim & \left| \d_{x}^{\a+1}a(t,x) \right| \, \langle \xi \rangle^{\sigma-1-|\beta|} .
\end{eqnarray*}

\noindent The lower bound \eqref{4.lower.bound.b} for $b$ implies
$$
	\left| \d_{x}^{\a+1}a(t,x) \right| \leq \left| \d_{x}^{\a+1}a \right|_{L^{\infty}([0,T]\times B_{r}(x_0)} b^{-1 + |\a|} \quad , \quad \forall \,(t,x) \in [0,T]\times B_{r}(x_0) .
$$

\noindent For $|\a| = 0$, Glaeser's Lemma \ref{4.lemma.glaeser.und.a} and definition \eqref{4.def.b} of $b$ lead to
\begin{eqnarray*}
	\left| \d_{x}^{\a+1}a(t,x) \right| & \lesssim & \left| \d_{x}^{2}a \right|_{L^{\infty}([0,T]\times B_{r}(x_0)}^{1/2} a(t,x)^{1/2} \\
		& \lesssim & \left| \d_{x}^{2}a \right|_{L^{\infty}([0,T]\times B_{r}(x_0)}^{1/2} \left(a(t,x) + \langle \xi \rangle^{-c} \right)^{1/2} \\
		& \lesssim & \left| \d_{x}^{2}a \right|_{L^{\infty}([0,T]\times B_{r}(x_0)}^{1/2} b(t,x,\xi)^{-1+|\a|}
\end{eqnarray*}

\noindent for any $(t,x,\xi) \in [0,T] \times B_{r}(x_0) \times \R$. Thus, for any $\a\in\N$, there is $C_{\a} >0$ such that
$$
	\left| \d_{x}^{\a} \d_{\xi}^{\beta} \left( i\d_{x}a(t,x) \, \d_{\xi} \langle \xi \rangle^{\sigma} \right) \right| \lesssim C_{\a} \,b(t,x,\xi)^{-1 + |\a|} \, \langle \xi \rangle^{\sigma-1-|\beta|} \quad , \quad \forall \, (t,x,\xi) \in [0,T] \times B_{r}(x_0) \times \R
$$

\noindent combining both cases, hence the proof of \eqref{4.subprincipal}.

For the first term in the right-hand side of \eqref{4.local.R23}, we follow the same path as in the above treatment of \eqref{4.FII.1} and \eqref{4.FII.2}, writing
\begin{align*}
	& {\rm Re}\, \langle \op(b) \op\left( i\d_{x}a \, \d_{\xi} \langle \cdot \rangle^{\sigma} \right) \d_{x} v_1, \op(b) v_2 \rangle \\
	& =
	{\rm Re}\, \langle D^{-\sigma/2} \op(b) \op\left( i\d_{x}a \, \d_{\xi} \langle \cdot \rangle^{\sigma} \right) D^{-\sigma/2}\d_{x} \, D^{\sigma/2} v_1, D^{\sigma/2} \op(b) v_2 \rangle \\
	& =
	{\rm Re}\, \langle \op\left( S\left( 1 , g \right) \right) \, D^{\sigma/2} v_1, D^{\sigma/2} \op(b) v_2 \rangle .
\end{align*}

\noindent Hence, by Lemma \ref{4.lemma.action},
$$
	\left| {\rm Re}\, \langle \op(b) \op\left( i\d_{x}a \, \d_{\xi} \langle \cdot \rangle^{\sigma} \right) \d_{x} v_1, \op(b) v_2 \rangle \right| \lesssim {\rm E_{1}} .
$$

For the remainder term $R_{2,3}$, there holds
$$
	{\rm Re}\, \langle \op(b) \op\left( S_{1,0}^{-2(1-\sigma)} \right) \d_{x} v_1, \op(b) v_2 \rangle
	=
	{\rm Re}\, \langle \op\left( S \left( b \langle \cdot \rangle^{-(1-\sigma)} , g \right) \right) D^{\sigma/2} v_1, D^{\sigma/2} \op(b) v_2 \rangle .
$$

\noindent Thanks to inequality \eqref{4.upper.bound.b} on $b$, we prove $b \langle \cdot \rangle^{-(1-\sigma)} \leq \langle \cdot \rangle^{c/2 + \sigma-1}$ which implies
$$
	\op\left( S \left( b \langle \cdot \rangle^{-(1-\sigma)} , g \right) \right) \subset \op\left( S \left( \langle \cdot \rangle^{c/2 + \sigma-1} , g \right) \right) .
$$

\noindent Hence, as soon as
\be
	\label{4.constraint.3}
	c/2 + \sigma - 1 \leq 0
\ee

\noindent holds, operators $\op\left( S \left( b \langle \cdot \rangle^{-(1-\sigma)} , g \right) \right)$ act on $L^2$ thanks to Lemma \ref{4.lemma.action}, thus
\be
	\label{4.FII.3.F.I}
	\left| \eqref{4.FII.3} \right| \lesssim {\rm E_{1}}
\ee

\noindent using again Lemma \ref{4.lemma.action}.

Putting together estimates \eqref{4.FII.1.F.I}, \eqref{4.FII.2.F.I} and \eqref{4.FII.3.F.I}, there is a constant $C_{{\rm 2}} >0$ such that
\be
	\label{4.def.CII}
	\left| {\rm E_{2}} \right| \leq C_{{\rm 2}} \, {\rm E_{1}}.
\ee

\begin{remark}
	The discussion before estimate \eqref{4.subprincipal} on the subprincipal symbol of $\tilde{a}$ stresses out the importance of a careful study of subprincipal symbols when dealing with weakly hyperbolic systems.
	It echoes the considerations and computations which lead to equality \eqref{subprincipal.dtb} for $\d_t b$.
\end{remark}

\subsubsection{Estimate of ${\rm E_{3}}$}

We proceed as before, focusing first on the first term of the right-hand side of inequality \eqref{4.FIII.1}.
Using Lemma \ref{lemma.inversion}, we write
\begin{align*}
	& {\rm Re}\, \langle \op \left( S(b^5 \langle \cdot \rangle^{-c-1} , g) \right)  v_2, \op(b) v_2 \rangle \\
	& =
	{\rm Re}\, \langle D^{-\sigma/2} \op \left( S(b^5 \langle \cdot \rangle^{-c-1} , g) \right) \op\left(c_{\nu}\right) D^{-\sigma/2}\, D^{\sigma/2} \op(b) v_2, D^{\sigma/2} \op(b) v_2 \rangle \\
	&
	\quad + {\rm Re}\, \langle D^{-\sigma/2} \op \left( S(b^5 \langle \cdot \rangle^{-c-1} , g) \right) \op \left( S \left( \lambda^{-\nu-1} , g \right) \right) v_2, D^{\sigma/2}\op(b) v_2 \rangle \\
	& =
	{\rm Re}\, \langle \op \left( S(b^4 \langle \cdot \rangle^{-\sigma-c-1} , g) \right) \, D^{\sigma/2} \op(b) v_2, D^{\sigma/2} \op(b) v_2 \rangle \\
	&
	\quad + {\rm Re}\, \langle \op\left( S\left( b^5 \langle \cdot \rangle^{-\sigma-c-1} \lambda^{-\nu-1} , g\right) \right) D^{\sigma/2} v_2, D^{\sigma/2}\op(b) v_2 \rangle .
\end{align*}

\noindent Following the scheme developed in the previous points, we use inequality \eqref{4.lower.bound.b} to get both bounds
$$
	b^4 \langle \cdot \rangle^{-\sigma-c-1}
	\leq
	\langle \cdot \rangle^{c - \sigma -1}
$$

\noindent and
$$
	b^5 \langle \cdot \rangle^{-\sigma-c-1} \lambda^{-\nu-1}
	=
	b^{6+\nu} \langle \cdot \rangle^{-\sigma-c-2-\nu}
	\leq
	\langle \cdot \rangle^{c(4+\nu)/2-\sigma-\nu-2} .
$$

\noindent To use Lemma \ref{4.lemma.action} for $L^2$-boundedness of the associated operators, we need constraints
\be
	\label{constraint.dtb}
	c - \sigma -1
	\leq
	0
\ee

\noindent and
\be
	\label{constraint.dtb.remainder}
	c(4+\nu)/2-\sigma-\nu-2
	\leq
	0.
\ee

\noindent As soon as both constraints are satisfied, by inequality \eqref{4.FIII.1}, there is $C_{{\rm 3}} >0$ such that
\be
	\label{4.def.CIII}
	\left| {\rm E_{3}} \right| \leq C_{{\rm 3}} \, {\rm E_{1}} .
\ee

\subsubsection{Estimate of ${\rm E_{4}}$}

We consider now the non-linear, 0th order term ${\rm E_{4}}$.
Without any additional assumption on the (matrix) structure of the non-linearity, the control of the source term may lead to another constraint linking $c$ and $\sigma$.
In particular, however we control $D^{\sigma/2} \op(b) v_2$ in an $L^2$ norm, the term ${\rm op}(b) \left( F(u)^{(\tau)} v \right)_2$ in ${\rm E_{4}}$ may not be controlled in the same way.
We thus have to bound the operator $\op(b)$ using $D^{-\sigma}$.

We write first, as before,
\begin{align}
	& {\rm Re}\, \langle {\rm op}(b) \left( F(u)^{(\tau)} v \right)_2, {\rm op}(b) v_2 \rangle + {\rm Re} \, \langle \left( F(u)^{(\tau)} v \right)_1 , v_1 \rangle \nonumber \\
	& = {\rm Re}\, \langle D^{-\sigma/2} {\rm op}(b)D^{-\sigma/2} \, D^{\sigma/2}\left( F(u)^{(\tau)} v \right)_2, D^{\sigma/2} {\rm op}(b) v_2 \rangle + {\rm Re} \, \langle \left( F(u)^{(\tau)} v \right)_1 , v_1 \rangle \label{4.local.E4}
\end{align}

\noindent Next, by Lemma \ref{4.lemma.embedding} and Lemma \ref{4.lemma.composition}, there holds
$$
	D^{-\sigma/2}\op(b) D^{-\sigma/2} \in \op \left( S\left( b \langle \cdot \rangle^{-\sigma}, g \right) \right) .
$$

\noindent Thanks to the upper bound \eqref{4.upper.bound.b} for $b$, we get
$$
	\op \left( S\left( b \langle \cdot \rangle^{-\sigma}, g \right) \right) \subset \op \left( S\left( \langle \cdot \rangle^{c/2-\sigma}, g \right) \right) .
$$

\noindent As soon as the constraint
\be
	\label{constraint.nonlinear}
	c/2 - \sigma \leq 0
\ee

\noindent is satisfied, operators $\op \left( S\left( \langle \cdot \rangle^{c/2-\sigma}, g \right) \right)$ act continuously on $L^2$ thanks to Lemma \ref{4.lemma.action}.
This implies
$$
	\left| D^{-\sigma/2} {\rm op}(b)D^{-\sigma/2} \, D^{\sigma/2}\left( F(u)^{(\tau)} v \right)_2 \right|_{L^2} \lesssim \left| D^{\sigma/2} \left( F(u)^{(\tau)} v \right)_2  \right|_{L^2} .
$$

\begin{remark}
	\label{remark.hypo.nonlinear}
	Constraint \eqref{constraint.nonlinear} comes from control of the term $\op(b) \left( F(u)^{(\tau)} v \right)_2$, which decomposes into
	$$
		\op(b) \left( F(u)^{(\tau)} v \right)_2 = \op(b) F(u)_{21}^{(\tau)} v_1  + \op(b) F(u)_{22}^{(\tau)} v_2 .
	$$

	\noindent At a first level of approximation, there holds
	$$
		\op(b) \left( F(u)^{(\tau)} v \right)_2 \approx F(u)_{21}^{(\tau)} \op(b) v_1  + F(u)_{22}^{(\tau)} \op(b) v_2 .
	$$

	\noindent The second term of the right-hand side may be controlled directly by the term ${\rm E_{1}}$, but not the first term, as $\op(b) v_1$ is not a priori controlled in a $H^{\sigma/2}$ norm.

	Adding the structural assumption that $F(u)_{21} \equiv 0$ may then help loosen the constraint \eqref{constraint.nonlinear}, and in the end the lower bound on the Gevrey index.
	This is typical of weakly hyperbolic systems: a perturbation by a lower order term may induce a Gevrey loss of regularity.
	A careful analysis of subprincipal symbol involving the approximated symbol $a_{\natural}$ is thus of great importance.
\end{remark}

The control of non-linearity is made thanks to the property of algebra of Gevrey spaces, and the analytical structure of $F(u)$.
As $u$ is in ${\rm \mathcal{G}}^{\sigma}_{\tau}$, Assumption \ref{hypo.reg.F} and the property that $H^{\sigma/2}{\rm \mathcal{G}}^{\sigma}_{\tau}$ is an algebra thanks to Remark 3 in \cite{morisse2016j}, Proposition 3.2 therein implies that $F(u)^{(\tau)}$ acts continuously in $H^{\sigma/2}$, hence
$$
	\left| D^{\sigma/2} \left( F(u)^{(\tau)} v \right)_2  \right|_{L^2} \lesssim \left\| F(u)^{(\tau)} \right\|_{\mathcal{L}(H^{\sigma/2})} \left| D^{\sigma/2} v\right|_{L^2}.
$$

\noindent Using Cauchy-Schwarz' inequality to get an estimate of \eqref{4.local.E4}, there holds
\begin{align*}
	& \left| \eqref{4.local.E4} \right| \\
	& \lesssim \left| D^{-\sigma/2} {\rm op}(b) \left( F(u)^{(\tau)} v \right)_2\right|_{L^2} \, \left| {\rm op}(b) D^{\sigma/2} v_2 \right|_{L^2} + \left| \left( F(u)^{(\tau)} v \right)_1 \right|_{L^2} \, \left| v_1 \right|_{L^2} \\
	& \lesssim \left\| F(u)^{(\tau)} \right\|_{\mathcal{L}(H^{\sigma/2})} \left| D^{\sigma/2} v\right|_{L^2} \, \left| {\rm op}(b) D^{\sigma/2} v_2 \right|_{L^2} + \left\| F(u)^{(\tau)} \right\|_{\mathcal{L}(L^2)} \, \left| v_1 \right|_{L^2}^2 \\
	& \lesssim \left( \left\| F(u)^{(\tau)} \right\|_{\mathcal{L}(H^{\sigma/2})} + \left\| F(u)^{(\tau)} \right\|_{\mathcal{L}(L^2)} \right){\rm E_{1}} .
\end{align*}

We conclude by
\be
	\label{4.def.CIV}
	\left|{\rm E_{4}} \right| \leq C_{{\rm 4}} \, {\rm E_{1}}
\ee

\noindent for some $C_{{\rm 4}}>0$ depending essentially on $\left\| F(u)^{(\tau)} \right\|_{\mathcal{L}(H^{\sigma/2})} + \left\| F(u)^{(\tau)} \right\|_{\mathcal{L}(L^2)}$.

\subsubsection{Conclusion}

In order to complete the proof of Theorem \ref{4.theo}, we put together the different constraints between $c$ and $\sigma$ that appear in the estimates of the energy.
First, combining constraint \eqref{constraint.error} and constraint \eqref{4.constraint.3}, there holds
$$
	\frac{c}{2} = 1 - \sigma .
$$

\noindent This equality between the parameter $c$, used to regularize the weakness in the hyperbolicity of the system, and the Gevrey index $\sigma$ already appeared in the seminal paper \cite{colombini1983well}.

Next, we gather constraints \eqref{4.constraint.2}, \eqref{constraint.dtb}, \eqref{constraint.dtb.remainder} and \eqref{constraint.nonlinear}.
The last constraint implies immediately the expected lower bound for the Gevrey index
\be
	\label{4.sigma.demi}
	\sigma \geq 1/2 .
\ee

\noindent The three other constraints are weaker, hence do not interfere in the lower bound.
However, as soon as constraint \eqref{constraint.nonlinear} breaks down (see Remark \ref{remark.hypo.nonlinear}), the lower bound for the Gevrey index is better.
Indeed, constraint \eqref{constraint.dtb} implies the inequality $ 1 - \sigma \leq \frac{1}{2}(1 + \sigma )$, equivalent to
\be
	\sigma \geq \frac{1}{3} .
\ee

\noindent Note that both constraints \eqref{4.constraint.2} and \eqref{constraint.dtb.remainder} are equivalent, as $\nu$ tends to infinity, to $c/2 \leq 1$ -- which is the limitation of $c$ imposed by the uncertainty principle for the metric $g$.

\smallbreak

We prove Theorem \ref{4.theo} by taking ${\bm \tau} > C_{{\rm 2}} + C_{{\rm 3}} + C_{{\rm 4}}$, where the constants are defined respectively in \eqref{4.def.CII}, \eqref{4.def.CIII} and \eqref{4.def.CIV}.

\section{Appendices: two lemmas of real analysis and metrics in the phase space}


\subsection{Glaeser-type inequalities}
\label{4.subsection.glaeser}


We start by recalling the Fa\`a di Bruno formula on iterated derivatives of composition of functions:

\begin{lemma}[Fa\`a di Bruno formula]
	\label{4.lemma.faa}
	Let $f: \R^{d}\times\R^{d} \to \R$ and $g:\R \to \R$ be two $C^{\infty}$ functions. Then
	\be
		\forall\, \alpha , \beta \in \mathbb{N}^d
		\quad , \quad
		\frac{ \d_{x}^{\a}\d_{\xi}^{\beta}(g\circ f) }{ \a ! \beta ! } = \sum_{1 \leq k \leq |\a + \beta|} \frac{ g^{(k)}\circ f}{k!} \sum_{ (\a_1,\beta_1) + \cdots + (\a_k,\beta_k) = (\a,\beta) \atop (\a_j,\beta_j)\neq(0,0)} \prod_{1\leq j \leq k} \frac{ \d_{x}^{\a_j}\d_{\xi}^{\beta_j}f }{ \a_j ! \beta_j ! } .
	\ee
\end{lemma}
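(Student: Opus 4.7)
The plan is to derive the identity from a formal Taylor expansion of $g \circ f$ around a fixed point $(x,\xi)$, which recasts the combinatorial identity as the equality of two expressions for the same monomial coefficient.

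First, I would introduce the shifted function $(y,\eta) \mapsto g(f(x+y,\xi+\eta))$ and compute the coefficient of $y^\alpha \eta^\beta$ in its expansion at $(y,\eta)=0$ in two different ways. By Taylor's formula, this coefficient equals
\[
\frac{\partial_x^\alpha \partial_\xi^\beta (g\circ f)(x,\xi)}{\alpha!\,\beta!},
\]
which is the left-hand side of the claim. On the other hand, combining the Taylor expansion
\[
f(x+y,\xi+\eta) - f(x,\xi) = \sum_{(\gamma,\delta)\neq (0,0)} \frac{\partial_x^\gamma \partial_\xi^\delta f(x,\xi)}{\gamma!\,\delta!}\, y^\gamma \eta^\delta
\]
with $g(z+h) = \sum_{k\geq 0} \frac{g^{(k)}(z)}{k!} h^k$ applied at $z = f(x,\xi)$ yields
\[
g(f(x+y,\xi+\eta)) = \sum_{k\geq 0} \frac{g^{(k)}(f(x,\xi))}{k!} \Biggl( \sum_{(\gamma,\delta)\neq 0} \frac{\partial_x^\gamma \partial_\xi^\delta f(x,\xi)}{\gamma!\,\delta!}\, y^\gamma \eta^\delta \Biggr)^{\!k}.
\]

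Next, I would expand the $k$-th power by the multinomial rule and extract the $y^\alpha \eta^\beta$ coefficient. Each monomial in the expanded $k$-th power reads $\prod_{j=1}^k \frac{\partial_x^{\gamma_j}\partial_\xi^{\delta_j} f}{\gamma_j!\,\delta_j!}\, y^{\gamma_j}\eta^{\delta_j}$ for an ordered tuple $((\gamma_1,\delta_1),\ldots,(\gamma_k,\delta_k))$ of nonzero multi-indices, so the $y^\alpha \eta^\beta$ coefficient is precisely the sum over such tuples summing to $(\alpha,\beta)$. Since each $(\gamma_j,\delta_j)$ contributes at least $1$ to $|\alpha+\beta|$, the $k$-sum truncates automatically at $k \leq |\alpha+\beta|$. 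Matching the two coefficient expressions reproduces the claimed identity.

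The only subtlety is that for a general $C^\infty$ pair $(f,g)$ the Taylor series need not converge. To make this rigorous I would either replace both expansions by Taylor polynomials of order $|\alpha+\beta|$ together with their Peano remainders (the remainders contribute only to monomials of strictly higher order and so do not affect the coefficient of $y^\alpha \eta^\beta$), or note that the formula is a universal polynomial identity in the abstract variables $g^{(k)}(f)$ and $\partial_x^\gamma \partial_\xi^\delta f$ with $|\gamma+\delta|\leq |\alpha+\beta|$, so it suffices to verify it on polynomial $f$ and $g$, for which the Taylor series are exact. An equivalent but heavier alternative is induction on $|\alpha+\beta|$: the base case is the chain rule, and the inductive step applies one extra derivative and uses the Leibniz rule on the $k$-fold product, rearranging terms into those that raise some $|\gamma_j|+|\delta_j|$ by one and those that append a new $(\gamma_{k+1},\delta_{k+1})$ of total order one. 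Either way, the main obstacle is purely bookkeeping: ensuring that each ordered tuple on the right-hand side is counted exactly once with the correct multinomial weight.
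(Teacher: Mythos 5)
Your proof is correct; the Taylor-expansion argument you give is the standard derivation of the Faà di Bruno formula, and you correctly track that the sum on the right is over \emph{ordered} $k$-tuples of nonzero multi-indices (which is what the displayed formula's index set $(\a_1,\beta_1)+\cdots+(\a_k,\beta_k)=(\a,\beta)$, $(\a_j,\beta_j)\neq(0,0)$ denotes). The remark about passing to Taylor polynomials with Peano remainders, or equivalently treating the identity as a universal polynomial identity verifiable on polynomials, is exactly the right way to dispose of the convergence issue for general $C^\infty$ data. For what it is worth, the paper itself does not supply a proof of this lemma — it is quoted as a classical fact in the appendix — so there is no authorial proof to compare against; your argument fills that gap correctly.
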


We recall that for a $d$-tuple $\a_j = (\a_j(1), \ldots,\a_j(d))$, we denote $\a_j! = \prod_{1\leq p \leq d} \a_j(p)!$, and $\d_{x}^{\a_j}$ means $\d_{x_1}^{\a_j(1)} \circ \cdots \circ \d_{x_d}^{\a_j(d)}$.
For further use, we denote
\be
	N(\a,k) = \left| \left\{ (\a_1, \ldots , \a_k) \,\Big| \, \a_1 + \cdots + \a_k = \a \, , \, \a_j \geq 1 \right\} \right|.
\ee

\noindent By combinatorial arguments, for all $\alpha \in \N^{d}$ and $k\geq 1$ there holds
$$
	N(\a , k) =  \prod_{1\leq j \leq d} \binom{\a(j) - 1}{k-1} .
$$

\noindent By putting $f(y) = y^{n}$ and $g(x) = e^{x}$ in the Fa\`a di Bruno formula, we obtain
$$
	n^{\a} = \sum_{1 \leq k \leq |\a|} \binom{n}{k} \sum_{\a_1 + \cdots + \a_k = \a \atop \a_j \geq 1} \binom{\a}{\a_1 , \ldots , \a_k} .
$$

\smallskip

Next we recall the classical Glaeser inequality (see \cite{glaeser1963racine}):
\begin{lemma}[Global Glaeser inequality]
	\label{4.lemma.glaeser.global}
	Let $f:\R^{n} \to \R$ be a non negative $C^2$ function, such that $\d_x^2f$ is bounded.
	Then
	\be
		\label{4.ineq.glaeser.global}
		\forall x\in\R^{n} \;,\quad |\d_{x}f(x)|^2 \leq 2|\d_x^2f|_{L^{\infty}(\R^{n})} f(x) .
	\ee
\end{lemma}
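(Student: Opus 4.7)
The plan is to reduce to the one-dimensional case by slicing along a fixed direction, then exploit the non-negativity of $f$ together with a second-order Taylor expansion to obtain a quadratic inequality whose discriminant must be nonpositive.

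More precisely, fix $x \in \R^n$ and a unit vector $v \in \R^n$. I would first introduce the auxiliary function $\phi(t) = f(x+tv)$ for $t \in \R$. Then $\phi$ inherits from $f$ the property of being nonnegative and $C^2$, with
\[
\phi'(0) = \partial_x f(x) \cdot v, \qquad |\phi''(t)| = |\partial_x^2 f(x+tv)(v,v)| \leq |\partial_x^2 f|_{L^\infty(\R^n)}
\]
uniformly in $t$, using that $|v|=1$.

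Next, I would apply Taylor's formula with Lagrange remainder to $\phi$ at $t=0$: for any $h \in \R$, there is $\theta \in (0,1)$ such that
\[
\phi(h) = \phi(0) + \phi'(0)\, h + \tfrac{1}{2}\phi''(\theta h)\, h^2.
\]
Since $\phi \geq 0$ and $\phi''(\theta h) \leq |\partial_x^2 f|_{L^\infty}$, this yields
\[
0 \leq \phi(0) + \phi'(0)\, h + \tfrac{1}{2}|\partial_x^2 f|_{L^\infty}\, h^2 \qquad \forall\, h \in \R.
\]
The right-hand side is a quadratic polynomial in $h$ that is nonnegative on the whole real line, so its discriminant must be nonpositive:
\[
\phi'(0)^2 - 2\,|\partial_x^2 f|_{L^\infty}\, \phi(0) \leq 0,
\]
that is, $|\partial_x f(x) \cdot v|^2 \leq 2\,|\partial_x^2 f|_{L^\infty} f(x)$.

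Finally, taking the supremum over unit vectors $v$ (equivalently, choosing $v = \partial_x f(x)/|\partial_x f(x)|$ when the gradient does not vanish, and observing that the inequality is trivial otherwise) delivers precisely \eqref{4.ineq.glaeser.global}. There is no real obstacle here: the key structural input is that the hypotheses hold \emph{globally} on $\R^n$, so the quadratic-in-$h$ bound is valid for arbitrarily large $|h|$, which is what forces the discriminant condition. The multidimensional hypothesis enters only through the uniform bound on $|\phi''|$ along every line, which follows immediately from $|\partial_x^2 f|_{L^\infty(\R^n)} < \infty$.
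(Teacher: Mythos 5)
Your proof is correct, and it is essentially the classical argument the paper invokes: slice along a direction, Taylor-expand to second order, and use global nonnegativity of $f$ together with the uniform Hessian bound to force the discriminant of the resulting quadratic in the step $h$ to be nonpositive. The paper mentions the integral form of the Taylor remainder rather than the Lagrange mean-value form you use, but the two give the same quadratic upper bound $\phi(0) + \phi'(0)h + \tfrac{1}{2}\lvert\partial_x^2 f\rvert_{L^\infty}h^2 \geq 0$ and the argument is otherwise identical.
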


The local result (inequality holds at any point) comes from a global assumption on $f$ (non negativity of $f$, boundedness of $\d_x^2f$).
The constant $2|\d_x^2f|_{L^{\infty}(\R^{n})}$ is optimal.
The proof of the Lemma is classical, and is based on the integral Taylor expansion formula.

Local versions of the previous statement, that is with assumptions valid only in an open set of $\R^{n}$, also exist.
For any $x_0 \in\R^d$ and $r>0$, we denote
$$
	B_{r}(x_0) = \left\{ x \in \R^d \,:\, |x - x_0| < r \right\} .
$$

\noindent In all the following, we consider $f : B_r(x_0) \to \R$ a nonnegative, $C^2$ function.
We give first a sharp version of a local Glaeser's inequality, used in the present paper.
\begin{lemma}[Sharp local Glaeser inequality]
	\label{4.lemma.sharp.local.glaeser}
	$ {}^{} $

	Assuming that
	$$
		\min_{x\in \overline{B}_{r}(x_0)} f(x) >0
	$$

	\noindent then, for any $p>0$ and any $r' < r$, there holds
	\be
		\label{local.trucmuche}
		\forall x\in \overline{B}_{r'}(x_0) \;,\quad
		|\d_{x}f(x)|^{p} \leq \frac{\left(|\d_{x}f|_{L^{\infty}(\overline{B}_{r'}(x_0))}\right)^{p} }{\min_{\overline{B}_{r'}(x_0)} f} f(x) .
	\ee
\end{lemma}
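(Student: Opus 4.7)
The plan is to observe that, under the stated hypothesis, the inequality is essentially immediate: the positivity of $f$ on the compact ball $\overline{B}_{r'}(x_0)$ means the ratio $|\d_x f(x)|^p / f(x)$ is well-defined and uniformly bounded, so no Taylor expansion or genuine Glaeser argument is needed.

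More concretely, I would proceed in three short steps. First, note that since $r' < r$, one has $\overline{B}_{r'}(x_0) \subset B_r(x_0) \subset \overline{B}_r(x_0)$, so by the hypothesis $\min_{\overline{B}_r(x_0)} f > 0$ we also get $\min_{\overline{B}_{r'}(x_0)} f > 0$ and the right-hand side of \eqref{local.trucmuche} makes sense. Second, for any $x \in \overline{B}_{r'}(x_0)$ one trivially has $|\d_x f(x)|^p \leq \bigl(|\d_x f|_{L^\infty(\overline{B}_{r'}(x_0))}\bigr)^p$. Third, multiply and divide by $m := \min_{\overline{B}_{r'}(x_0)} f > 0$ and use $f(x) \geq m$ to replace $m$ by $f(x)$ in the numerator, producing exactly the claimed inequality.

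There is no real obstacle here: the interesting content of the Glaeser-type inequalities is when $f$ is allowed to vanish, in which case one needs the $C^2$ regularity, a bound on $\d_x^2 f$, and an integral Taylor expansion as in Lemma \ref{4.lemma.glaeser.global}. Under the stronger assumption $\min_{\overline{B}_r(x_0)} f > 0$ made in the present statement, the inequality reduces to the tautology $|\d_x f|^p \leq \|\d_x f\|_\infty^p \cdot (f/m)$. The only minor point to keep track of is that the $L^\infty$ norms and the minimum are taken over $\overline{B}_{r'}(x_0)$ rather than $\overline{B}_r(x_0)$, which is where the strict inequality $r' < r$ is used (to ensure we work on a compact subset where the continuous function $f$ attains its positive minimum and $\d_x f$ is bounded, by $C^2$ regularity of $f$ on $B_r(x_0)$).
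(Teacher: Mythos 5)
Your proof is correct and follows essentially the same route as the paper: both use the two elementary bounds $\min_{\overline{B}_{r'}(x_0)} f \leq f(x)$ and $|\d_x f(x)|^p \leq \bigl(|\d_x f|_{L^\infty(\overline{B}_{r'}(x_0))}\bigr)^p$, then combine them. The paper's proof is a two-line version of exactly this observation.
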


\begin{proof}
	The inequality \eqref{local.trucmuche} is straightforward as there holds both
	$$
		\forall x\in \overline{B}_{r'}(x_0) \;,\quad \min_{\overline{B}_{r'}(x_0)} f \leq f(x)
	$$

	\noindent and
	$$
		 \forall x\in \overline{B}_{r'}(x_0) \;,\quad |\d_{x}f(x)|^{p} \leq \left(|\d_{x}f|_{L^{\infty}(\overline{B}_{r'}(x_0))}\right)^{p} .
	$$
\end{proof}

\begin{remark}
	Note that in this case, the Glaeser constant does not depend a priori of the $L^{\infty}$ norm of the second order derivatives of $f$.
	We may indeed think of polynomials of degree $2$ which are locally bounded from below by a positive constant and have a positive discriminant.
\end{remark}

Using Lemma \ref{4.lemma.sharp.local.glaeser}, we prove here Lemmas \ref{4.lemma.glaeser.und.a} and \ref{4.lemma.derivatives.b}.
\begin{lemma}[Glaeser inequality for $a$]
	Under Assumption {\rm \ref{4.hypo.A}}, there is a neighborhood $[0,T]\times B_{r}(x_0)$ of $(0,x_0)\in\R_{t}\times\R_{x}$ and a constant $C_{T,r}>0$ for which there holds
	\be
		\forall \,(t,x)\in [0,T]\times B_{r}(x_0) \quad , \quad \left( \d_{x}a(t,x) \right)^2 \leq C_{T,r} \,a(t,x) .
	\ee
\end{lemma}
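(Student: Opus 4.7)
The plan is to exploit directly the factorization $a(t,x) = (t + |x-x_0|^2) e(t,x)$ provided by Assumption \ref{4.hypo.A}, rather than invoking one of the abstract Glaeser inequalities. The point is that each summand appearing when $\partial_x$ is distributed across this product can be matched, thanks to the sign/size conditions on $e$ and the nonnegativity of $t$ and $|x-x_0|^2$, by a factor of $a$ itself.

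First I would compute
$$
\partial_x a(t,x) \;=\; 2(x-x_0)\, e(t,x) \;+\; \bigl(t + |x-x_0|^2\bigr)\, \partial_x e(t,x)
$$
and square, using $(A+B)^2 \leq 2A^2 + 2B^2$, to reduce the claim to bounding the two terms
$$
|x-x_0|^2\, e(t,x)^2 \quad \text{and} \quad \bigl(t+|x-x_0|^2\bigr)^2 \,|\partial_x e(t,x)|^2
$$
by $a(t,x)$ up to a constant.

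For the first term I would use the upper bound $e \leq 2$ from \eqref{4.ineq.e}, giving $e^2 \leq 2e$, together with $|x-x_0|^2 \leq t + |x-x_0|^2$ (valid since $t \geq 0$), to obtain
$$
|x-x_0|^2\, e(t,x)^2 \;\leq\; 2\,\bigl(t+|x-x_0|^2\bigr)\,e(t,x) \;=\; 2\, a(t,x).
$$
For the second term, the lower bound $e \geq 1/2$ in \eqref{4.ineq.e} gives $t + |x-x_0|^2 = a(t,x)/e(t,x) \leq 2\, a(t,x)$; combining this with the crude bound $t + |x-x_0|^2 \leq T + r^2$ on $[0,T]\times B_r(x_0)$ and the finiteness of $|\partial_x e|_{L^\infty([0,T]\times B_r(x_0))}$ (which follows from the Gevrey regularity in Assumption \ref{4.hypo.A} and the compactness of the domain), I get
$$
\bigl(t+|x-x_0|^2\bigr)^2\, |\partial_x e(t,x)|^2 \;\leq\; 2\,(T+r^2)\,|\partial_x e|_{L^\infty}^2\; a(t,x).
$$

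Putting the two estimates together yields $(\partial_x a(t,x))^2 \leq C_{T,r}\, a(t,x)$ on $[0,T]\times B_r(x_0)$, with $C_{T,r}$ depending only on $T$, $r$, and the sup of $|\partial_x e|$ on the closed cylinder. There is no real obstacle: the nonnegativity of $t$, the two-sided bound on $e$, and the compactness of the domain do all of the work, and the classical Glaeser inequality (Lemma \ref{4.lemma.glaeser.global}) is not actually required for this specific linear-in-$t$ plus quadratic-in-$x$ structure. The only point to keep an eye on is using both the upper and the lower bound on $e$ in the right places, so that the two contributions are matched respectively by $|x-x_0|^2 e \leq a$ and $t+|x-x_0|^2 \leq 2 a$.
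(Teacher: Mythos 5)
Your proof is correct and follows essentially the same route as the paper's: both differentiate the factorization $a(t,x)=(t+|x-x_0|^2)e(t,x)$ from Assumption \ref{4.hypo.A} and match each resulting piece against $a$ using $|x-x_0|^2 \leq t+|x-x_0|^2$, the two-sided bound \eqref{4.ineq.e} on $e$, and boundedness of $\partial_x e$ on the compact cylinder. The paper factors $a(t,x)$ out of $(\partial_x a)^2$ as a whole, cross term included, whereas you split via $(A+B)^2\le 2A^2+2B^2$ — a purely cosmetic difference in bookkeeping; your presentation also makes explicit that the abstract Glaeser-type Lemma \ref{4.lemma.sharp.local.glaeser} cited in the paper's one-line proof is not actually needed for this specific structure.
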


\begin{proof}[Proof of Lemma {\rm \ref{4.lemma.glaeser.und.a}}]
	Thanks to Assumption {\rm \ref{4.hypo.A}}, there holds
	$$
		\left( \d_{x} a \right)^2 = a(t,x) \left( \frac{4x^2}{t+x^2}e + (t+x^2) \frac{ \left( \d_{x}e \right)^2 }{e} + 4x \d_{x}e \right)
	$$

	\noindent and the term $\left( \frac{4x^2}{t+x^2}e + (t+x^2) \frac{ \left( \d_{x}e \right)^2 }{e} + 4x \d_{x}e \right) $ is locally bounded thanks to Lemma \ref{4.lemma.sharp.local.glaeser}.

\end{proof}

\medskip

\begin{lemma}[Derivatives of the symbol $b$]
	We recall first definition \eqref{4.def.b} of $b$:
	$$
		b(t,x,\xi) = \left( a(t,x) + \langle \xi \rangle^{-c} \right)^{-1/2} .
	$$

	\noindent There is a bounded sequence of constants $C_{\a,\beta}>0$ for which there holds
	\be
		\forall \,(\a,\beta) \in \N \times \N \;, \quad
		|\d_{x}^{\a} \d_{\xi}^{\beta} b(t,x,\xi)| \leq C_{\a,\beta} \widetilde{R}^{|\a|} 2^{|\beta|} \, \a!^{s} \beta! \, b(t,x,\xi)\,b(t,x,\xi)^{|\a|} \,\langle\xi\rangle^{-|\beta|}
	\ee

	\noindent for all $(t,x)$ in $[0,T]\times B_{r}(x_0)$ and $\xi$ in $\R$.
	We introduce the standard notation $s=1/\sigma$.
	And $\widetilde{R}$ satisfies
	\be
		\widetilde{R} = R(1 + |a|_{s,R} ) > R .
	\ee
\end{lemma}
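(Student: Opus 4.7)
My plan is to apply the Fa\`a di Bruno formula (Lemma \ref{4.lemma.faa}) to write $b$ as $g\circ f$ with $g(y) = y^{-1/2}$ and $f = a_\natural = a + \langle\xi\rangle^{-c}$. First I would bound the ``outer'' derivatives by $|g^{(k)}(f)|/k! = \tfrac{(2k-1)!!}{2^{k}k!}\, f^{-1/2-k} \leq b^{1+2k}$. The crucial simplification is that $a$ depends only on $(t,x)$ while $\langle\xi\rangle^{-c}$ depends only on $\xi$, so each factor $\partial_x^{\alpha_j}\partial_\xi^{\beta_j} f$ appearing in the Fa\`a di Bruno sum vanishes unless $\alpha_j=0$ or $\beta_j=0$. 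Each composition of $(\alpha,\beta)$ into $k$ nonzero parts thus splits into $k_1$ ``pure $x$'' factors (compositions of $\alpha$) and $k_2=k-k_1$ ``pure $\xi$'' factors (compositions of $\beta$), with a binomial $\binom{k}{k_1}$ accounting for the interleaving.

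For the $\xi$-factors I would use the standard analytic bound $|\partial_\xi^{\beta_j}\langle\xi\rangle^{-c}|/\beta_j! \lesssim 2^{|\beta_j|}\langle\xi\rangle^{-c-|\beta_j|}$. Taking the product over $k_2$ factors yields $\langle\xi\rangle^{-c k_2-|\beta|}$, and multiplying by the $b^{2k_2}$ piece from the outer chain rule gives $b^{2k_2}\langle\xi\rangle^{-ck_2-|\beta|}\leq \langle\xi\rangle^{-|\beta|}$ thanks to the upper bound \eqref{4.upper.bound.b}, $b\leq \langle\xi\rangle^{c/2}$. This produces the announced factor $\langle\xi\rangle^{-|\beta|}$ and the $2^{|\beta|}\beta!$ growth cleanly, once we sum compositions of $\beta$ and invoke the identity $\sum_{k_2}\binom{|\beta|-1}{k_2-1}=2^{|\beta|-1}$.

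For the $x$-factors I would dispatch on the order of each $\alpha_j$. If $|\alpha_j|=1$, Glaeser's inequality (Lemma \ref{4.lemma.glaeser.und.a}) gives $|\partial_x a| \leq C_G\, a^{1/2} \leq C_G (a+\langle\xi\rangle^{-c})^{1/2} = C_G\, b^{-1}$, so such a factor contributes $b^{-1}$. If $|\alpha_j|\geq 2$, Glaeser is unavailable and I fall back on the Gevrey hypothesis $|\partial_x^{\alpha_j} a|\leq C R^{|\alpha_j|}\alpha_j!^{s}$, which contributes no power of $b$. Denoting by $\ell$ the number of Glaeser-type factors, the total power of $b$ coming from the outer chain rule and the inner $x$-factors is $1+2k_1-\ell$. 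The critical arithmetic observation is
\[
    |\alpha| \;=\; \ell + \sum_{j\in I_2}|\alpha_j| \;\geq\; \ell + 2(k_1-\ell) \;=\; 2k_1-\ell,
\]
so $1+2k_1-\ell \leq 1+|\alpha|$. The excess $b^{-(|\alpha|-2k_1+\ell)}$ is then absorbed using the lower bound \eqref{4.lower.bound.b} on $b$, at the price of inflating $R$ into $\widetilde R = R(1+|a|_{s,R})$.

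The main obstacle I anticipate is the combinatorial bookkeeping, i.e.\ verifying that summing over $k$, over the split $k_1+k_2=k$, and over the $N(\alpha,k_1)N(\beta,k_2)$ compositions produces only a bounded sequence $C_{\alpha,\beta}$ times the announced growth $\widetilde R^{|\alpha|} 2^{|\beta|}\alpha!^{s}\beta!$. This uses the combinatorial identities recalled after Lemma \ref{4.lemma.faa} (in particular $\sum_{k_1}N(\alpha,k_1)\prod\alpha_j!^{s-1}\lesssim \alpha!^{s-1}\cdot 2^{|\alpha|}$ when $s\geq 1$, which compensates against $(\alpha_j!)^{s-1}$ factors), together with the $k!$-term in the Fa\`a di Bruno denominator that absorbs the binomial $\binom{k}{k_1}$. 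The case $k=-1$ in Lemma \ref{4.lemma.b.symbol} is then immediate by running the same argument with $g(y)=y^{1/2}$ instead, since Glaeser's inequality is the only non-algebraic ingredient and it is symmetric in powers of $a_\natural$.
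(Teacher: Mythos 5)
Your proposal follows essentially the same route as the paper's proof: you apply Fa\`a di Bruno to $a_\natural^{-1/2}$, exploit the vanishing of the mixed $\partial_x\partial_\xi$-derivatives of $a_\natural$ to decouple the $x$- and $\xi$-factors, invoke Glaeser's inequality (Lemma \ref{4.lemma.glaeser.und.a}) for first-order $x$-factors and the Gevrey bound on $a$ for the rest, use the counting inequality $\ell \geq 2k_1-|\alpha|$ together with $b\gtrsim 1$ (equivalently $a_\natural\lesssim 1$) to reach the exponent $b^{1+|\alpha|}$, and absorb $b^{2k_2}\langle\xi\rangle^{-ck_2}\leq 1$ via the upper bound \eqref{4.upper.bound.b}. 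The combinatorial constants (compositions $N(\alpha,k)$, $N(\beta,k')$ and the bound $|c_k|\leq k!$) are handled just as in the paper, so the argument is complete and not a different proof.
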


\begin{proof}[Proof of Lemma {\rm \ref{4.lemma.derivatives.b}}]

	By the Fa\`a di Bruno formula (Lemma \ref{4.lemma.faa}) on iterated derivatives of composition of functions, using the fact that $\d_x^{\a} \d_{\xi}^{\beta} a_{\natural} \equiv 0$ as soon as $|\a| >0$ and $|\beta| >0$, we deduce
	\begin{align*}
		& \frac{1}{\a!\beta!} \,\d_{x}^{\a}\d_{\xi}^{\beta} \left( a_{\natural}^{-1/2} \right) \\
		& =
		\sum_{1 \leq k \leq |\a| \atop 1 \leq k' \leq |\beta|} \frac{ c_{k} c_{k'} }{k! k'!} \,a_{\natural}^{-1/2-k-k'} \left(\sum_{\a_1 + \cdots + \a_k = \a \atop \a_j \geq 1} \prod_{j=1}^{k} \frac{1}{\a_j!} \d_{x}^{\a_j} a \right) \times \left(\sum_{\beta_1 + \cdots + \beta_{k'} = \beta \atop \beta_j \geq 1 } \prod_{j=1}^{k'} \frac{1}{\beta_j!} \d_{\xi}^{\beta_j} \langle \cdot \rangle^{-c} \right)
	\end{align*}

	\noindent where coefficients $c_{k+k'}$ are defined by $ \left( y^{-1/2}\right)^{(k)} = c_k y^{-1/2 - k} $.

	Next, there holds
	$$
		\left| \frac{1}{\beta_j!}\d_{\xi}^{\beta_j} \langle \xi \rangle^{-c} \right| \leq \langle \xi \rangle^{ -c - |\beta_j|}
	$$

	\noindent as $c \leq 2$, hence
	$$
		\left| \sum_{\beta_1 + \cdots + \beta_{k'} = \beta \atop \beta_j \geq 1} \prod_{j=1}^{k'} \frac{1}{\beta_j!} \d_{\xi}^{\beta_j} \langle \cdot \rangle^{-c} \right| \leq N(\beta, k') \, \langle \cdot \rangle^{-k'c - |\beta|}
	$$

	\noindent where we denote
	$$
		N(\beta, k') = \left| \big\{ (\beta_1 , \ldots , \beta_{k'}) \, |\, \beta_1 + \cdots + \beta_{k'} = \beta \, , \, \beta_j \geq 1 \big\} \right| .
	$$

	\noindent Thanks to the the bound \eqref{4.upper.bound.b}, there holds $ a_{\natural}^{-1} \leq \langle \cdot \rangle^{c}$, hence
	$$
		a_{\natural}^{-k'} \left| \sum_{\beta_1 + \cdots + \beta_{k'} = \beta \atop \beta_j \geq 1} \prod_{j=1}^{k'} \frac{1}{\beta_j!} \d_{\xi}^{\beta_j} \langle \cdot \rangle^{-c} \right| \leq N(\beta,k') \,\langle \cdot \rangle^{ - |\beta|} .
	$$

	\noindent We focus now on the sum
	$$
		\sum_{\a_1 + \cdots + \a_k = \a \atop \a_j \geq 1} \prod_{j=1}^{k} \frac{1}{\a_j!} \d_{x}^{\a_j} a.
	$$

	\noindent If $|\a_j| = 1$, we may use Lemma \ref{4.lemma.glaeser.und.a} to bound $\d_{x}^{\a_j}$.
	We introduce then
	$$
		I_1(\a_1, \ldots, \a_k) = \{ j : |\a_j| = 1 \}
	$$

	\noindent and there holds
	$$
		\forall \, j \in I_1 \;, \quad |\d_{x}^{\a_j} a| \leq \left( C_{T,r}^{1/2} \,a_{\natural}^{1/2} \right)^{\a_j}
	$$

	\noindent thus
	\begin{eqnarray*}
		\left| \sum_{\a_1 + \cdots + \a_k = \a \atop \a_j \geq 1} \prod_{j=1}^{k} \frac{1}{\a_j!} \d_{x}^{\a_j} a \right|
		& \leq &
		\sum_{\a_1 + \cdots + \a_k = \a \atop \a_j \geq 1} \,\prod_{j\in I_1} \frac{1}{\a_j!} |\d_{x}^{\a_j} a| \, \prod_{j \notin I_1} \frac{1}{\a_j!} |\d_{x}^{\a_j} a| \\
		& \leq &
		\sum_{\a_1 + \cdots + \a_k = \a \atop \a_j \geq 1} \,\prod_{j\in I_1} \frac{1}{\a_j!} \left( C_{T,r}^{1/2} \,a_{\natural}^{1/2} \right)^{\a_j} \, \prod_{j \notin I_1} \frac{1}{\a_j!} |\d_{x}^{\a_j} a| .
	\end{eqnarray*}

	\noindent For indices not in $I_1$, that is for $|\a_j| \geq 2$, we use the fact that $a$ is in $G^{s}_{R}$, hence
	\begin{eqnarray*}
		\left| \sum_{\a_1 + \cdots + \a_k = \a \atop \a_j \geq 1} \,\prod_{j=1}^{k} \frac{1}{\a_j!} \d_{x}^{\a_j} a \right|
		& \leq &
		\sum_{\a_1 + \cdots + \a_k = \a \atop \a_j \geq 1} \prod_{j\in I_1} \frac{1}{\a_j!} \left( C_{T,r}^{1/2} \,a_{\natural}^{1/2} \right)^{\a_j} \, \prod_{j \notin I_1} \frac{1}{\a_j!} |a|_{s,R} \, R^{|\a_j|} \a_j!^{s} .
	\end{eqnarray*}

	\noindent As the $k$-tuple $(\a_1,\ldots,\a_k)$ satisfies $\a_1 + \cdots + \a_k = \a$, there holds $ |\a_1| + \cdots + |\a_k| = |\a| $ hence
	\begin{eqnarray*}
		|I_1(\a_1, \ldots, \a_k)|
		& = &
		|\a| - \sum_{j \notin I_1} |\a_j| \\
		& \leq & |\a| - 2( k - |I_1|)
	\end{eqnarray*}

	\noindent which leads to $|I_1| \geq 2k - |\a|$. As $a_{\natural} \leq 1$, we get
	\begin{eqnarray*}
		\left| \sum_{\a_1 + \cdots + \a_k = \a \atop \a_j \geq 1} \,\prod_{j=1}^{k} \frac{1}{\a_j!} \d_{x}^{\a_j} a \right|
		& \leq &
		a_{\natural}^{k - |\a|/2} \, \sum_{\a_1 + \cdots + \a_k = \a \atop \a_j \geq 1} \prod_{j\in I_1} \frac{1}{\a_j!} C_{T,r}^{1/2} \, \prod_{j \notin I_1} \frac{1}{\a_j!} \a_j!^{s}|a|_{s,R} \, R^{|\a_j|}  .
	\end{eqnarray*}

	\noindent We need then to compare $C_{T,r}^{1/2}$ with $|a|_{s,R} R$.
	Up to shrinking $T$, we may assume that
	\be
		C_{T,r}^{1/2} \leq |a|_{s,R} R.
	\ee

	\noindent There holds
	\begin{eqnarray*}
		\left| \sum_{\a_1 + \cdots + \a_k = \a \atop \a_j \geq 1} \prod_{j=1}^{k} \frac{1}{\a_j!} \d_{x}^{\a_j} a \right|
		& \leq &
		a_{\natural}^{k - |\a|/2} |a|_{s,R}^{k} R^{|\a|} \, \sum_{\a_1 + \cdots + \a_k = \a \atop \a_j \geq 1} \prod_{j=1}^{k}\frac{1}{\a_j!} \a_j!^{s}  \\
		& \leq &
		a_{\natural}^{k - |\a|/2} \, \a!^{s-1}  |a|_{s,R}^{k} R^{|\a|} \, \sum_{\a_1 + \cdots + \a_k = \a \atop \a_j \geq 1} \prod_{j=1}^{k} \binom{\a}{\a_1 , \ldots , \a_k}^{1-s} .
	\end{eqnarray*}

	\noindent Denote
	$$
		C_{s}(\a,k) = \sum_{\a_1 + \cdots + \a_k = \a \atop \a_j \geq 1} \prod_{j=1}^{k} \binom{\a}{\a_1 , \ldots , \a_k}^{1-s} .
	$$

	\noindent As $\binom{\a}{\a_1 , \ldots , \a_k} \geq 1$ and $s \geq 1$, there holds
	$$
		C_{s}(\a,k)
		\leq
		N(\a,k) .
	$$

	We put altogether all the inequalities:
	\begin{align*}
		& \left| \frac{1}{\a!\beta!} \,\d_{x}^{\a}\d_{\xi}^{\beta} \left( a_{\natural}^{-1/2} \right) \right| \\
		& \leq
		\sum_{1 \leq k \leq |\a| \atop 1 \leq k' \leq |\beta|} \frac{ |c_{k}|\,| c_{k'}| }{k! k'!} \,a_{\natural}^{-1/2-k} \, a_{\natural}^{k - |\a|/2} \, \a!^{s-1}  |a|_{s,R}^{k} R^{|\a|} \, C_s(\a,k) \, N(\beta,k') \,\langle \cdot \rangle^{ - |\beta|} \\
		& \leq
		a_{\natural}^{-1/2-|\a|/2} \a!^{s-1} R^{|\a|} \,\langle \cdot \rangle^{ - |\beta|} \, \sum_{1 \leq k \leq |\a| \atop 1 \leq k' \leq |\beta|} \frac{ |c_{k}|\,| c_{k'}| }{k! k'!} \,N(\beta,k') |a|_{s,R}^{k}  \, C_s(\a,k) \\
		& \leq
		b(t,x,\xi) \, a_{\natural}^{-|\a|/2} \a!^{s-1} R^{|\a|} \,\langle \cdot \rangle^{ - |\beta|} \, \left( \sum_{1 \leq k \leq |\a| } \frac{ |c_{k}|  }{k! } |a|_{s,R}^{k}  \, N(\a,k) \right) \, \left( \sum_{1 \leq k' \leq |\beta|} \frac{ |c_{k'}| }{k'!} \,N(\beta,k') \right) .
	\end{align*}

	\noindent By definition of the $c_{k}$ there holds
	\begin{eqnarray*}
		c_{k}
		& = &
		\prod_{j=0}^{k-1} \left( -1/2 - j \right) \\
		& = &
		(-1/2)^{k} \prod_{j=0}^{k-1} \left( 2j+1 \right) \\
		& = &
		\left( \frac{-1}{2} \right)^{k} \frac{ (2k)! }{ \prod_{j=0}^{k-1} (2(j+1)) } \\
		& = &
		\left( \frac{-1}{4} \right)^{k} \frac{ (2k)! }{ k! } .
	\end{eqnarray*}

	\noindent In particular there holds $|c_k| \leq k! $ by Stirling's inequality.
	This implies that
	$$
		\sum_{1 \leq k \leq |\a| } \frac{ |c_{k}|  }{k! } |a|_{s,R}^{k}  \, N(\a,k) \leq (1+|a|_{s,R})^{|\a|}
	$$

	\noindent and
	$$
		\sum_{1 \leq k' \leq |\beta|} \frac{ |c_{k'}| }{k'!} \,N(\beta,k') \leq 2^{|\beta|} .
	$$

	\noindent Finally there holds
	\begin{align*}
		& \left| \frac{1}{\a!\beta!} \,\d_{x}^{\a}\d_{\xi}^{\beta} \left( a_{\natural}^{-1/2} \right) \right| \\
		& \leq
		b(t,x,\xi) \, b^{|\a|} \a!^{s-1} \left( R(1 + |a|_{s,R} ) \right)^{|\a|} 2^{|\beta|}\,\langle \cdot \rangle^{ - |\beta|}
	\end{align*}

	\noindent which suffices to end the proof.

\end{proof}

\medskip

We do not use Lemma \ref{4.lemma.glaeser.local} here, but include it since it may prove useful in further work on weakly hyperbolic systems.
We note that a local statement can be deduced from Lemma \ref{4.lemma.glaeser.global}, using a $C^{\infty}$ nonnegative function $\varphi$ with compact support $\overline{B}_{r}(x_0)$, and equals to $1$ in $\overline{B}_{r'}(x_0)$ for some $r' <r$.
We may then extend any locally defined, nonnegative function into a globally defined, nonnegative one.

We first introduce some notations.
For any domain $\mathcal{D} \subset B_r(x_0) $ and $j \in \N$, we denote
$$
	M_j(f ; \mathcal{D}) = \sup \left\{ |\d_x^{\a} f| \,:\, x \in \mathcal{D} \, , \,  |\a| = j \right\} .
$$

\noindent For any $0<r'<r$ we define
$$
	\mathcal{C}_{r',r}(x_0) = \left\{ x \in \R^{d} \, : \, r' < |x-x_0| < r\right\} .
$$
\begin{lemma}[Local Glaeser inequality]
	\label{4.lemma.glaeser.local}
	Let $f:B_{r}(x_0) \to \R$ be a nonnegative $C^2$ function.
	Then
	\be
		\label{4.ineq.glaeser.local}
		\forall x\in \overline{B}_{r'}(x_0) \;,\quad |\d_{x}f(x)|^2 \leq G(f;x_0,r',r) f(x)
	\ee

	\noindent for any $r'<r$.
	The local Glaeser's constant $G(f;x_0,r',r)$ is defined by
	\be
		\label{4.constant.glaeser.local}
		G(f;x_0,r',r) = 2M_{2}(f;B_{r}(x_0)) + \frac{4}{r-r'} M_{1} \left( f;\mathcal{C}_{r',r}(x_0) \right) + \frac{4}{(r-r')^2} M_{0} \left( f; \mathcal{C}_{r',r}(x_0) \right) .
	\ee
\end{lemma}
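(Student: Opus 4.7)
The strategy signposted in the excerpt is to reduce to the global Glaeser inequality (Lemma \ref{4.lemma.glaeser.global}) by cutoff. Concretely, the plan is to fix a nonnegative cutoff $\varphi \in C^{\infty}_c(B_r(x_0))$ with $\varphi \equiv 1$ on $\overline{B}_{r'}(x_0)$ and with standard derivative bounds
$$
|\partial_x \varphi|_{L^\infty} \leq \frac{c_1}{r-r'}, \qquad |\partial_x^2 \varphi|_{L^\infty} \leq \frac{c_2}{(r-r')^2},
$$
supported in the annulus $\mathcal{C}_{r',r}(x_0)$. One then sets $F = \varphi f$, extended by $0$ outside $B_r(x_0)$. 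Since $f \geq 0$ and $\varphi \geq 0$, the extension $F$ is a nonnegative $C^2$ function on all of $\R^n$, and $\partial_x^2 F$ is bounded on $\R^n$ (it is supported in $\overline{B_r(x_0)}$, on which $f$ is $C^2$).

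Applying the global Glaeser inequality \eqref{4.ineq.glaeser.global} to $F$ yields
$$
|\partial_x F(x)|^2 \leq 2 \,|\partial_x^2 F|_{L^\infty(\R^n)}\, F(x) \quad \forall x \in \R^n.
$$
Since $\varphi \equiv 1$ and $\partial_x \varphi \equiv 0$ on $\overline{B}_{r'}(x_0)$, on this set we have $F = f$ and $\partial_x F = \partial_x f$, so the inequality reduces to
$$
|\partial_x f(x)|^2 \leq 2 \,|\partial_x^2 F|_{L^\infty(\R^n)}\, f(x), \quad x \in \overline{B}_{r'}(x_0).
$$
It remains only to bound $|\partial_x^2 F|_{L^\infty(\R^n)}$, and this is the step where the three terms in the stated constant $G(f;x_0,r',r)$ must appear in the right proportions.

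By Leibniz's rule,
$$
\partial_x^2 F = (\partial_x^2 \varphi)\, f + 2\,\partial_x\varphi \otimes \partial_x f + \varphi\, \partial_x^2 f.
$$
On $\overline{B}_{r'}(x_0)$ the first two contributions vanish and the last is bounded by $M_2(f;B_r(x_0))$; on the annulus $\mathcal{C}_{r',r}(x_0)$ where $\partial_x\varphi,\partial_x^2\varphi$ live, one uses the cutoff derivative bounds together with $M_0,M_1,M_2$ on $\mathcal{C}_{r',r}(x_0)$; outside $B_r(x_0)$ the function $F$ vanishes identically. Combining these estimates produces an upper bound of the form
$$
|\partial_x^2 F|_{L^\infty(\R^n)} \leq M_2(f;B_r(x_0)) + \frac{2c_1}{r-r'} M_1(f;\mathcal{C}_{r',r}(x_0)) + \frac{c_2}{(r-r')^2} M_0(f;\mathcal{C}_{r',r}(x_0)).
$$
Plugging this into the previous display and absorbing the prefactor $2$ into the constants yields \eqref{4.ineq.glaeser.local} with $G$ of the form \eqref{4.constant.glaeser.local}.

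The only mildly delicate point, and the one I would expect to require care, is matching the explicit numerical constants $(2,\tfrac{4}{r-r'},\tfrac{4}{(r-r')^2})$ in \eqref{4.constant.glaeser.local}. This forces a specific choice of cutoff function with sharp derivative bounds (morally $c_1 \leq 1$ and $c_2 \leq 2$, which can be achieved by a radial cutoff built from a one-dimensional piecewise-polynomial profile); no new idea is needed beyond bookkeeping. All the analytic content is already carried by Lemma \ref{4.lemma.glaeser.global}.
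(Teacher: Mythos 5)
Your proof is correct and follows essentially the same route as the paper: apply the global Glaeser inequality to the cutoff product $\varphi f$, restrict to $\overline{B}_{r'}(x_0)$ where $\varphi\equiv 1$, and bound $M_2(f\varphi;\R^n)$ by Leibniz in terms of $M_0,M_1,M_2$ and the cutoff's derivative bounds. The delicacy you flag at the end is also present in the paper's own argument, which derives the lower bounds $M_1(\varphi)\geq \tfrac{1}{r-r'}$ and $M_2(\varphi)\geq \tfrac{2}{(r-r')^2}$ and then asserts one should pick $\varphi$ attaining them as equalities.
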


\begin{proof}
	Let $\varphi$ be a $C^{\infty}$ function with compact support $\overline{B}_{r}(x_0)$, satisfying also $ 0 \leq \varphi \leq 1$ and $\varphi(x) = 1$ for all $x\in\overline{B}_{r'}(x_0)$.
	Then the function $f\varphi$ satisfies the conditions for applying Lemma \ref{4.lemma.glaeser.global}.
	Hence \eqref{4.ineq.glaeser.global} leads to
	$$
		|(f\varphi)'(x)|^2 \leq 2 M_{2}(f\varphi;\R^{n}) f(x)\varphi(x)
	$$

	\noindent for all $x$ in $\R^{n}$.
	As $\varphi$ is identically one in $\overline{B}_{r'}(x_0)$, there holds
	$$
		|f'(x)|^2 \leq 2 M_{2}(f\varphi;\R^{n}) f(x)
	$$

	\noindent for all $x\in\overline{B}_{r'}(x_0)$.

	To end the proof we have to give an upper bound of $M_{2}(f\varphi;\R^{n})$, with respect to the distance $r-r'$.
	First there holds
	$$
		M_{2}(f\varphi;\R^{n}) \leq M_{2}(f;B_{r}(x_0)) + 2M_{1} \left( f; \mathcal{C}_{r',r}(x_0) \right) M_{1} (\varphi;\R^{n}) + M_{0} \left( f; \mathcal{C}_{r',r}(x_0) \right) M_{2}(\varphi;\R^{n}).
	$$

	\noindent Second, for any $x_{r'}$ such that $|x_{r'}| = r'$ we denote
	$$
		x_{r} = x_{0} + \frac{r}{r'}(x_{r'} - x_0)
	$$

	\noindent the only point of $\overline{B}_{r}(x_0)$ such that $|x_{r}-x_0| = r$ and $x_{r'}$ is in the interval $[x_0,x_{r}]$.
	By the mean value theorem there is $s\in[0,1]$ such that
	$$
		\varphi(x_{r}) - \varphi(x_{r'}) = (x_{r} - x_{r'}) \cdot \d_x \varphi\left( x_{0} + s \frac{r}{r'}(x_{r'} - x_0) \right)
	$$

	\noindent thus
	$$
		\left|\d_x \varphi\left( x_{0} + s \frac{r}{r'}(x_{r'} - x_0) \right)\right| = \frac{1}{r-r'}
	$$

	\noindent as $\varphi(x_{r}) = 0$, $\varphi(x_{r'}) = 1$ and $|x_{r} - x_{r'}| = r-r' $ and then
	$$
		M_{1}(\varphi) \geq \frac{1}{r-r'}.
	$$

	\noindent By the same way we can prove also that
	$$
		M_{2}(\varphi) \geq \frac{2}{(r-r')^2}.
	$$

	To end the proof, it suffices to construct $\varphi$ such that  the previous lower bound are equalities.
\end{proof}

\begin{remark}
	In the estimate \eqref{4.constant.glaeser.local} appears the distance $r-r'$.
	In the worst case, it is the distance between the neighborhood of $x_0$ such that the Glaeser inequality holds, and the possible point $\tilde{x}$ such that $f(\tilde{x}) = 0$ and $\d_{x} f(\tilde{x}) \neq 0$, at which Glaser inequality fails.

	For example, let take $f(x) = x$ in $[0,+\infty[$.
	Then $f'(x) = 1$ and there holds, for any $x_0>0$:
	$$
		\forall \,x\in[x_0,x_0+1] \;,\quad (f'(x))^2 \leq C(x_0) f(x) 
	$$

	\noindent with $C(x_0) = 1/x_0$.
	By comparison, the constant $G(f;r',r)$ of the previous Lemma verifies
	\begin{eqnarray*}
		G(f;r',r)
		& \leq &
		M_{2}(f;[x_0,x_0+1]) + \frac{2}{x_0} M_1(f;[0,x_0]) + \frac{2}{x_0^2} M_{0}(f;[0,x_0]) \\
		& \lesssim &
		\frac{1}{x_0}
	\end{eqnarray*}

	\noindent as $M_{0}(f;[0,x_0]) \leq x_0$.
\end{remark}


\subsection{Metrics in the phase space and pseudo-differential calculus}
\label{4.subsection.metrics}


We refer to the Chapter 2 of \cite{lerner2011metrics} for the basic definitions and expected properties of metrics in the phase space and associated symbols.
As we wish this paper to be self-contained, we give the few needed definitions, and prove that our metric $g$ and symbol $b$ satisfy them.
Hence we may use all properties of pseudo-differential calculus with symbols in $S(b,g)$ and other related classes.

\begin{lemma}[Admissibility of the metric]
	\label{lemma.admissibility.proof}
	The metric $g$ defined by \eqref{4.def.metric} is admissible, that is:
	\begin{enumerate}

		\item The metric $g$ is slowly varying (see Definition $2.2.1$ in {\rm \cite{lerner2011metrics}}), as there are $C>0$ and $r>0$ such that for all $X, Y, T \in \R\times\R$ there holds
		\be
			\label{slowly.ineq}
			g_X(X-Y) \leq r^2 \implies C^{-1} g_Y(T) \leq g_X(T) \leq C g_Y(T) .
		\ee

		\item The metric $g$ satisfies the uncertainty principle (see Section $2.2.3$ and specifically $2.2.12$ in {\rm \cite{lerner2011metrics}}), that is
		\be
			\label{uncertainty}
			\lambda(t,x,\xi) \geq 1 \quad , \quad \forall \, t,x,\xi .
		\ee
		\noindent where $\lambda$ is defined by \eqref{4.def.lambda}.

		\item The metric $g$ is temperate (see Lemma $2.2.14$ in {\rm \cite{lerner2011metrics}}), that is there are $C>0$ and $N >0$ such that
		\be
			\label{temperate.ineq}
			\frac{g_X(T)}{g_Y(T)} \leq C \left(	1 + g_X^{\sigma} (X-Y)\right)^{N} \quad , \quad \forall \, X,Y,T \text{ in } \R\times\R .
		\ee

		\noindent The metric $g_X^{\sigma}$ is defined by
		$$
			g_X^{\sigma}(Y) =  \langle X_2 \rangle^2|Y_1|^2 + a_{\natural}(t,X)|Y_2|^2 .
		$$
	\end{enumerate}
\end{lemma}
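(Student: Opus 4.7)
The plan is to verify the three items in turn. Throughout, I write $X=(x_1,x_2)$ for a point in $\R\times\R$ (so $x_2$ plays the role of $\xi$) and $T=(T_1,T_2)$ for a tangent vector; the diagonal structure of $g$ makes it natural to compare the $dx$-direction and the $d\xi$-direction separately. The central analytical input is Lemma \ref{4.lemma.glaeser.und.a}, which gives $|\partial_x a|\lesssim a_\natural^{1/2}$, combined with the Taylor expansion of $a$ in the spatial variable (using that $a$ is $C^2$ on the compact set of Assumption \ref{4.hypo.A}, with uniformly bounded second derivative).

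For \textbf{slow variation}, fix $X$ and $Y$ with $g_X(X-Y)\leq r^2$; this means $|Y_1-X_1|^2\leq r^2 a_\natural(X)$ and $|Y_2-X_2|^2\leq r^2 \langle X_2\rangle^2$. I would first compare the $\xi$-parts: the inequality on $|Y_2-X_2|$ is the standard one making $\langle\xi\rangle$ slowly varying, so $\langle Y_2\rangle\sim\langle X_2\rangle$ and hence $\langle Y_2\rangle^{-c}\sim\langle X_2\rangle^{-c}$. For the $a$-part, apply Taylor's formula at $X_1$:
\begin{equation*}
|a(Y_1)-a(X_1)|\leq |\partial_x a(X_1)|\,|Y_1-X_1| + C|Y_1-X_1|^2.
\end{equation*}
Using Glaeser (Lemma \ref{4.lemma.glaeser.und.a}) gives $|\partial_x a(X_1)|\leq C a_\natural(X)^{1/2}$, so the right-hand side is bounded by $Cr(1+r)\,a_\natural(X)$. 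For $r$ small enough, one concludes $a_\natural(Y)\in[\tfrac12,\tfrac32]a_\natural(X)$, which together with the $\xi$-comparison yields \eqref{slowly.ineq}.

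For the \textbf{uncertainty principle}, just unpack the definition: $\lambda(t,x,\xi)=b^{-1}\langle\xi\rangle=a_\natural^{1/2}\langle\xi\rangle\geq \langle\xi\rangle^{-c/2}\langle\xi\rangle=\langle\xi\rangle^{1-c/2}\geq 1$ since $c\in(0,2]$. This is a one-line verification.

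For the \textbf{temperate} condition, because $g$ is diagonal, it suffices to control the two ratios $a_\natural(Y)/a_\natural(X)$ and $\langle Y_2\rangle^2/\langle X_2\rangle^2$ (and their reciprocals) by a polynomial in $g^\sigma_X(X-Y)=\langle X_2\rangle^2|Y_1-X_1|^2+a_\natural(X)|Y_2-X_2|^2$. The $\xi$-ratio is handled by the classical inequality $\langle Y_2\rangle\leq\langle X_2\rangle(1+|Y_2-X_2|)\leq\langle X_2\rangle(1+a_\natural(X)^{-1/2}g^\sigma_X(X-Y)^{1/2})$, and since $a_\natural(X)^{-1/2}\leq\langle X_2\rangle^{c/2}$ one gets a polynomial bound. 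For the $a$-ratio, reuse Taylor plus Glaeser:
\begin{equation*}
\frac{a_\natural(Y)}{a_\natural(X)} \leq 1 + C\frac{|Y_1-X_1|}{a_\natural(X)^{1/2}} + C\frac{|Y_1-X_1|^2}{a_\natural(X)} + \frac{|\langle Y_2\rangle^{-c}-\langle X_2\rangle^{-c}|}{a_\natural(X)}.
\end{equation*}
Writing $|Y_1-X_1|=\langle X_2\rangle^{-1}\bigl(\langle X_2\rangle^2|Y_1-X_1|^2\bigr)^{1/2}\leq\langle X_2\rangle^{-1}g^\sigma_X(X-Y)^{1/2}$ and using the uncertainty bound $\langle X_2\rangle a_\natural(X)^{1/2}=\lambda(X)\geq 1$ converts each term into a polynomial in $g^\sigma_X(X-Y)$. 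The last, $\xi$-correction term is absorbed similarly via the mean value theorem applied to $\langle\cdot\rangle^{-c}$. The reverse ratio $a_\natural(X)/a_\natural(Y)$ follows by combining the same estimate with slow variation (localizing along a chain of points from $X$ to $Y$ of $g$-diameter $\lesssim r$, as in Lerner's treatment) so that the polynomial control propagates.

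The main obstacle I expect is the bookkeeping in the temperate estimate: one must consistently convert spatial increments $|Y_1-X_1|$ into $g^\sigma_X$-quantities using the weight $\langle X_2\rangle^{-1}$, and then use $\lambda\geq 1$ to trade factors of $a_\natural(X)^{-1/2}$ for factors of $\langle X_2\rangle$ so that the exponent of $\langle X_2\rangle$ in the final bound is finite. The Glaeser inequality is what makes this bookkeeping close --- without the improved bound $|\partial_x a|\lesssim a_\natural^{1/2}$, the first-order Taylor term would only give $a_\natural(X)^{-1}|Y_1-X_1|$, which cannot be absorbed.
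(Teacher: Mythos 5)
Your items 1 and 2 match the paper's argument in substance: for slow variation you use the same Taylor-plus-Glaeser mechanism (the paper expands at $Y_1$ rather than $X_1$ and closes with Young's inequality, a cosmetic difference), and your one-line pointwise verification of $\lambda = a_\natural^{1/2}\langle\xi\rangle \geq \langle\xi\rangle^{1-c/2}\geq 1$ is in fact cleaner than the paper's formulation.

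The temperate step, however, is where your sketch does not close. Your stated $\xi$-comparison $\langle Y_2\rangle\leq\langle X_2\rangle(1+|Y_2-X_2|)$ followed by $a_\natural(X)^{-1/2}\leq\langle X_2\rangle^{c/2}$ leaves a factor $\langle X_2\rangle^{c/2}$ that does not disappear; similarly, the $\langle\cdot\rangle^{-c}$-correction inside the ratio $a_\natural(Y)/a_\natural(X)$ produces a stray $\langle X_2\rangle^{c/2}$ via $a_\natural(X)^{-1/2}$. Your closing remark that the goal is to make "the exponent of $\langle X_2\rangle$ in the final bound finite" is a misreading of temperateness: the inequality $g_X(T)/g_Y(T)\leq C(1+g_X^\sigma(X-Y))^N$ must hold with $C,N$ uniform in $X,Y$, so a residual positive power of $\langle X_2\rangle$ is fatal (take $g_X^\sigma(X-Y)\sim 1$ and $\langle X_2\rangle\to\infty$). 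The paper removes these stray factors with an explicit dichotomy: either $g_X(X-Y)\leq r^2$, in which case slow variation gives the bound outright, or $g_X(X-Y)>r^2$, in which case $g_X^\sigma(X-Y)=a_\natural(X)\langle X_2\rangle^2 g_X(X-Y)\geq r^2\lambda(X)^2\geq r^2\langle X_2\rangle^{2-c}$, and any leftover $\langle X_2\rangle$-power can be traded for a power of $g_X^\sigma(X-Y)$. You gesture at a chain argument for the reverse ratio, but the dichotomy is both simpler and necessary already for the forward ratio; without it (or an equivalent device, and with the sharper bound $\langle Y_2\rangle/\langle X_2\rangle\leq 1+|Y_2-X_2|/\langle X_2\rangle\leq 1+\lambda(X)^{-1}g_X^\sigma(X-Y)^{1/2}$ rather than the multiplicative Peetre form) the temperate estimate is not established.
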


\begin{proof}
	We follow here partially the proof of Lemma 3.1 in \cite{colombini2007second}.
	We remind that $g_X(Y)$ reads
	$$
		g_X(Y) = \frac{|Y_1|^2}{a_{\natural}(t,X)} + \frac{|Y_2|^2}{\langle X_2 \rangle^2}
	$$

	\noindent with $X = (X_1,X_2)$ (the first component has to be seen as $x\in\R$ and the second component as $\xi\in\R$).

	$1.$ Assume that there holds
	\be
		\label{local.slowly.1}
		\frac{|X_2 - Y_2|^2}{\langle X_2 \rangle^2} \leq r_2^2
	\ee

	\noindent for some $r_2>0$.
	This implies in particular that
	\begin{eqnarray*}
		|Y_2|^2
		& \leq &
		2|X_2|^2 + 2|X_2 - Y_2|^2  \\
		& \leq &
		2(1 + r_2^2)|X_2|^2
	\end{eqnarray*}

	\noindent hence $\langle Y_2 \rangle \leq \sqrt{2}(1 + r_2^2)^{1/2}\langle X_2 \rangle$.
	The same way we prove
	$$
		|X_2|^2 \leq 2r_2^2|X_2|^2 + 2|Y_2|^2
	$$

	\noindent which leads to $(1 -2 r_2^2)^{1/2} \langle X_2 \rangle \leq \sqrt{2}\langle Y_2 \rangle$ as soon as $r_2 < 1/\sqrt{2}$.
	We have just proved the following
	$$
		\frac{|X_2 - Y_2|^2}{\langle X_2 \rangle^2} \leq r_2^2 \implies C_2^{-1} \frac{|T_2|^2}{\langle Y_2 \rangle^2} \leq \frac{|T_2|^2}{\langle X_2 \rangle^2} \leq C_2 \frac{|T_2|^2}{\langle Y_2 \rangle^2} \quad , \quad \forall \,T_2 \in \R
	$$

	\noindent with $r_2 <1/\sqrt{2}$ and $C_2 >0$ depending only on $r_2$.

	Next, we consider the part $\frac{|Y_1|^2}{a_{\natural}(t,X)}$ of the metric $g$.
	Assume that there holds
	\be
		\label{local.slowly.2}
		\frac{|X_1 - Y_1|^2}{a_{\natural}(t,X)} \leq r_1^2
	\ee

	\noindent for some $X$ and $Y$ in $\R\times\R$ and $r_1 >0$, and that \eqref{local.slowly.1} still holds with $r_2 <1/\sqrt{2}$.
	We aim to compare $a_{\natural}(t,X)$	and $a_{\natural}(t,Y)$.
	As $a(t,x)$ is smooth with respect to $x$, there holds
	$$
		a(t,X_1) = a(t,Y_1) + (X_1 - Y_1)\d_xa(t,Y_1) + R
	$$

	\noindent with $|R| \leq |X_1 - Y_1|^2 \sup \left| \d_x^2 a(t,x) \right| $.
	Note that, as $a$ has compact support in $x$, that $\sup \left| \d_x^2 a(t,x) \right| $ is independent of $X_1$ and $Y_1$.
	Then, using \eqref{local.slowly.2} we get
	$$
		|R| \leq C_2' a_{\natural}(t,X) r_1^2
	$$

	\noindent and
	$$
		\left| (X_1 - Y_1)\d_xa(t,Y_1) \right| \leq C_2'' a_{\natural}(t,X)^{1/2} r_1 a_{\natural}(t,Y)^{1/2}
	$$

	\noindent thanks to Lemma \ref{4.lemma.glaeser.und.a}.
	Note that neither $C_2'$ nor $C_2''$ depend on $X$ or $Y$.
	Then, by positivity of $a$, there holds
	\begin{eqnarray*}
		a(t,X_1)
		& \leq &
		a_{\natural}(t,Y) + C_2'' a_{\natural}(t,X)^{1/2} r_1 a_{\natural}(t,Y)^{1/2} + C_2' a_{\natural}(t,X) r_1^2 \\
		& \leq &
		2 a_{\natural}(t,Y) + \left(C_2' + C_2''^2/4 \right) r_1^2 a_{\natural}(t,X) .
	\end{eqnarray*}

	\noindent As soon as $r_1$ satisfies $C_2' + C_2''^2/4 < r_1^{-2}$, we get
	$$
		a_{\natural}(t,X) \leq C_2 a_{\natural}(t,Y)
	$$

	\noindent which suffices to prove finally \eqref{slowly.ineq}.

	\medskip

	2. The uncertainty principle for general metrics in the phase space reads in our case \eqref{uncertainty}.
	As the inequality should hold for all times, in particular for $t=0$ this leads to
	$$
		\lambda(0,x,\xi) = \langle \xi \rangle^{-c/2} \langle \xi \rangle \geq 1 \quad , \quad \forall \, \xi \in \R
	$$

	\noindent hence $c \leq 2$.
	As $\lambda$ is decreasing as time goes by, this inequality is sufficient to ensure \eqref{uncertainty} for all times.

	\medskip

	3. As $g$ is slowly varying, inequality \eqref{temperate.ineq} is satisfied if $g_X(X-Y) \leq r^2$.
	Assume then that $g_X(X-Y) > r^2$.
	As we note that $g^{\sigma}_X(X-Y) = a_{\natural}(t,X) \langle X_2 \rangle^2 g_X(X-Y) $, this implies that
	\begin{eqnarray}
		g^{\sigma}_X(X-Y)
		& \geq &
		a_{\natural}(t,X) \langle X_2 \rangle^2 r^2 \nonumber \\
		& \geq &
		\langle X_2 \rangle^{2-c} r^2 \label{local.gsigma.1}
	\end{eqnarray}

	\noindent by nonnegativity of $a$.

	For all $T\in\R\times\R$, there holds
	$$
		\frac{g_X(T)}{g_Y(T)} \leq \max \left( \frac{a_{\natural}(t,Y)}{a_{\natural}(t,X)} \, , \, \frac{\langle Y_2 \rangle^2}{\langle X_2 \rangle^2} \right) .
	$$

	\noindent To prove \eqref{temperate.ineq}, it suffices then to prove that
	\be
		\label{local.temp.ineq.1}
		\frac{a_{\natural}(t,Y)}{a_{\natural}(t,X)} \leq C' \left( 1 + g_X^{\sigma} (X-Y)\right)^{N'}
	\ee

	\noindent for some $C' >0$, $N'>0$ and
	\be
		\label{local.temp.ineq.2}
		\frac{\langle Y_2 \rangle^2}{\langle X_2 \rangle^2} \leq C'' \left( 1 + g_X^{\sigma} (X-Y)\right)^{N''}
	\ee

	\noindent for some $C'' >0$, $N''>0$.

	Consider first the former inequality.
	We proceed as for the point $1.$, proving there is a constant $B>0$ such that
	$$
		a(t,Y_1) = a(t,X_1) + B\left( a(t,X_1) + |X_1 - Y_1|^2 \right).
	$$

	\noindent Here, the control of $|X_1 - Y_1|^2$ is done by $g_X^{\sigma} (X-Y)$, as there holds
	\begin{eqnarray*}
		|X_1 - Y_1|^2
		& \leq &
		g_X^{\sigma} (X-Y) \\
		& \leq &
		C \langle X_2 \rangle^{-c} g_X^{\sigma} (X-Y)^{1+ c/(2-c)}
	\end{eqnarray*}

	\noindent thanks to \eqref{local.gsigma.1}.
	Then there is $B'>0$ such that
	$$
		a(t,Y_1) \leq B' a_{\natural}(t,X) \left( 1 + g_X^{\sigma} (X-Y) \right)^{1+ c/(2-c)}.
	$$

	\noindent As there holds $\langle Y_2 \rangle^{-c} \leq \langle Y_2 \rangle^{-c}(1 + |X_2 - Y_2|)^{c}$, we get
	\begin{eqnarray*}
		\langle Y_2 \rangle^{-c}
		&\leq &
		\langle Y_2 \rangle^{-c}(1 + |X_2 - Y_2|)^{c} \\
		&\leq &
		\langle Y_2 \rangle^{-c}\left( 1 + g_X^{\sigma} (X-Y)^{1/2+ c/2(2-c)} \right)^{c}
	\end{eqnarray*}

	\noindent hence
	$$
		a_{\natural}(t,Y) \leq C' a_{\natural}(t,X) \left( 1 + g_X^{\sigma} (X-Y) \right)^{N'} .
	$$

	\noindent for some $C'>0$ and $N'>0$.

	We may prove the second inequality \eqref{local.temp.ineq.2} by the same way, and end the proof.
\end{proof}

\begin{lemma}[Admissibility of the symbol $b$]
	The symbol $b$ defined by \eqref{4.def.b} is an admissible weight for the metric $g$, that is there are $C>0$ and $N >0$ such that
		\be
			\label{admissible.weight.ineq}
			\frac{b(t,X)}{b(t,Y)} \leq C \left(	1 + g_X^{\sigma} (X-Y)\right)^{N} \quad , \quad \forall \, X,Y \text{ in } \R\times\R .
		\ee
\end{lemma}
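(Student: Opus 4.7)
The plan is to reduce the admissibility inequality for $b$ to the temperateness inequality for the metric $g$, which has just been established. Indeed, since $b = a_\natural^{-1/2}$, we have
$$
\frac{b(t,X)}{b(t,Y)} = \left( \frac{a_\natural(t,Y)}{a_\natural(t,X)} \right)^{1/2},
$$
so it is enough to bound the ratio $a_\natural(t,Y)/a_\natural(t,X)$ by $C(1+g_X^\sigma(X-Y))^{2N}$ for some $C,N>0$. But this is precisely the content of \eqref{local.temp.ineq.1}, which was established inside the proof of temperateness of $g$ in Lemma \ref{lemma.admissibility.proof}. Taking the square root then yields \eqref{admissible.weight.ineq}.

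If one wishes to give a self-contained argument rather than referring back to the previous lemma, I would split into two cases depending on whether $g_X(X-Y)$ is small or large. In the small case $g_X(X-Y) \leq r^2$, the slowly varying property \eqref{slowly.ineq} already gives $a_\natural(t,X) \sim a_\natural(t,Y)$, hence a bounded ratio. In the large case $g_X(X-Y) > r^2$, I would write the second-order Taylor expansion
$$
a(t,Y_1) = a(t,X_1) + (Y_1-X_1)\partial_x a(t,X_1) + O(|X_1 - Y_1|^2),
$$
control the first-order term via the Glaeser inequality of Lemma \ref{4.lemma.glaeser.und.a}, namely $|\partial_x a(t,X_1)| \lesssim a_\natural(t,X_1)^{1/2}$, and use Young's inequality to absorb it into $a_\natural(t,X) + |X_1-Y_1|^2$. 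The remainder $|X_1-Y_1|^2$ is then controlled by $g_X^\sigma(X-Y)$ up to a factor $\langle X_2 \rangle^{-c}$, using the definition $g_X^\sigma(Y) = \langle X_2 \rangle^2 |Y_1|^2 + a_\natural(t,X)|Y_2|^2$ together with the lower bound $a_\natural(t,X) \geq \langle X_2 \rangle^{-c}$. For the second piece of $a_\natural$, one bounds $\langle Y_2 \rangle^{-c} \leq \langle X_2 \rangle^{-c} (1 + |X_2-Y_2|)^c$ and inserts the same polynomial control of $|X_2-Y_2|$ by $g_X^\sigma(X-Y)^{1/2}$.

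The one step where care is required is the exponent $N$: since $g_X^\sigma$ involves the weight $\langle X_2 \rangle^2$ in the first coordinate whereas $g_X$ involves $a_\natural(t,X)$, one loses a factor of the order $\langle X_2 \rangle^{c}$, which is polynomial in $g_X^\sigma(X-Y)$ precisely because $c \leq 2$ guarantees the uncertainty principle \eqref{uncertainty}. This is exactly the place where the constraint $c \leq 2$ reappears, mirroring its role in the proof of temperateness. Apart from this bookkeeping, every estimate in the argument is a direct consequence of Glaeser's inequality and Taylor expansion, so no genuinely new difficulty arises beyond what was handled for $g$ itself.
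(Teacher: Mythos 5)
Your first argument is correct and is in fact more elementary than what the paper does. You observe that $b = a_\natural^{-1/2}$ gives
\[
\frac{b(t,X)}{b(t,Y)} = \left(\frac{a_\natural(t,Y)}{a_\natural(t,X)}\right)^{1/2},
\]
reduce to inequality \eqref{local.temp.ineq.1}, and take a square root. The paper instead phrases the same reduction differently: it notes that \eqref{local.temp.ineq.1} is exactly the statement that $a_\natural$ is an admissible weight for $g$, and then invokes a black-box result from Lerner's book (Lemma $2.2.22$ therein, applied with $f(t) = t^{-1/2}$, together with $a_\natural \in S(a_\natural,g)$) to conclude that $a_\natural^{-1/2}$ is again admissible. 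That lemma is a more heavyweight device -- in one shot it also yields $b \in S(b,g)$, which is why the paper leans on it -- but for the admissibility-of-the-weight statement alone, your direct square-root step is both shorter and fully rigorous.

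The ``self-contained'' alternative you sketch is, as you seem aware, essentially a re-derivation of point $3$ (temperateness) of Lemma \ref{lemma.admissibility.proof}: small/large dichotomy on $g_X(X-Y)$, Taylor expansion plus Glaeser for the $a$-part of $a_\natural$, the bound $\langle Y_2\rangle^{-c} \lesssim \langle X_2\rangle^{-c}(1+|X_2-Y_2|)^c$ for the $\langle\cdot\rangle^{-c}$ part, and finally the bookkeeping point that the discrepancy between $g_X$ and $g^\sigma_X$ introduces a factor $\langle X_2\rangle^c$, which is polynomially controlled by $g^\sigma_X(X-Y)$ precisely because $c\le 2$, i.e.\ \eqref{uncertainty}. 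This is correct and matches the paper's argument for $g$; there is nothing to repair here, though it would be cleaner simply to cite \eqref{local.temp.ineq.1} directly rather than re-prove it, as you do in your first paragraph.
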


\begin{proof}
	In the course of the previous Lemma, we prove inequality \eqref{local.temp.ineq.1}.
	In view of the definition of an admissible weight, this means exactly that $a_{\natural}$ is an admissible weight for $g$.
	Lemma $2.2.22$ with $f(t) = t^{-1/2}$ and the fact that $a_{\natural} \in S(a_{\natural},g)$ implies then that $b = a_{\natural}^{-1/2}$ is also an admissible weight.

\end{proof}

For an admissible weight $M$ on $\R^{d}\times\R^{d}$, we introduce the classes of symbols $S(M,g)$ associated to the metric $g$:
\begin{defi}[Definition of classes of symbols]
	\label{4.defi.class.symbols}
	The space of symbols $S(M,g)$ is defined as the set of $C^{\infty}$ functions $f(t,\cdot)$ on $\R^{d}\times\R^{d}$ such that, for all $(\a,\beta)$ in $\N^d\times\N^d$, there is $C_{\a,\beta} >0$ such that
	$$
		\left| \d_{x}^{\a}\d_{\xi}^{\beta} f(t,x,\xi) \right| \leq C_{\a,\beta} M(x,\xi) \, b(t,x,\xi)^{|\a|} \langle \xi \rangle^{-|\beta|}
	$$

	\noindent uniformly in $(t,x,\xi)$.
\end{defi}


%
%
%
%
%
%



	\bibliographystyle{alpha}
	\bibliography{onset_insta}

\begin{thebibliography}{LMX10}

\bibitem[CJS83]{colombini1983well}
F.~Colombini, E.~Jannelli, and S.~Spagnolo.
\newblock Well-posedness in the {G}evrey classes of the {C}auchy problem for a
  nonstrictly hyperbolic equation with coefficients depending on time.
\newblock {\em Ann. Scuola Norm. Sup. Pisa Cl. Sci. (4)}, 10(2):291--312, 1983.

\bibitem[CM17]{colombini_metivier}
F.~Colombini and G.~M{\'e}tivier.
\newblock The {C}auchy problem for weakly hyperbolic systems.
\newblock {\em Communications in {P}artial {D}ifferential {E}quations},
  0(0):1--22, 2017.

\bibitem[CN07]{colombini2007second}
F.~Colombini and T.~Nishitani.
\newblock Second order weakly hyperbolic operators with coefficients sum of
  powers of functions.
\newblock {\em Osaka J. Math.}, 44(1):121--137, 2007.

\bibitem[CN17]{colombini2017}
F.~Colombini and T.~Nishitani.
\newblock On the {C}auchy problem for ${D}^2_t - {D}_x(b(t)a(x)){D}_x$.
\newblock {\em arXiv preprint arXiv:1712.05253}, 2017.

\bibitem[CNR]{bronshtein}
F.~Colombini, T.~Nishitani, and J.~Rauch.
\newblock Weakly hyperbolic systems by symmetrization.
\newblock {\em eprint arXiv:1508.03945v2}.

\bibitem[CR10]{colombini2010sharp}
F.~Colombini and J.~Rauch.
\newblock Sharp finite speed for hyperbolic problems well posed in {G}evrey
  classes.
\newblock {\em Communications in Partial Differential Equations}, 36(1):1--9,
  2010.

\bibitem[GJR18]{garetto2018hyperbolic}
C.~Garetto, C.~J\"ah, and M.~Ruzhansky.
\newblock Hyperbolic systems with non-diagonalisable principal part and
  variable multiplicities, i. well-posedness.
\newblock {\em arXiv preprint arXiv:1801.03573}, 2018.

\bibitem[Gla63]{glaeser1963racine}
G.~Glaeser.
\newblock Racine carr\'ee d'une fonction diff\'erentiable.
\newblock volume~13, pages 203--210, 1963.

\bibitem[Ler11]{lerner2011metrics}
N.~Lerner.
\newblock {\em Metrics on the phase space and non-selfadjoint
  pseudo-differential operators}, volume~3.
\newblock Springer Science \& Business Media, 2011.

\bibitem[LMX10]{lerner2010instability}
N.~Lerner, Y.~Morimoto, and C-J. Xu.
\newblock Instability of the {C}auchy-{K}ovalevskaya solution for a class of
  nonlinear systems.
\newblock {\em American journal of mathematics}, 132(1):99--123, 2010.

\bibitem[LNT17]{lerner2015onset}
N.s Lerner, T.~Nguyen, and B.~Texier.
\newblock The onset of instability in first-order systems.
\newblock {\em Journal of the European Mathematical Society (to appear)}, 2017.

\bibitem[Lu16]{lu2016resonances}
Y.~Lu.
\newblock Higher-order resonances and instability of high-frequency {WKB}
  solutions.
\newblock {\em Journal of {D}ifferential {E}quations}, 260(3):2296--2353, 2016.

\bibitem[M{\'e}t05]{metivier2005remarks}
G.~M{\'e}tivier.
\newblock Remarks on the well-posedness of the nonlinear {C}auchy problem.
\newblock {\em Contemporary Mathematics}, 368:337--356, 2005.

\bibitem[Mor17]{morisse2016j}
B.~Morisse.
\newblock On the action of pseudo-differential operators in {G}evrey spaces.
\newblock {\em arXiv preprint arXiv:1709.02591}, 2017.

\bibitem[Mor18]{morisse2016II}
B.~Morisse.
\newblock On hyperbolicity and {G}evrey well-posedness. {P}art two: Scalar or
  degenerate transitions.
\newblock {\em Journal of Differential Equations}, 264(8):5221--5262, 2018.

\end{thebibliography}

\end{document}